\documentclass[11pt]{article}
\usepackage{amsmath,amssymb,amsthm}
\usepackage{graphicx}
\usepackage{array}   
\newcolumntype{C}{>{$}c<{$}} 

\usepackage{xcolor}

\setlength{\textwidth}{14.5cm}
\setlength{\textheight}{23cm}
\setlength{\topmargin}{-2.0cm}
\setlength{\oddsidemargin}{.35in}
\setlength{\evensidemargin}{.35in}

\theoremstyle{plain}
\newtheorem{theorem}{Theorem}
\newtheorem*{theorem*}{Theorem}
\newtheorem{lemma}{Lemma}
\newtheorem{corollary}{Corollary}
\newtheorem*{corollary*}{Corollary}
\newtheorem{definition}{Definition}

\newtheorem*{conjecture*}{Conjecture}

\theoremstyle{definition}
\newtheorem{example}{Example}
\newtheorem*{remark*}{Remark}

\DeclareMathOperator*{\bigtimes}{\scalebox{1.5}{$\times$}}
\DeclareMathOperator*{\join}{\scalebox{1.8}{$\Join$}}

\def\beq{\begin{equation}}
\def\eeq{\end{equation}}

\def\R{{\mathbb R}}
\def\N{{\mathbb N}}

\def\Q{{\mathbb Q}}

\def\-{\backslash}
\def\A{\mathcal{A}}
\def\D{\mathcal{D}}
\def\P{\mathcal{P}}
\def\L{\Lambda}
\def\K{\textup{K}}
\def\M{\textup{M}}
\def\T{\textup{T}}
\def\S{\Sigma}
\def\s{\sigma}
\def\a{\alpha}
\def\b{\beta}
\def\g{\gamma}
\def\hR{\widehat{R}}
\def\ter{\mathrm{\mathbf{ter\,}}}

\title{Logical reduction of relations:
from relational databases to Peirce's reduction thesis}
\author{Sergiy Koshkin\\
\\
Department of Mathematics and Statistics\\
University of Houston-Downtown\\
One Main Street\\
Houston, TX 77002\\
e-mail: koshkins@uhd.edu}
\date{}
\begin{document}

\maketitle
\begin{abstract} 

We study logical reduction (factorization) of relations into relations of lower arity by Boolean or relative products that come from applying conjunctions and existential quantifiers to predicates, i.e. by primitive positive formulas of predicate calculus. Our algebraic framework unifies natural joins and data dependencies of database theory and relational algebra of clone theory with the bond algebra of C.S. Peirce. We also offer new constructions of reductions, systematically study irreducible relations and reductions to them, and introduce a new characteristic of relations, ternarity, that measures their `complexity of relating' and allows to refine reduction results. In particular, we refine Peirce's controversial reduction thesis, and show that reducibility behavior is dramatically different on finite and infinite domains.

\bigskip

\textbf{Keywords}: relational algebra, relation scheme, attribute, Cartesian product, Boolean product, natural join, primitive positive formula, constraint satisfaction problem, co-clone, project-join expression, relative product, irreducible relation, teridentity, bonding algebra, Peirce's reduction thesis, subcubic graph
\end{abstract}

\section*{Introduction}

We study decomposition of relations into simpler relations by logical operations expressible in terms of conjunctions and existential quantifiers on predicates, i.e. by primitive positive formulas of predicate calculus \cite{Lag17}. Such analysis goes back to the work of C.S. Peirce at the dawn of algebraic logic, see \cite{Brun91,Bur97,Kosh22} for modern accounts. However, while some aspects of it have been developed by Schr\"oder, L\"owenheim, Tarski, and others \cite{Buss}, they gradually shifted the focus to predicates in formal theories and questions of axiomatization. On the other hand, decomposition of relations in algebras of relational clones or co-clones is closely related to Peirce's, but was developed independently of his work, from Post's study of the dual decomposition of functions on finite sets, see  \cite{Lau,PosKal}. Peirce's results were largely forgotten until modern formalizations of his bond algebra in \cite{Bur91,Herz}.

The original interest was linguistic and philosophical. A different stimulus came from the relational database model introduced by Codd \cite{Codd70}, and developed by Fagin \cite{Fag77}, Rissanen \cite{Riss77}, and others. It featured weaker than Peirce's, but closely related, notion of decomposition. Unfortunately, the two streams of literature remained largely disjoint. This paper is, in part, a unified survey of classical but little known (to logicians and mathematicians) results and their interconnections, and, in part, their extension by the author.

From the mathematical perspective, join decomposition of relations is somewhat analogous to factorization of polynomials. There is even a ready analog of polynomial's degree that measures its `multiplicative complexity' -- relation's arity (adicity, rank), the number of its attributes (places, positions). There is an even stronger analogy to the recent theory of factorization in highly non-cancellative monoids with degree-like height function \cite{Tring22}. Accordingly, we will call decomposition terms {\it factors}, and call decompositions {\it reductions} when all the factors have strictly lower arity than the relation itself. 

A technical device of database theory that we will extensively use in this paper is the calculus of {\it attributed relations} (the term is due to \cite{YanPap}, see also {\it named perspective} in \cite[3.2]{AHV95}). Those are collections of maps from a set of attributes, called the relation scheme, to the domain, rather than ordered lists of domain elements, as in ordinary set-theoretic relations. This allows us to identify positions in different relations through relation schemes, the same flexibility one gets by placing the same variable into different predicates in logical formulas, and removes much of combinatorial clutter that plagues definitions and calculus of operations on ordinary relations (compare to \cite{Bur91,CorPos04}). Moreover, operations on attributed relations have better algebraic properties.

For example, the Cartesian product is commutative and associative on attributed relations, albeit only partially defined: relations that share attributes cannot be multiplied. What we get by extending it to all attributed relations is classically known as Boolean product, and it is still commutative and associative. In logical terms, it amounts to taking conjunctions of predicates with some variables identified instead of only free conjunctions. In predicate calculus, such expressions are called quantifier-free primitive positive formulas \cite{Lag17}. Its iteration, called the natural join, is the primary operation of the database theory. The reason is that join decompositions are closely associated with various data dependencies, and can be used to store, query and update the data more efficiently \cite{ABU79,BeeFag81,Fag77,Lee83,Maier,Riss77}.

We start by reviewing some known join reduction methods that rely on exploiting various dependencies among relation's tuples. The simplest one is functional dependency, when there is an attribute, called {\it key}, whose value determines the rest of its tuple (ID columns play this role in database tables). However, despite the abundance of reduction methods, it turns out that join irreducible relations also abound. Not only are there ones of arbitrarily high arity, but also `almost all' relations are join irreducible on large domains (Theorem \ref{JoinRedLim}). Although this fact must have been known to experts, the author did not encounter it stated in print.

Thus, join reduction is analogous to factorization of polynomials over the field of rationals, with irreducibles of arbitrarily high degrees. This prompts one to look for extensions with a more manageable set of irreducibles. However, in line with the literature on the subject, rather than enlarging the class of relations we opt for stronger algebra on the same class. The stronger operation is projective join (projoin for short) that combines join with projections. In logical terms, we allow to existentially quantify (`project out') some variables in predicate conjunctions, i.e. consider all primitive positive predicate formulas. 

While some project-join algebras have been studied in the more theoretical database literature \cite{DunMik,YanPap}, they appeared much more prominently in the theory of constraint satisfaction problems (CSP) that also dates back to 1970s. The constraints are expressed by relations on finite domains, and the problems are to decide whether relations from a given set can simultaneously hold on some elements of the domain (be satisfied). Computational complexity of CSP was actively studied in computer science, and in 1978 Schaefer discovered a deep connection between it and closure properties of sets of relations under projective joins on $2$-element domains \cite{Sch78}. The work initiated by Feder-Vardi and Jeavons in 1990s extended this connection to all finite domains, giving rise to what is now called the algebraic approach to complexity of CSP \cite{Born08}. In particular, it turned out that  if constraints are projoin complete, i.e. generate all relations, then the CSP is NP-complete (assuming P$\neq$NP). In general, projoin closures of relations are called relational clones or co-clones, and Schaefer characterized NP completeness in terms of them on $2$-element domains. They are dual to the functional clones of Post and studied since 1960s \cite{Lau,PosKal}. Join closures, called weak partial co-clones, also found applications to studying complexity of CSP, namely to refined complexity classification of NP-complete problems \cite{JLNZ13}.

The projoin reduction problem on finite domains can be reformulated as asking whether the set of all unary and binary relations is projoin complete, i.e. whether they generate the co-clone of all relations. In hindsight, the affirmative answer to the same question on infinite domains goes back to Peirce. The proof device, introduced by Peirce under the quaint name of `hypostatic abstraction', amounts, in database terms, to attaching key attribute(s) to the relation, and then projecting them out after join reducing the augmented relation (Theorem \ref{ProKeyRed}). Reducibility of all relations to unary and binary ones is more analogous to factorization of polynomials over the field of real numbers, where they reduce to linear and quadratic factors. 

Counterintuitively, the situation is much more complex on finite domains, and we introduce some further devices that allow to projoin reduce some relations when hypostatic abstraction does not (Section \ref{finprojoin}). They exploit the connection between projections and unions (existential quantification and disjunction). In particular, we generalize to projoins Fagin's characterization of certain joins in terms of multivalued dependencies (Theorem \ref{ProMultKeyRed}), and show how to convert complements (negations) of certain joins into projoins. However, we do not resort to a central device of clone theory, the Pol-Inv Galois connection \cite{Born08,Lau}, as our approach is to develop the more elementary Peircean methods that construct reductions explicitly.

While projection and join can be conveniently folded into a single projoin operation, this operation is not an iteration of any binary operation, like Cartesian product and join were. However, one can iterate a particular binary projoin classically known as relative product \cite{Brun91} (composition of binary relations is its restriction to them). Relative product is associative only on a restricted class of factors, and generates only an (ostensibly) narrow subclass of projoins, which we call bonds following \cite{Herz}. 

The bond algebra is, more or less, Peirce's project-join algebra. In a surprise, he established that bond reducibility is almost equivalent to general projoin reducibility. All relations of arity $4$ and higher are projoin reducible if and only if they are bond reducible, and it is only on ternaries (ternary relations) that the two notions diverge (Theorem \ref{ProJointoBond}). There are some projoin reducible but bond irreducible ternaries, notably the teridentity relation $I_3$ that contains all and only identical triples of domain elements and plays a key role in converting projoins into bonds. Bond reducibility of all relations on infinite domains to unaries, binaries and ternaries is known as Peirce's reduction thesis (see Theorem \ref{NotInProRedEq} for a precise formulation), and it is equivalent to the better known projoin reducibility to binaries alone, see e.g. \cite{Low}. The seeming discrepancy fueled a long historical controversy \cite{Bur97,Kosh22}.

Motivated by the thesis, we introduce the notion of ternarity of a relation as the minimal number of ternaries in its bond reductions to unaries, binaries and ternaries, and study its properties. The reason for singling out ternaries is that they do the main work of `relating' different attributes in a relation as represented by reductions. We establish a close connection between complete bond reductions and subcubic graphs, and prove that ternarity of non-degenerate $n$-ary relations is always $n-2$ on infinite domains (Theorem \ref{TerNonDeg}), one unit of ternarity per unit of arity over $n=2$. This refines the original reduction thesis. The proof uses graph-theoretic methods, pioneered by Peirce and prominent in the recent work on reduction \cite{CorDau,CorPos04}. A counterexample then shows that on finite domains this equality can fail already for $n=4$, and we prove that it fails for `almost all' relations on large finite domains.

The paper is organized as follows. Section \ref{Prelim} introduces our terminology and notation. In Sections \ref{Cart}-\ref{join} we review some standard results on Cartesian products and natural joins, stated in terms of attributed relations, and add results on join irreducibility. Sections \ref{projoin}-\ref{finprojoin} introduce projective joins (projoins) and associated reduction methods, old and new, including the hypostatic abstraction that settles the reduction problem on infinite domains. The irreducibility results are weaker for projoins, and only concern some restricted classes of them. We also explain why reduction behavior is so different in finite and infinite cases. In Section \ref{bond} we introduce Peirce's bonds, and explain his algorithm for converting projoins into them that features teridentity. Peirce's reduction thesis is then derived as a corollary of the results for projoins. Section \ref{diag} defines projoin graphs and bonding diagrams that allow us to apply graph-theoretic methods to bond reductions in Section \ref{tern}, which studies complete reductions and introduces ternarity. In Section \ref{ternPRT} we use ternarity to refine Peirce's reduction thesis and give some counterexamples. In the last section, we summarize our conclusions and state some open problems.

\section{Preliminaries}\label{Prelim}

We use the standard set-theoretic notation and terminology for sets and relations \cite{Lip}. Relations are defined on a set $\D$ called the {\it domain} and are subsets of its Cartesian powers $\D\times\dots\times\D$. When $R\subseteq\D^n$ the number $n$ is called the relation's {\it arity} and the relation is called $n$-ary relation or simply {\it $n$-ary} used as a noun. For $n=1,2,3,4$ we use the shorthands unary, binary, ternary, quaternary, respectively. 

Elements of $\D^n$ are called $n$-tuples or just {\it tuples}, when $n$ is understood or immaterial. If $a\in\D^n$ it's $i$-th member is denoted $a_i$. We adopt the usual convention of canonically identifying tuples of tuples with longer tuples, and hence of identifying $\D^n\times\D^{m}$ with $\D^{n+m}$, and so on. It is often convenient to interpret tuples as maps from the set of relation's positions $\N_n:=\{1,2,\dots,n\}$ to $\D$. 

Some standard $n$-aries that can be defined on any domain will be called and denoted as follows: the {\it empty relations} $\emptyset_n$ with no tuples; the {\it universal relations} $U_n:=\D^n$ that contain all possible $n$-tuples; the {\it identity relations} $I_n$ that contain all and only $n$-tuples with identical members; and the {\it diversity relations} $D_n$ that contain all and only $n$-tuples with pairwise distinct members. Each of those can be relativized to proper subsets $\A\subset\D$ that we will indicate by the upper index, e.g. $I_n^\A$ contains all and only $n$-tuples with identical members from $\A$.

The set of all maps from $S$ to $\D$ is denoted $\D^S$\!, the set of all subsets of $S$ is denoted $\P(S)$, and its cardinality  is denoted $|S|$. As is well known, $|\D^n|=|\D|^n$, $|\D^S|=|\D|^{|S|}$, and $|\P(S)|=2^{|S|}$.

{\it Attributed relation} $R$ is a subset $R\subseteq\D^\S$ \cite{CorDau,DunMik,YanPap},  where $\S$ is a set called the {\it relation scheme}, and its elements are called {\it attributes}. The {\it arity} of $R$ is defined to be $|\S|$, and its elements, which are now functions from $\S$ to $\D$, are also called tuples. We only consider finitary relations, so $\S$ is always finite, the ordinary relations correspond to $\S=\N_n$. When $\S$ is linearly ordered, e.g.  $\S\subset\N$, there is a canonical $1$-$1$ correspondence between $\S$ and $\N_{|\S|}$ obtained by listing elements of $\S$ in order, which induces a canonical $1$-$1$ correspondence between relations on the scheme $\S$ and ordinary relations.

When $\L\subseteq\S$ we will denote $a_\L\in\D^\L$ the attributed subtuple of $a$ consisting of its members in the $\L$ positions. When $\L,\M\subseteq\S$, and $\a\in\D^\L$, $\b\in\D^\M$ with $\a_{\L\cap\M}=\b_{\L\cap\M}$, we define their {\it concatenation} $\a\mid\b\in\D^{\L\cup\M}$ as the union of ordered pairs when they are taken as functions from $\S$ to $\D$. The intersection condition is needed for the union to also be a function, and the concatenation is always defined when $\L,\M$ are disjoint. When $\S\subset\N$ the members of the concatenated tuple are listed according to the order of their attributes in $\N$. For example, $((\a_1,\a_3)\mid(\b_2,\b_3,\b_5))=(\a_1,\b_2,\a_3,\b_5)$ assuming $\a_3=\b_3$. 

In the database model relations are visualized as rectangular tables with columns labeled by positions (attributes) and rows listing the tuple members. Database queries are then interpreted as operations on relations that extract relevant information and package it into simpler relations. Two operations inspired by this interpretation will be useful to us. The first is {\it projection} to a subset of positions $\L\subseteq\S$:
$$
\pi_\L R:=\{a_\L\,\Big|\,a\in R\},
$$
that simply deletes all non-$\L$ columns and removes duplicate tuples, if any, in the remaining $\L$ columns. 
When no confusion results, we write simply $\pi_{i_1,\dots,i_k}R$ instead of $\pi_{\{i_1,\dots,i_k\}}R$.
The second operation is {\it selection} over a subset:
\begin{equation}\label{ValSel}
\s_{x_\L=\a}R:=\{a_{\L^c}\,\Big|\,a\in R, a_\L=\a\},
\end{equation}
that leaves only rows with prescribed values in the $\L$ columns (given by $\a\in\D^\L$), and then deletes those columns. Here $\L^c:=\S\-\L$ is the complement of $\L$. Note that both $\pi_\L R$ and $\s_{x_\L=\a}R$ are attributed relations on the schemes $\L,\L^c$, respectively.

Unless otherwise stated, all predicates will be interpreted on a domain, and relations will be identified with the predicates they interpret. In particular, the same letter will be used for a relation and its predicate, i.e. $R(a_1,\dots,a_n)$ will mean the same as $(a_1,\dots,a_n)\in R$, and $\neg R$ will denote the complement of $R$ in $\D^\S$. This identification is convenient because attribution maps are naturally expressed in predicates by placing the same variables into multiple positions. 

The same notational conventions apply to tuples of variables as to tuples of values, e.g. $x_\L$ is the subtuple of variables with the indices from $\L$, $x_\L\mid x_M$ is the concatenation of variable tuples, etc. The standard logical operations, conjunction $\land$, disjunction $\lor$, etc., will be used with the usual meaning, and for $\L=\{i_1,\dots,i_k\}$ the multiple quantification $\exists x_{i_1}\dots\exists x_{i_1}R(x)$ will be abbreviated as $\exists x_\L R(x)$. With these conventions and in terms of predicates, the projection is expressed simply as $\pi_\L R\,(x_\L)=\exists x_{\L^c} R(x)$, and the selection as $\s_{x_\L=\a}R\,(x_{\L^c})=R(\a\!\mid\! x_{\L^c})$. For example, $\pi_{1,3}R\,(x,y)=\exists tR(x,t,y)$, and $\s_{x_{1,3}=(\a',\a'')}R\,(z)=R(\a',z,\a'')$.

\section{Cartesian Products}\label{Cart}

The simplest way a relation decomposes into lower arity ones is when it is a Cartesian product of them. We need a slight generalization so that the relation does not cease to be a Cartesian product simply because its positions are permuted. This is taken care of when using attributed relations. We give the definition directly for any finite number of factors, but one can see that it comes from iterating the binary product. Recall that a {\it partition} $\S=\L_1\cup\dots\cup\L_m$ of a set $\L$ is its representation as a disjoint union of subsets. 
\begin{definition}\label{CartPart}
Given disjoint finite subsets $\L_i\subseteq\S$ and attributed relations $R_i\subseteq\D^{\L_i}$, their Cartesian product $R\subseteq\D^{\,\cup_i\L_i}$ is the set of concatenations of tuples from $R_i$:
$$
R_1\times\dots\times R_m:=\{(a^1|\dots|\, a^m)\,\Big|\,a^i\in R_i\}.
$$
A relation $R\in\D^\S$ is a \textbf{Cartesian product over a partition} $\S=\L_1\cup\dots\cup\L_m$ when there exist $R^{\L_i}\in\D^{\L_i}$ such that $R=R^{\L_1}\times\dots\times R^{\L_m}$, and $R^{\L_i}$ are called its \textbf{Cartesian factors}. It is called \textbf{degenerate} when it is a Cartesian product over a partition with $m>1$ and all $\L_i\neq\emptyset$, and it is called \textbf{$\boldsymbol{l}$-aric} when all factors have the same arity $l$.
\end{definition}
\noindent We reiterate that, on this definition, Cartesian factors are attributed relations, and $R$ is not their Cartesian product in the ordinary sense, but rather some permutation of it. By definition, Cartesian factorization is a reduction because all $R^{\L_i}$ have smaller arity $|\L_i|<n$, and when it exists, the factors are none other than the projections $R^{\L_i}=\pi_{\L_i} R$.

The definition is more straightforward in terms of predicates, it means that the predicate of $R$ is a {\it free conjunction} of lower arity predicates:
\begin{equation}\label{FreeConj}
R(x_1,\dots,x_n)=R^{\L_1}(x_{\L_1})\land\dots\land R^{\L_m}(x_{\L_m}).
\end{equation} 
The following is a characteristic property of Cartesian products.
\begin{theorem}[\textbf{Independence criterion}]\label{CartInd} 
A relation $R\subseteq\D^\S$ is a Cartesian product over a partition $\S=\L_1\cup\dots\cup\L_m$ if and only if the values of its tuples on $\L_i$ can be chosen independently, i.e. for any collection of $\a^i\in\pi_{\L_i}R$ there exists a common $a\in R$ such that $a_{\L_i}=\a^i$.
\end{theorem}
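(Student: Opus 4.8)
The plan is to prove both implications by taking the projections $\pi_{\L_i}R$ as the canonical candidates for the Cartesian factors. The whole argument rests on two elementary observations: first, since the $\L_i$ form a partition of $\S$, every tuple $a\in\D^\S$ is recovered from its restrictions as the concatenation $a=a_{\L_1}\mid\dots\mid a_{\L_m}$; and second, the Cartesian product is by definition exactly the set of such concatenations with the $i$-th block drawn from $R^{\L_i}$. Thus the independence condition in the statement is just the quantifier-level reformulation of membership in the product, and the task reduces to translating carefully between the two.

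For the forward direction I would assume $R=R^{\L_1}\times\dots\times R^{\L_m}$ and first confirm that the given factors agree with the projections, $R^{\L_i}=\pi_{\L_i}R$, as already noted in the remark following Definition \ref{CartPart} (valid whenever $R\neq\emptyset$). Then, given any choice $\a^i\in\pi_{\L_i}R=R^{\L_i}$, the definition of the product immediately supplies the common witness $a=\a^1\mid\dots\mid\a^m\in R$ with $a_{\L_i}=\a^i$. This half is a direct unwinding of definitions.

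The converse is the substantive half. Here I would set $R^{\L_i}:=\pi_{\L_i}R$ and establish the two inclusions of $R=R^{\L_1}\times\dots\times R^{\L_m}$. The inclusion $\subseteq$ is automatic: any $a\in R$ restricts to $a_{\L_i}\in\pi_{\L_i}R$, and because the $\L_i$ partition $\S$ it equals the concatenation of those restrictions, so it lies in the product. The inclusion $\supseteq$ is exactly where the independence hypothesis is used: an element of the product is of the form $\a^1\mid\dots\mid\a^m$ with $\a^i\in\pi_{\L_i}R$, the hypothesis produces a common $a\in R$ with $a_{\L_i}=\a^i$, and since a tuple on a partitioned scheme is determined by its restrictions, $a$ must equal that concatenation and hence belongs to $R$.

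The only delicate point -- hardly a real obstacle -- is the bookkeeping for empty factors: if some $\pi_{\L_j}R=\emptyset$ then $R=\emptyset$, and one checks the equivalence holds vacuously, since no admissible collections $\a^i$ exist and $\emptyset$ is trivially a Cartesian product. Beyond this edge case, there is no genuine difficulty; the crux of the whole proof is simply recognizing that, over a partitioned scheme, a tuple is nothing more than the tuple of its restrictions, so that ``choosing the blocks independently'' and ``being a concatenation of projections'' say the same thing.
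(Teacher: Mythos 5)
Your proposal is correct and follows essentially the same route as the paper: the forward direction reads the witness off the definition of the product (with factors identified as the projections), and the converse establishes the two inclusions $R\subseteq\pi_{\L_1}R\times\dots\times\pi_{\L_m}R$ and its reverse, using the independence hypothesis together with the fact that a tuple over a partitioned scheme is determined by its restrictions. The remark on the empty-factor edge case is a harmless addition the paper leaves implicit.
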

\begin{proof} Let $R$ be a Cartesian product over $\L_i$. If $\a^i\in\pi_{\L_i}R=R^{\L_i}$ then $R^{\L_i}(\a^i)$ holds for all $i$, and hence $R^{\L_1}(\a^1)\land\dots\land R^{\L_m}(\a^m)$ holds. But by \eqref{FreeConj} this means that $R(a)$ holds for the concatenation $a:=(\a^1|\dots|\,\a^m)$. Thus, $a\in R$ and $a_{\L_i}=\a^i$.

Conversely, suppose that $R$ satisfies the independence condition over $\L_i$. If $a\in R$ then $a_{\L_i}\in\pi_{\L_i} R$ by definition of projection, so $R\subseteq\pi_{\L_1}R\times\dots\times\pi_{\L_m}R$. On the other hand, if $a\in\pi_{\L_1}R\times\dots\times\pi_{\L_m}R$ then $a_{\L_i}\in\pi_{\L_i} R$, and, by the independence condition, there is a common $b\in R$ with $b_{\L_i}=a_{\L_i}$. Since $\L_i$ form a partition we have $a=b\in R$. Thus, $\pi_{\L_1}R\times\dots\times\pi_{\L_m}R\subseteq R$ and they are equal.
\end{proof}
\noindent Note that if we assign equal probability to all tuples of $R$ degenerate over a partition $\L_i$ then $a_{\L_i}$ will be statistically independent random vectors on this sample space \cite{Mal}.

When testing for degeneracy it is sufficient to consider bipartitions only. Indeed, if $R$ can be split into several factors then we can always group them into just two non-trivial factors, say, the first one and the product of the rest.
\begin{example} The independence criterion can be used directly to establish degeneracy or non-degeneracy of some relations. Clearly, all unary relations are non-degenerate on any domain. On domains $\D$ with $|\D|\geq2$ the identity relations $I_n$ are non-degenerate for $n\geq2$. Indeed, constant tuples $(\a,\dots,\a),(\b,\dots,\b)\in I_n$ for any $\a,\b\in\D$. If $I_n$ were degenerate, we could, by independence, assign constant $\a$ values to some $\L_i$ and constant $\b$ values to others. But if $\a\neq\b$ their concatenation will not be in $I_n$, contradiction. 

Similarly, for $|\D|\geq n$ the diversity relations $D_n$ are non-degenerate ($D_n=\emptyset_n$ for $|\D|<n$). Indeed, any diverse (with no equal values) subtuple is in the projection of $D_n$ to any proper subset of positions. But subtuples over disjoint subsets of positions can share values, and their concatenation will not be in $D_n$.
\end{example}

The following simple proposition gives two sufficient conditions of non-degeneracy that are easier to check in examples.
\begin{theorem}\label{DegTests} Suppose one of the following conditions holds for a relation $R\subseteq\D^\S$.

\textup{(i)} $R$ is not universal, but for any non-empty proper subset $\emptyset\subset\L\subset\S$ \par we have $\pi_{\L}R=\D^{\L}$. 

\textup{(ii)} $\neg R$ is not empty, but for any $i\in\S$ we have $\pi_i(\neg R)\neq\D$.

Then $R$ is non-degenerate.

\end{theorem}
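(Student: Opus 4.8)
The plan is to prove each implication by contraposition: assuming $R$ is degenerate, I would show that neither (i) nor (ii) can hold. By the remark preceding the theorem, degeneracy need only be tested on bipartitions, so I may assume $R=\pi_\L R\times\pi_{\L^c}R$ for some partition $\S=\L\cup\L^c$ with both $\L$ and $\L^c$ non-empty, the Cartesian factors being the projections $\pi_\L R$ and $\pi_{\L^c}R$ as noted after Definition~\ref{CartPart}.

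For part (i), the argument is immediate. Since $\L$ and $\L^c$ are both non-empty proper subsets of $\S$, hypothesis (i) applies to each and gives $\pi_\L R=\D^\L$ and $\pi_{\L^c}R=\D^{\L^c}$. Hence $R=\D^\L\times\D^{\L^c}=\D^\S$, the universal relation, contradicting the first clause of (i). So (i) forces non-degeneracy.

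For part (ii), I would first record the complement of a Cartesian product as a union of two `slabs':
\[
\neg R=\bigl((\D^\L\setminus\pi_\L R)\times\D^{\L^c}\bigr)\cup\bigl(\D^\L\times(\D^{\L^c}\setminus\pi_{\L^c}R)\bigr),
\]
which holds because a tuple fails to lie in $\pi_\L R\times\pi_{\L^c}R$ exactly when one of its two subtuples is missing from the corresponding factor. Since $\neg R$ is non-empty by the first clause of (ii), at least one factor is proper; say $\pi_\L R\neq\D^\L$, so $\D^\L\setminus\pi_\L R\neq\emptyset$. Now pick any coordinate $i\in\L^c$, possible as $\L^c\neq\emptyset$. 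The first slab already surjects onto $\D$ in coordinate $i$: fixing any subtuple in $\D^\L\setminus\pi_\L R$ and letting the $\L^c$-part range freely realizes every value of $\D$ at position $i$. Therefore $\pi_i(\neg R)=\D$, contradicting the second clause of (ii), and (ii) likewise forces non-degeneracy.

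The only step requiring any care is the complement-of-product identity together with the verification that a single slab already fills an entire coordinate; everything else is bookkeeping with the independence criterion of Theorem~\ref{CartInd}. It is worth noting the pleasant duality of the two tests: (i) detects degeneracy through full projections of $R$ itself, whereas (ii) detects it through full single-coordinate projections of $\neg R$, with each route contradicting one of the non-triviality clauses.
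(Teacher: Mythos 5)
Your proposal is correct and follows essentially the same route as the paper: assume a degenerate factorization (reduced to a bipartition, which the paper's preceding remark licenses), derive universality of $R$ from universal projections for (i), and for (ii) apply de Morgan — your ``union of slabs'' identity is exactly the paper's disjunction $\neg R=\neg R^{\L_1}\lor\dots\lor\neg R^{\L_m}$ specialized to two factors — then fill an entire coordinate outside the offending block to contradict $\pi_i(\neg R)\neq\D$. The only cosmetic difference is that the paper argues directly with $m$-fold partitions and predicate notation rather than reducing to bipartitions first.
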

\begin{proof} \textup{(i)} According to \eqref{FreeConj}, $R$ is a conjunction of $\pi_{\L_i} R$ with proper subsets $\L_i$. Since each projection is universal, by assumption, so must be their conjunction, contradiction. 

\textup{(ii)} Let $\delta\in\D$ be arbitrary. If $R$ is a product negating \eqref{FreeConj} we obtain, by the de Morgan law:
\begin{equation*}
\neg R(x_1,\dots,x_n)=\neg R^{\L_1}(x_{\L_1})\lor\dots\lor\neg  R^{\L_m}(x_{\L_m}).
\end{equation*} 
Since $\neg R$ is not empty so is one of the disjuncts, say $\neg R^{\L_1}$. Pick some $\a\in\neg R^{\L_1}$ and some $i\not\in\L_1$, which exists because $\L_1$ is proper, and set $a_i:=\delta$, $a_{\L_1}:=\a$, then assign values for $a_j$ with $j\not\in\L_1\cup\{i\}$  arbitrarily. By construction, $\neg R^{\L_1}(a_{\L_1})$ holds and hence so does $\neg R(a)$ because $\neg R$ is a disjunction of $\neg R^{\L_i}$. But $\delta$ was arbitrary, and so $\pi_i(\neg R)=\D$, contrary to the assumption.
\end{proof}
\begin{example}\label{DegNotIn} By Theorem \ref{DegTests}\,(i), the non-identity relations $\neg I_n$ with any $n\geq2$ are non-degenerate (they trivially are for $n=1$) when $|\D|\geq2$. Indeed, $\pi_{\L}(\neg I_n)$ is universal for any non-empty proper $\L$ because any tuple constant over $\L$ can be complemented by assigning a different value on $i\not\in\L$, and non-constant tuples can be complemented arbitrarily, with the result in $\neg I_n$ in both cases.

The above reasoning no longer applies to $R:=\neg I_n^\A$ for $\A$ a non-empty proper subset of $\D$. However, $\pi_i(\neg R)=\pi_i(I_n^\A)=\A\neq\D$ for any $i$ by assumption, and $\neg I_n^\A$ is non-degenerate by Theorem \ref{DegTests}\,(ii). In particular, by picking $\A=\{\a\}$ we see that $U_n\-\{(\a,\dots,\a)\}$ is non-degenerate for any $\a\in\D$ when $|\D|\geq2$.
\end{example}
The above examples already show that there are non-degenerate relations of arbitrarily high arity. Therefore, the situation with Cartesian factorization is similar to factorization of polynomials over $\Q$ with irreducible polynomials of arbitrarily high degrees. The next theorem shows, by a counting argument, that, as far as reductive power is concerned, the situation is even worse -- `almost all' relations are non-degenerate.
\begin{theorem}\label{DegLim} The share of degenerate $n$-ary relations among all such relations on a domain $\D$ asymptotically vanishes when $|\D|\geq2$ and $n\to\infty$, or when $n\geq2$ and $|\D|\to\infty$. 
\end{theorem}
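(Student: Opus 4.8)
The plan is a direct counting (union-bound) argument. Write $d:=|\D|$, so $\D^n$ has exactly $d^n$ tuples and hence there are $2^{d^n}$ relations in $\D^n$ altogether; this is the denominator in the ``share.'' As noted right after Definition \ref{CartPart}, when testing for degeneracy it suffices to consider bipartitions, i.e. a relation is degenerate if and only if it is a Cartesian product over some $\S=\L\cup\L^c$ with $\emptyset\subset\L\subset\S$. So I would over-count the relations arising in this way and sum the bound over all such $\L$.

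First I would bound the number of products over a single fixed bipartition. If $R=R^\L\times R^{\L^c}$, then, as observed in the text, the factors are recovered as the projections $R^\L=\pi_\L R$ and $R^{\L^c}=\pi_{\L^c}R$, so $R$ is completely determined by the pair $(R^\L,R^{\L^c})\in\P(\D^\L)\times\P(\D^{\L^c})$. With $|\L|=k$ there are at most $2^{d^k}\cdot 2^{d^{n-k}}=2^{d^k+d^{n-k}}$ such pairs. Grouping the proper nonempty subsets $\L$ by their size $k=|\L|$ (there are $\binom nk$ of each size) yields
\[
\#\{\text{degenerate }n\text{-aries}\}\ \le\ \sum_{k=1}^{n-1}\binom nk\,2^{\,d^k+d^{n-k}}.
\]

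The key elementary estimate is that the exponent $d^k+d^{n-k}$ is maximized over $1\le k\le n-1$ at the endpoints $k=1,n-1$: the increment $\bigl(d^k+d^{n-k}\bigr)-\bigl(d^{k-1}+d^{n-k+1}\bigr)=(d-1)\bigl(d^{k-1}-d^{n-k}\bigr)$ shows that $d^k+d^{n-k}$ first decreases and then increases in $k$, whence $d^k+d^{n-k}\le d+d^{n-1}$ throughout. Since $\sum_{k=1}^{n-1}\binom nk<2^n$, the displayed sum is at most $2^{\,n+d+d^{n-1}}$, so dividing by $2^{d^n}$ bounds the share of degenerate relations by
\[
2^{\,n+d+d^{n-1}-d^n}=2^{\,n+d-d^{n-1}(d-1)}.
\]
It then remains to check the two regimes: for $d\ge2$ fixed and $n\to\infty$, the term $d^{n-1}(d-1)\ge d^{n-1}$ swamps the linear $n+d$; while for $n\ge2$ fixed and $d\to\infty$, one has $d^{n-1}(d-1)\ge d^2-d$, which dominates $n+2d$. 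In both cases the exponent tends to $-\infty$, so the share tends to $0$.

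Since every step is an explicit inequality, I do not anticipate a genuine obstacle. The only points requiring care are the reduction to bipartitions (so that one union bound over the $2^n-2$ proper nonempty subsets $\L$ suffices, rather than a sum over all set partitions into $m$ blocks), and the verification that it is the unbalanced endpoint exponent $d+d^{n-1}$, and not the balanced one ($\approx 2d^{n/2}$), that controls the sum; everything else is bookkeeping.
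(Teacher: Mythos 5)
Your proposal is correct and follows essentially the same argument as the paper: a union bound over bipartitions, counting each product by its pair of projection factors, the endpoint maximization $d^k+d^{n-k}\le d+d^{n-1}$, and the bound $\sum_{k}\binom nk<2^n$, yielding the same final exponent up to the paper's extra factor of $\tfrac12$ from the $\L\leftrightarrow\L^c$ symmetry. No substantive differences.
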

\begin{proof} The set of all $n$-ary relations on $\D$ is $\P(\D^n)$, so their total number is $|\P(\D^n)|=2^{|\D|^n}$. For any degenerate relation there is a non-empty proper subset $\L\subset\N_n$ such that it is a Cartesian product over the bipartition $\L\cup\L^c$, and every bipartition is counted twice by $\L$ due to the symmetry between $\L$ and $\L^c$. There are $\binom{n}{k}$ subsets of $\N_n$ with $|\L|=k$, and we can assign $2^{|\D|^{|\L|}}=2^{|\D|^k}$ and $2^{|\D|^{|\L^c|}}=2^{|\D|^{n-k}}$ different relations to each of the Cartesian factors. Of course, some of the products obtained in this way may coincide, so we only get an upper bound for the total number $N_{deg}$ of degenerate relations:
\begin{equation*}
N_{deg}\leq\frac12\,\sum_{k=1}^{n-1}\binom{n}{k}\,2^{|\D|^k+|\D|^{n-k}}.
\end{equation*} 
By calculus, the function $|\D|^k+|\D|^{n-k}$ on $1\leq k\leq n-1$ takes its maximal values at the endpoints when $|\D|\geq2$, so $|\D|^k+|\D|^{n-k}\leq|\D|+|\D|^{n-1}$ for all $k$. Therefore,
\begin{equation}\label{Ndeg}
N_{deg}\leq2^{|\D|+|\D|^{n-1}-1}\sum_{k=1}^{n-1}\binom{n}{k}
<2^{n-1+|\D|+|\D|^{n-1}},
\end{equation}
because $\sum_{k=1}^{n-1}\binom{n}{k}=2^n-2<2^n$. The conclusions now follow by applying calculus to the fraction $\frac{2^{n-1+|\D|+|\D|^{n-1}}}{2^{|\D|^n}}$.
\end{proof}

\section{Joins and dependencies}\label{join}

As far as analysis of relations is concerned, Cartesian products do not take us very far. Asymptotically, in either domain size or arity, almost all relations do not factor into them. A natural move is to consider more general decompositions. In terms of predicates, Cartesian products correspond to free conjunctions, so let us allow conjunctions that are not necessarily free, i.e. some variables in them are identified. This leads to the notion of {\it natural join}, or join for short, introduced by Codd in the context of relational databases \cite{Codd70}. Its binary version, {\it Boolean product}, goes back to the 19th century \cite{Brun91}. Our definition of join parallels the definition of Cartesian product, the only difference is that the relation schemes no longer have to form a partition of $\S$, only a cover. 
\begin{definition}\label{JoinCov}
Given finite subsets $\L_i\subseteq\S$ and attributed relations $R_i\in\D^{\L_i}$, their (natural) join $R\in\D^{\,\cup_i\L_i}$ is defined as the set of concatenations of tuples from $R_i$ that match on overlaps of $\L_i$:
$$
R_1\Join\dots\Join R_m:=\{(a^1|\dots|\, a^m)\,\Big|\,a^i\in R_i,\,a^i_{\L_i\cap\L_j}\!=a^j_{\L_i\cap\L_j}\}.
$$
A relation $R\in\D^\S$ is a \textbf{(natural) join over a cover} $\S=\L_1\cup\dots\cup\L_m$ when there exist $R^{\L_i}\in\D^{\L_i}$ such that $R=R^{\L_1}\Join\dots\Join  R^{\L_m}$, and $R^{\L_i}$ are called its \textbf{join factors}. It is called \textbf{join reducible} when  $0<|\L_i|<|\S|$.
\end{definition}
\noindent As the notation indicates, join of multiple factors is obtained by iterating Boolean product $\Join$. On attributed relations Boolean product is commutative and associative, just like Cartesian product, because ordering of positions does not get in the way. 

In terms of predicates, joins are just conjunctions of predicates over subsets of variables with some variables shared among them:
\begin{equation}\label{JoinConj}
R(x_1,\dots,x_n)=R^{\L_1}(x_{\L_1})\land\dots\land R^{\L_m}(x_{\L_m}).
\end{equation} 
This formula looks exactly like the free conjunction \eqref{FreeConj}, except now $\L_i$ may overlap, e.g. $R^{1,3}(x_1,x_3)\land R^{1,2,4}(x_1,x_2,x_4)\land R^{3,4}(x_3,x_4)$ is a join. In mathematical logic,  joins of predicates are called quantifier-free primitive positive formulas, and join closures are called weak partial co-clones in CSP literature \cite{JLNZ13,Lag17}.

Note that \eqref{JoinConj} no longer implies that $R^{\L_i}=\pi_{\L_i}R$, as it did for Cartesian products. This is because some tuples in $R^{\L_i}$ may not have companion tuples in other $R^{\L_j}$ consistent with them on overlaps, and do not make it into the join. However, \eqref{JoinConj} remains valid if we {\it replace} $R^{\L_i}$ by $\pi_{\L_i}R$ because all and only joinable tuples are present in the projections. In this weakened sense, joins share the constructive property of Cartesian products -- their (minimized) factors can be recovered as projections of the join itself, as pointed out by Rissanen \cite{Riss77}. When the join factors contain only joinable tuples they are said to join {\it completely} \cite[2.4]{Maier}.

There is also an analog of the independence criterion for Cartesian products \cite{Lee83}.
\begin{theorem}[\textbf{Join criterion}]\label{JoinInd} 
A relation $R\subseteq\D^\S$ is a join over a cover $\S=\L_1\cup\dots\cup\L_m$ if and only if the values of its tuples on $\L_i$ can be chosen independently up to consistency on overlaps, i.e. for any collection of $\a^i\in\pi_{\L_i}R$ with $\a^i_{\L_i\cap\L_j}\!=\a^j_{\L_i\cap\L_j}$ there is common $a\in R$ such that $a_{\L_i}=\a^i$.
\end{theorem}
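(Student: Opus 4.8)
The plan is to mirror the proof of the independence criterion (Theorem \ref{CartInd}), replacing disjointness of the blocks by the cover condition and carefully tracking overlaps. Throughout I would work with the projection form of the join: by the remark following \eqref{JoinConj}, a relation is a join over the cover $\S=\L_1\cup\dots\cup\L_m$ if and only if $R=\pi_{\L_1}R\Join\dots\Join\pi_{\L_m}R$, since any join factors may be replaced by the corresponding projections without changing the join. It therefore suffices to prove that the stated independence-up-to-consistency condition is equivalent to this identity.

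For the forward direction I would assume $R$ is a join over the cover, so $R=\pi_{\L_1}R\Join\dots\Join\pi_{\L_m}R$. Given $\a^i\in\pi_{\L_i}R$ that agree on all pairwise overlaps $\L_i\cap\L_j$, the consistency condition is exactly what is needed for the union of the $\a^i$, taken as partial functions $\S\to\D$, to be a single well-defined function $a:=(\a^1|\dots|\,\a^m)$; this is the point where disjointness, automatic in the Cartesian case, gets replaced by matching on overlaps. Because the $\a^i$ lie in the projections and match on overlaps, $a$ satisfies the defining membership condition of Definition \ref{JoinCov}, so $a\in R$, and by construction $a_{\L_i}=\a^i$.

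For the converse I would assume the join criterion and prove the two inclusions defining $R=\pi_{\L_1}R\Join\dots\Join\pi_{\L_m}R$. The inclusion $R\subseteq\pi_{\L_1}R\Join\dots\Join\pi_{\L_m}R$ is immediate: for $a\in R$, each $a_{\L_i}\in\pi_{\L_i}R$ and the restrictions of a single tuple trivially agree on overlaps, so $a=(a_{\L_1}|\dots|\,a_{\L_m})$ lies in the join, the cover property guaranteeing that concatenating the restrictions recovers all of $a$. For the reverse inclusion I would take $a=(\a^1|\dots|\,\a^m)$ in the join, so $\a^i\in\pi_{\L_i}R$ agree on overlaps; the criterion supplies a common $b\in R$ with $b_{\L_i}=\a^i$, and since the $\L_i$ cover $\S$ every position of $b$ is pinned down by some $\a^i$, forcing $b=a$ and hence $a\in R$.

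The routine steps are the set-membership manipulations; the one genuinely delicate point, and the only place the argument differs structurally from the Cartesian case, is the role of the consistency-on-overlaps hypothesis. It does double duty: it makes the concatenation $(\a^1|\dots|\,\a^m)$ well-defined as a function, so that a candidate tuple exists at all, and it is precisely the extra constraint under which independence is claimed, ruling out the spurious combinations that would otherwise fail to lie in the join. Verifying that these two uses line up, that the tuples the criterion must reconcile are exactly the consistent ones the join produces, is the step I would check most carefully.
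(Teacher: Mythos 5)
Your proposal is correct and follows exactly the route the paper intends: it transplants the proof of Theorem \ref{CartInd}, using the fact that join factors can be replaced by projections, with the consistency-on-overlaps hypothesis doing the work that disjointness did in the Cartesian case. The paper explicitly omits this proof as "analogous to the proof of Theorem \ref{CartInd}," and your argument is precisely that analogue, so there is nothing to add.
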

\noindent The proof is analogous to the proof of Theorem \ref{CartInd} and we omit it. 

Unfortunately, this criterion is not nearly as useful in detecting either join reducibility or irreducibility. More constructive reducibility conditions are usually formulated in terms of {\it data dependencies} \cite{BeeFag81,Fag77,Lee83,Maier,MenMai}, the simplest of which is functional.
\begin{definition}\label{FunDep}
Let $\L,\M\subseteq\S$. We say that a relation $R\subseteq\D^\S$ has a functional dependency $\L\to\M$ when its $\L$ values determine its $M$ values, i.e. for any $a,b\in R$ if $a_\L=b_\L$ then $a_\M=b_\M$. When $\K\to\K^c$ for some $\K\subset\S$ then $\K$ is called a \textbf{key} of $R$, and more precisely a $\boldsymbol{k}\textbf{-key}$, where $k=|\K|$.
\end{definition}
\noindent In databases, key column(s) are those whose entries determine the entire record, for example, the ID column. This situation was prototypical for choosing the terminology, the key column(s) code objects the relation is about, and the rest list their attributes. What matters to us is that availability of keys ensures join reducibility: instead of relating all attributes to their objects in a single relation we can relate them one at a time. The idea goes back to the work of Peirce and is in close affinity to his ``hypostatic abstraction" (see \cite{Bur97,Kosh22} and Section \ref{projoin}).
\begin{theorem}\label{KeyRed} 
Any $R\subseteq\D^\S$ with a $k$-key $\K=\{i_1,\dots,i_k\}\subseteq\S$ decomposes into a join of $(k+1)$-aries as 
\begin{equation}\label{KeyRedRel}
R=\join\limits_{i\not\in\K}\pi_{\K\cup\{i\}}R
=R_{i_{k+1}}\!\Join\dots\Join R_{i_{n}},
\end{equation}
where $R_i:=\pi_{\K\cup\{i\}}R$. In the predicate form,
\begin{equation}\label{KeyRedPred}
R(x_1,\dots,x_n)=R_{i_{k+1}}(x_{i_1},\dots,x_{i_k}, x_{i_{k+1}})\land\dots\land R_{i_{n}}(x_{i_1},\dots,x_{i_k}, x_{i_n})
\end{equation}
In particular, $R$ is join reducible when $k\leq n-2$, and join reducible to binaries when $k=1$.
\end{theorem}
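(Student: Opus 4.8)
The plan is to reduce everything to the single set identity $R=\join_{i\notin\K}\pi_{\K\cup\{i\}}R$ asserted in \eqref{KeyRedRel}; the predicate form \eqref{KeyRedPred} is then nothing but its restatement under the relation--predicate identification, and the two concluding claims are routine arity bookkeeping. Writing $\L_i:=\K\cup\{i\}$ for $i\notin\K$, I would first record the combinatorial setup: the $\L_i$ cover $\S$ since $\bigcup_i\L_i=\K\cup\K^c=\S$, and any two distinct ones overlap exactly in the key, $\L_i\cap\L_j=\K$. I would prove the identity by two inclusions. The inclusion $R\subseteq\join_i\pi_{\L_i}R$ uses only that the $\L_i$ form a cover: for $a\in R$ each restriction $a_{\L_i}$ lies in $\pi_{\L_i}R$, these restrictions all agree on the common overlap $\K$ (they equal $a_\K$ there), so their concatenation is defined and equals $a$, placing $a$ in the join.

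The reverse inclusion is where the key hypothesis does the work. Let $a$ belong to $\join_i\pi_{\L_i}R$. Then for each $i\notin\K$ there is a witness $b^i\in R$ with $b^i_{\L_i}=a_{\L_i}$; in particular $b^i_\K=a_\K$ for every $i$. Since $\K$ is a key, the functional dependency $\K\to\K^c$ forces any two tuples of $R$ that agree on $\K$ to agree on all of $\K^c$, hence to be equal. Thus all the witnesses collapse to a single tuple $b\in R$, and from $b_{\L_i}=a_{\L_i}$ I read off $b_\K=a_\K$ together with $b_i=a_i$ for each $i\notin\K$; as $\S=\K\cup\K^c$ this gives $b=a$, so $a\in R$. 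This proves \eqref{KeyRedRel}. (Alternatively, one can invoke the Join criterion, Theorem~\ref{JoinInd}: the same witness-collapsing argument supplies the common tuple required there, and the minimized factors of the resulting join are the projections $\pi_{\L_i}R$, as noted after Definition~\ref{JoinCov}.)

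Finally, each factor $\pi_{\K\cup\{i\}}R$ has arity $|\K\cup\{i\}|=k+1$, and there are $n-k$ of them. By Definition~\ref{JoinCov} this is a genuine join reduction exactly when $0<k+1<n$, i.e. when $k\leq n-2$ (for $k=n-1$ there is a single factor, equal to $R$, and nothing is gained), and the choice $k=1$ yields factors of arity $2$, a reduction to binaries. I expect the only real difficulty --- conceptual rather than technical --- to be the reverse inclusion: without a key the join generically contains spurious tuples and is strictly larger than $R$, and it is precisely the dependency $\K\to\K^c$ that eliminates them by forcing the matching subtuples to originate from one and the same tuple of $R$.
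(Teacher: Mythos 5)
Your proposal is correct and follows essentially the same route as the paper's proof: the forward inclusion is the trivial projection/concatenation argument, and the reverse inclusion collapses the witnesses $b^i\in R$ to a single tuple via the functional dependency $\K\to\K^c$, forcing $a\in R$. Your write-up merely makes explicit a step the paper leaves terse (that all $b^i$ coincide before concluding $a=b^1$), and the concluding arity bookkeeping is identical.
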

\begin{proof} Without loss of generality, we may rename the attributes into integers and assume $\S=\N_n$, $\K=\{1,\dots,k\}$. Suppose $a\in R$, then for any $i\geq k+1$ the subtuple $(a_1,\dots,a_k,a_i)\in R_i=\pi_{\K\cup\{i\}}R$ because it is the projection of $a$ to $\K\cup\{i\}$. Hence $a\in R_{k+1}\!\Join\dots\Join R_{n}$.

Now suppose $a\in R_{k+1}\!\Join\dots\Join R_{n}$, i.e. $(a_1,\dots,a_k,a_i)\in R_i$ for all $i\geq k+1$. By definition of $R_i$, there must be tuples $b^i\in R$ with $b^i_j=a_j$ for $j\leq k$ and $b_i^i=a_i$. But the first $k$ positions are a key, and all $b^i$ coincide on them with $a$, therefore, $a=b^1\in R$.
\end{proof}
\begin{example}\label{ExKeyRed} In the identity relation $I_n$ every position is a $1$-key. Taking $\K=\{1\}$ and applying \eqref{KeyRedPred}, we get the familiar reduction to binaries
\begin{equation*}
I_n(x_1,\dots,x_n)=I_2(x_1,x_n)\land\dots\land I_2(x_{n-1},x_n)
=\,(x_1=x_n)\land\dots\land(x_{n-1}=x_n).
\end{equation*}
\end{example}
\begin{example}
The division with remainder quaternary $\textup{Div}(n,d,q,r)$ ($n=dq+r$ with $n$ the dividend, $d\neq0$ the divisor, $q$ the quotient and $0\leq r\leq d-1$ the remainder) on $\N\cup\{0\}$ has a $2$-key consisting of the dividend and the divisor. Therefore, it reduces to the join of two ternaries: $\mathrm{Div}^{1,2,3}(n,d,q)=\,\big(q=\lfloor\frac{n}{d}\rfloor\big)$, where $\lfloor\cdot\rfloor$ is the floor function, and $\textup{Div}^{1,2,4}(n,d,r)=\big(n\equiv r\!\pmod d\big)$.
\end{example}

Functional dependency is not necessary for join reducibility, not even by join decompositions of the special form \eqref{KeyRedRel}-\eqref{KeyRedPred}. This is fortuitous because it is a rare occurrence, just like its polar opposite, Cartesian independence of values, and we could not join reduce many relations if it was required. 
\begin{example} Consider the ternary $R$ on $\D=\{\a,\b\}$ given by the table below.
\medskip

$R$ = \begin{tabular}{|C|C|C|}
\hline
 \a & \a & \a \\  \hline
 \a & \a & \b \\ \hline
 \a & \b & \a \\ \hline
 \a & \b & \b \\  \hline
 \b & \a & \b \\ \hline
\end{tabular}\ \ \ \ \ \ \ \ \ \ \ \ \
$R^{1,2}$ = \begin{tabular}{|C|C|}
\hline
 \a & \a \\  \hline
 \a & \b \\ \hline
 \b & \a \\ \hline
 \end{tabular}\ \ \ \ 
 $R^{1,3}$ = \begin{tabular}{|C|C|}
\hline
 \a & \a \\  \hline
 \a & \b \\ \hline
 \b & \b \\ \hline
 \end{tabular}
 \medskip
 
\noindent Since $\a$ is repeated in the first column the latter cannot be a key. Nonetheless, $R$ is a join of the form \eqref{KeyRedPred}: $R(x,y,z)=R^{1,2}(x,y)\land R^{1,3}(x,z)$, with the factors as shown above. This is because all possible combinations are present in $R$ in the second and third position after $\a$. As a result, $\s_{x_1=\a}R$ is a Cartesian product of unaries, and so is $\s_{x_1=\b}R$ over the {\it same} partition (trivially, as a singleton).
\end{example}
\noindent The above example illustrates a more general dependency introduced by Delobel, Fagin and Zaniolo around 1977 \cite[7.11]{Maier}, the {\it multivalued dependency}.
\begin{definition}\label{MultDep}
Let $\S=\M\cup\L_1\cup\dots\cup\L_m$ be a partition. We say that an $R\subseteq\D^\S$ has a multivalued dependency  $\M\twoheadrightarrow\L_1\cup\dots\cup\L_m$ when for all $\a\in\pi_\M R$ the selections $\s_{x_\M=\a}R$ are Cartesian products over the common partition $\L_1\cup\dots\cup\L_m$. When $\L_i$ are singletons for all $i$ then $\K:=\M$ is called a \textbf{multikey} of $R$, and, more precisely, a $\boldsymbol{k}\textbf{-multikey}$ when $k=|\K|$.
\end{definition}
\noindent Note that both functional dependency and Cartesian independence (with $\M=\emptyset$) are special cases of multivalued dependency. It turns out that such dependency is both necessary and sufficient for join decompositions of the special form \eqref{MultKeyRedRel}-\eqref{MultKeyRedPred}, when the same attributes are shared by all factors.
\begin{theorem}[\textbf{Fagin \cite{Fag77}}]\label{MultKeyRed} 
Let $\S=\M\cup\L_1\cup\dots\cup\L_m$ be a partition. An $R\subseteq\D^\S$ has a join decomposition of the form
\begin{equation}\label{MultKeyRedRel}
R=\join\limits_{i=1}^m\pi_{\M\cup\L_i}R
=R_1\!\Join\dots\Join R_m,
\end{equation}
where $R_i:=\pi_{\M\cup\L_i}R$, or, equivalently, 
\begin{equation}\label{MultKeyRedPred}
R(x_1,\dots,x_n)=R_1(x_\M\,|\,x_{\L_1})\land\dots\land R_m(x_\M\,|\,x_{\L_m}),
\end{equation}
if and only if $\M\twoheadrightarrow\L_1\cup\dots\cup\L_m$ is a multivalued dependency in $R$.
\end{theorem}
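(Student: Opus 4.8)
The plan is to reduce both implications to the independence criterion for Cartesian products (Theorem~\ref{CartInd}), applied to each selection $\s_{x_\M=\a}R$. The structural fact that makes this work is that for the particular cover $\S=\bigcup_i(\M\cup\L_i)$, every pairwise overlap $(\M\cup\L_i)\cap(\M\cup\L_j)$ with $i\neq j$ equals exactly $\M$, since the $\L_i$ are pairwise disjoint and disjoint from $\M$. Consequently the matching-on-overlaps condition in the definition of the join collapses to the single requirement that all factors carry a common value on $\M$: a tuple $a$ lies in $R_1\Join\dots\Join R_m$ iff each $a_{\M\cup\L_i}\in R_i$ and all of them share the same $\M$-value, which is then necessarily $a_\M$.

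For the forward direction, I would assume $R=R_1\Join\dots\Join R_m$ with $R_i=\pi_{\M\cup\L_i}R$, fix $\a\in\pi_\M R$, and verify the independence criterion for $\s_{x_\M=\a}R$ over the partition $\L_1\cup\dots\cup\L_m$ of $\M^c$. Given any collection $\b^i\in\pi_{\L_i}(\s_{x_\M=\a}R)$, each $\a\mid\b^i$ belongs to $R_i=\pi_{\M\cup\L_i}R$ by the definitions of selection and projection. Hence the concatenation $\a\mid\b^1\mid\dots\mid\b^m$ has all its $(\M\cup\L_i)$-subtuples in the respective $R_i$, and these agree on $\M$ (all equal to $\a$); by the collapsed matching condition it therefore lies in $R$, and its restriction to $\M^c$ is the common tuple demanded by Theorem~\ref{CartInd}. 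Thus every $\s_{x_\M=\a}R$ is a Cartesian product over $\L_1\cup\dots\cup\L_m$, which is exactly the multivalued dependency.

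For the converse, the inclusion $R\subseteq\join_i\pi_{\M\cup\L_i}R$ is automatic and uses no hypothesis: projecting any $a\in R$ to each $\M\cup\L_i$ lands in $R_i$, and the projections share the $\M$-value $a_\M$. The content is the reverse inclusion. I would take $a\in R_1\Join\dots\Join R_m$, set $\a:=a_\M$ (the forced common $\M$-value), and note that for each $i$ the membership $a_{\M\cup\L_i}\in\pi_{\M\cup\L_i}R$ gives $a_{\L_i}\in\pi_{\L_i}(\s_{x_\M=\a}R)$. By the hypothesized dependency, $\s_{x_\M=\a}R$ is a Cartesian product over the $\L_i$, so Theorem~\ref{CartInd} supplies a single tuple $c\in\s_{x_\M=\a}R$ with $c_{\L_i}=a_{\L_i}$ for every $i$. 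Since the $\L_i$ partition $\M^c$, this forces $c=a_{\M^c}$, and $c\in\s_{x_\M=\a}R$ unwinds to $\a\mid c=a\in R$, completing the inclusion.

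The equivalence of the relational form \eqref{MultKeyRedRel} and the predicate form \eqref{MultKeyRedPred} is merely the standard translation between join and conjunction recorded in \eqref{JoinConj}, so it needs no separate argument. I do not expect a genuine obstacle here: the proof is essentially bookkeeping in the attributed-relation calculus, and the only step demanding care is confirming that every overlap in this cover is precisely $\M$, since it is this observation that licenses replacing the full matching condition by agreement on $\M$ and lets the independence criterion be applied selection-by-selection.
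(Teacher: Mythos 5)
Your proof is correct and takes essentially the same route as the paper's: both directions come down to the single observation that fixing $x_\M=\a$ collapses the join over the cover $\{\M\cup\L_i\}$ to a Cartesian product of the selection $\s_{x_\M=\a}R$ over the partition $\{\L_i\}$. The only cosmetic difference is in the converse, where the paper assembles factors $R_i$ from the Cartesian factors $R_i^\a$ of the selections (and implicitly invokes the remark that join factors may be replaced by projections), while you verify the reverse inclusion for the projections $\pi_{\M\cup\L_i}R$ directly via the independence criterion; both are sound.
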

\begin{proof} One direction is trivial, if $R$ is of the form \eqref{MultKeyRedPred} then substituting $x_\M=\a$ turns it into a free conjunction since $\L_i$ are disjoint. For the other direction, let $\L:=\L_1\cup\dots\cup\L_m$ and note that for any $\a\in\pi_\M R$ we have 
\begin{equation}
\s_{x_\M=\a}R\,(x_\L)=R_1^\a(x_{\L_1})\land\dots\land R_m^\a(x_{\L_m}),
\end{equation}
with some $R_i^\a\subseteq\D^{\L_i}$, by definition of Cartesian product over a partition. It remains to set 
$R_i(a):=R_i^{a_\M}(a_{\L_i})$ when $a_\M\in\pi_\M R$ and $0$ (false) otherwise to get \eqref{MultKeyRedPred}.
\end{proof}
Multivalued dependency reductions are popular in database design due to their constructive nature, but they do not exhaust all possible join reductions. For example, the diversity relation can be join reduced even to binaries, but not in the form \eqref{MultKeyRedPred}:
\begin{equation}\label{DiverseJoin}
D_n(x_1,\dots,x_n)=\bigwedge\limits_{i<j} I_2(x_i,x_j)=\bigwedge\limits_{i<j} (x_i\neq x_j).
\end{equation}
However, the more complex a dependency the harder it is to detect and use to produce reductions. The definition of join dependency, for example, amounts to just saying that the relation is a join (but see \cite{MenMai} for a somewhat more cogent characterization).

Be it as it may, we will now show that even taking all possible join reductions into account there are still irreducible relations of arbitrarily high arities. To this end, we observe that the proof of Theorem \ref{DegTests} did not use the fact that $\L_i$ form a partition, and so goes through for join reductions without a change. Therefore, its conclusion can be strengthened.
\begin{theorem}\label{JoinIrredTests} Suppose one of the following conditions holds for $R\subseteq\D^\S$.

\textup{(i)} $R$ is not universal, but for any non-empty proper subset $\emptyset\subset\L\subset\S$ \par we have $\pi_{\L}R=\D^{\L}$. 

\textup{(ii)} $\neg R$ is not empty, but for any $i\in\S$ we have $\pi_i(\neg R)\neq\D$.

Then $R$ is join irreducible.
\end{theorem}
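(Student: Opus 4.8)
The plan is to observe that the proof of Theorem~\ref{DegTests} never used disjointness of the $\L_i$, only their properness together with the conjunctive (respectively disjunctive) structure of the relation, and both of these survive the passage from partitions to covers. The one point that needs care at the outset is that the defining identity \eqref{JoinConj} of a join is written in terms of arbitrary factors $R^{\L_i}$, whereas the hypotheses (i) and (ii) speak about the projections $\pi_{\L_i}R$. So the first step is to invoke the remark following \eqref{JoinConj}: since all and only the joinable tuples survive into the projections, \eqref{JoinConj} remains valid after replacing each $R^{\L_i}$ by $\pi_{\L_i}R$, giving
$$
R(x_1,\dots,x_n)=\pi_{\L_1}R\,(x_{\L_1})\land\dots\land\pi_{\L_m}R\,(x_{\L_m})
$$
whenever $R$ is a join over the cover $\S=\L_1\cup\dots\cup\L_m$. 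Join reducibility means $0<|\L_i|<|\S|$, so every $\L_i$ is a non-empty proper subset of $\S$ --- exactly the sets the hypotheses control.

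For case (i), I would argue by contradiction: if $R$ were join reducible, the displayed identity exhibits $R$ as a conjunction of the predicates $\pi_{\L_i}R$, each of which equals $\D^{\L_i}$ by hypothesis and is therefore identically true. Hence the conjunction is identically true and $R=U_n$, contradicting that $R$ is not universal.

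For case (ii), I would negate the same identity and apply de Morgan to obtain $\neg R=\bigvee_i\neg\pi_{\L_i}R$. Since $\neg R$ is non-empty, at least one disjunct is non-empty, say $\neg\pi_{\L_1}R$, and I pick $\a\in\neg\pi_{\L_1}R$. Because $\L_1$ is proper there is a position $i\notin\L_1$, and for an arbitrary $\delta\in\D$ I build a tuple $a$ with $a_{\L_1}=\a$, $a_i=\delta$, and the remaining coordinates assigned arbitrarily; this is consistent precisely because $i\notin\L_1$ (no disjointness of the $\L_i$ is needed). Then $\neg\pi_{\L_1}R(a_{\L_1})$ holds, so $\neg R(a)$ holds, whence $\delta\in\pi_i(\neg R)$; as $\delta$ was arbitrary this forces $\pi_i(\neg R)=\D$, contradicting the hypothesis. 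Both cases conclude that $R$ admits no join reduction.

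The argument is essentially a transcription of the proof of Theorem~\ref{DegTests}, so there is no genuine obstacle beyond the bookkeeping point already flagged: one must run the whole argument through the projections $\pi_{\L_i}R$ rather than the nominal join factors $R^{\L_i}$, since only the projections are pinned down by the hypotheses. It is worth emphasizing why disjointness is never invoked --- in (i) a conjunction of universal predicates is universal regardless of how the variable blocks overlap, and in (ii) the only structural fact used is the existence of a coordinate outside $\L_1$, which follows from properness alone.
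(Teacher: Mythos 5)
Your proof is correct and follows exactly the route the paper takes: the paper proves this theorem simply by remarking that the proof of Theorem~\ref{DegTests} never used disjointness of the $\L_i$ and so carries over verbatim to covers. Your additional bookkeeping point about running the argument through the projections $\pi_{\L_i}R$ (justified by the remark after \eqref{JoinConj}) is a sound and welcome clarification, but it does not change the substance of the argument.
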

\noindent The reasoning from Example \ref{DegNotIn} now shows that $\neg I_n$ and $\neg I_n^{\A}$ for non-empty proper $\A\subset\N_n$ are join irreducible for any $n\geq1$ and $|\D|\geq2$. Worse yet, join reductions leave asymptotically `almost all' relations irreducible, albeit not in as strong a sense as Cartesian factorizations.
\begin{theorem}\label{JoinRedLim} The share of join reducible $n$-ary relations among all such relations on a domain $\D$ is $<1$ for $|\D|>n$, and asymptotically vanishes when $n\geq1$ and $|\D|\to\infty$. 
\end{theorem}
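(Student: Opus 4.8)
The plan is to prove this by a counting argument in the spirit of Theorem~\ref{DegLim}, the extra ingredient being that a join reducible relation is completely determined by its projections onto the \emph{maximal} proper subsets of its scheme. I may assume $\S=\N_n$ with $n\geq2$, since a singleton has no nonempty proper subsets, so no unary relation is join reducible and for $n=1$ the share is $0$, trivially $<1$ and vanishing.

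First I would pin down a single canonical cover that witnesses reducibility. Suppose $R\subseteq\D^{\N_n}$ is join reducible, so $R=\join_{i}\pi_{\L_i}R$ over a cover by proper subsets $0<|\L_i|<n$, using that minimized join factors may be taken to be projections (as noted after Definition~\ref{JoinCov}). Set $\hR:=\join_{|\L|=n-1}\pi_{\L}R$, the join of the projections of $R$ onto all $(n-1)$-element subsets of $\N_n$. The inclusion $R\subseteq\hR$ is automatic, since each tuple of $R$ restricts consistently to every $(n-1)$-subset. For the reverse inclusion, take $a\in\hR$; every $\L_i$ is contained in some $(n-1)$-subset $\M_i$, and $a_{\M_i}\in\pi_{\M_i}R$ yields $a_{\L_i}\in\pi_{\L_i}R$ after a further projection, so (the pieces $a_{\L_i}$ being consistent because they all come from $a$) we get $a\in\join_i\pi_{\L_i}R=R$. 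Hence $R=\hR$, and every join reducible relation is recovered from the tuple of its projections $(\pi_{\L}R)_{|\L|=n-1}$.

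The count is then immediate. There are $\binom{n}{n-1}=n$ subsets $\L$ with $|\L|=n-1$, and each $\pi_{\L}R$ is one of at most $2^{|\D|^{n-1}}$ subsets of $\D^{\L}$, so the number $N_{red}$ of join reducible $n$-aries is bounded by the number of such tuples:
\[
N_{red}\;\leq\;\bigl(2^{|\D|^{n-1}}\bigr)^{n}\;=\;2^{\,n|\D|^{n-1}}.
\]
Dividing by the total number $2^{|\D|^n}$ of $n$-aries gives the share bound $2^{\,n|\D|^{n-1}-|\D|^n}=2^{\,|\D|^{n-1}(n-|\D|)}$. For $|\D|>n$ the exponent is negative, so the share is $<1$; and for fixed $n$, as $|\D|\to\infty$ the exponent grows like $-|\D|^{n}\to-\infty$, so the share vanishes.

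The only step requiring real work is the identity $R=\hR$: that reducibility over any cover of proper subsets collapses to reducibility over the single distinguished cover by all $(n-1)$-subsets, which is precisely what forces the relation to depend on merely $n$ small projections. I would also remark that, unlike Theorem~\ref{DegLim}, this bound says nothing as $n\to\infty$ with $|\D|$ fixed: there $n-|\D|$ eventually turns positive and the bound exceeds $1$, matching the theorem's deliberately weaker conclusion.
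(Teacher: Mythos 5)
Your proof is correct and follows essentially the same route as the paper's: both arguments reduce an arbitrary join reduction to one over the cover by all $n$ subsets of size $n-1$ and then bound the number of join reducible relations by $2^{n|\D|^{n-1}}$, yielding the same ratio $2^{-(|\D|-n)|\D|^{n-1}}$. The only (minor) difference is that you take the canonical factors $\pi_\L R$, so that a reducible relation is \emph{injectively} determined by its $n$ projections, whereas the paper pads and merges arbitrary factors to schemes of size $n-1$ and counts a surjection onto the reducible relations; the bound and conclusions are identical.
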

\begin{proof} 
Suppose $R$ is join reducible to a conjunction \eqref{JoinConj}. We can always interpret $R^{\L_i}$ as having attributes from any superset $\L_i'\supset\L_i$ by conjoining it with the universal relation on the scheme $\L_i'\-\L_i$. Since all $\L_i$ are proper subsets of $\N_n$ each of them is contained in one of $\L\subset\N_n$ with $|\L|=n-1$. Therefore, after merging predicates with identical schemes if necessary, we can represent $R$ in the form (see \cite{MenMai})
\begin{equation*}
R(x_1,\dots,x_n)=\bigwedge\limits_{|\L|=n-1}\!\!\!\!\! R^{\L}(x_\L)=\bigwedge\limits_{i=1}^n R^{\N_n\-\{i\}}\big(x_{\N_n\-\{i\}}\big).
\end{equation*}
There are $n$ such $\L$ and $|\P(\D^\L)|=2^{|\D|^{|\L|}}=2^{|\D|^{n-1}}$\!\! choices for each $R^{\L}$. Some of them may be incompatible, so we get an upper bound on the number of join reducible relations: $N_{jred}\leq2^{n|\D|^{n-1}}$. The conclusions now follow by applying calculus to the fraction 
\begin{equation}\label{Njred:Ntot}
\frac{2^{n|\D|^{n-1}}}{2^{|\D|^n}}=2^{-(|\D|-n)|\D|^{n-1}}.
\end{equation}
\end{proof}
\noindent Note that if $|\D|$ is fixed and $n\to\infty$ the ratio in \eqref{Njred:Ntot} goes to $\infty$, not to $0$, so we cannot conclude that increasing arity on a fixed domain also leads to a vanishing share of join reducible relations, as we could for degenerate ones in Theorem \ref{DegLim}. 

\section{Projective joins and hypostatic abstraction}\label{projoin}

As noted by Rissanen \cite{Riss77}, joins are the most general form of decomposition where the constituents can be recovered by taking projections. Yet they still admit irreducibles of arbitrarily high arity, and many of them. If we want a more manageable collection of irreducibles we need to relax the constructivism. There is also a more practical reason. When relations are entered into databases the expected data dependencies are often prescribed by design. What if potential entrants do not have them? They are then ``pre-treated" before incorporation to mend that. For example, full names are expected to function as relation keys, but identical namesakes do occur sometimes. This is mended by attaching ID columns to the tables that restore  uniqueness (and hence, functional dependency).

Such pre-treatment is often done ``under the desk", but it is of interest to develop its devices systematically. Attaching a key appears already in C.S. Peirce's works on relations from 1890s under the name of ``hypostatic abstraction" \cite{Bur91,Bur97}. Consider the ternary relation $G(x,y,z):=$``$x$ gives $y$ to $z$". To reduce it to binaries, Peirce introduces (``hypostatizes") new abstracted objects $t$, the acts of giving, and treats $x$, $y$ and $z$ as their attributes, the giver, the gift, and the recipient. If we denote their relations to the object by $G'(t,x)$, $G''(t,y)$ and $G'''(t,z)$, respectively, then we can form a key-form join $G'(t,x)\land G''(t,y)\land G'''(t,z)$ that carries all the information of $G(x,y,z)$. Except it also features the acts of giving $t$ that are not among the original attributes. What we really need to say is that {\it there is} such an act for given $x$, $y$ and $z$:
\beq\label{give}
G(x,y,z)=\exists t\left[G'(t,x)\land G''(t,y)\land G'''(t,z)\right].
\eeq
This is a reduction of sorts, but it is no longer a join, rather a projection of a join. Its factors cannot be recovered from $G$ even if we know their relation schemes because of the freedom in $t$ assignments. But it does fit well with the algebraic ideology of joins: first, we represent a given $R$ as a projection of a higher arity relation $\hR$, and then factor $\hR$ into a join. While the factors are not projections of $R$, they are projections of $\hR$, as is $R$ itself.
\begin{definition}\label{ProJoinDef}
The \textbf{projective join}, or just \textbf{projoin}, of attributed relations $R_i\subseteq\D^{\L_i}$ with the set of projected attributes $\Gamma\subseteq\cup_i\L_i$ is $\pi_\Gamma\left[R_1\Join\dots\Join R_m\right]$, i.e. the projection of their (natural) join to $\Gamma$.
A relation $R\subseteq\D^\S$ is a \textbf{projoin over a cover} $\T\cup\S=\L_1\cup\dots\cup\L_m$ with $\T\cap\S=\emptyset$ when there exist $R^{\L_i}\subseteq\D^{\L_i}$, called its \textbf{projoin factors}, such that 
$$
R=\pi_{\S}\!\left[R^{\L_1}\Join\dots\Join R^{\L_m}\right]=\pi_{\S}\hR.
$$
The join $\hR$ is called the \textbf{augmented relation}, and elements of $\T$ (projoin) \textbf{parameters}. $R$ is called \textbf{projoin reducible} when it is a projoin with $0<|\L_i|<|\S|$.
\end{definition}
\noindent The corresponding predicate formula for projoins is:
\begin{equation}\label{ProJoinConj}
R(x_1,\dots,x_n)=\exists x_\T\left[R^{\L_1}(x_{\L_1})\land\dots\land R^{\L_m}(x_{\L_m})\right].
\end{equation} 
It is more typical to consider reductions in the algebra with two separate operations, join and projection, and project-join expressions are studied in \cite{DunMik,YanPap} and are part of a link between the relational model and Tarski's cylindric algebras  \cite{Buss,ImLip}. In terms of predicates, passing from joins to projoins means that in reductions we allow existential quantification on top of conjunction and identification of variables, i.e. consider primitive positive expressions (assuming the binary identity is among the available predicates). Up to switching from attributed to ordinary relations, the projoin reduction is essentially equivalent to reduction in the relational (co-clone) algebra of clone theory \cite{CorPos04,Lau,PosKal}. As such, it plays a key role in characterizing complexity of constraint satisfaction problems (CSP), and the study of generating sets of small arity in connection with them can be applied to solving Peircian reduction problems on finite domains \cite{BRSV05}.

Just as join dependencies can be expressed using joins, generalized data dependencies can be expressed using project-join equations \cite{YanPap}. With projoins, one can use the flexibility of augmenting the relation to make the decomposition methods for joins more broadly applicable. The simplest method used a key, but whether a relation has a key is, in some ways, a bookkeeping matter. This is another reason for moving from joins to projoins.
\begin{definition}\label{KeyAdm}
We say that a relation $R\subseteq\D^n$ \textbf{admits a key (multikey)} when it is a projection of a relation $\hR\subseteq\D^{\T\cup\S}$ with a key (multikey), i.e. $R=\pi_{\S}\hR$. 
\end{definition}
\noindent Admitting a key is less of a bookkeeping matter than already having it. Note that $\T$ need not be the key in itself, the key may combine $\T$ with some attributes from the original relation. However, whether $R$ admits a key or not does not depend on which attributes are used in it. 
\begin{lemma}\label{KeyAdmCrit} A relation $R\subseteq\D^\S$ admits a $k$-key if and only if $|R|\leq|\D|^k$.
\end{lemma}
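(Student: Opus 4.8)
The plan is to prove the two implications separately: necessity is a counting property of keys, and sufficiency is an explicit construction in the spirit of hypostatic abstraction.

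For necessity, I would start from $R=\pi_\S\hR$ with $\hR\subseteq\D^{\T\cup\S}$ possessing a $k$-key $\K$, $|\K|=k$. The defining functional dependency $\K\to\K^c$ forces any two tuples of $\hR$ agreeing on $\K$ to coincide entirely, so the map $a\mapsto a_\K$ embeds $\hR$ into $\D^\K$ and hence $|\hR|\le|\D^\K|=|\D|^k$. Since the projection $a\mapsto a_\S$ maps $\hR$ onto $R$, we have $|R|\le|\hR|$, and chaining the two inequalities yields $|R|\le|\D|^k$.

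For sufficiency I would reverse this reasoning constructively. Assuming $|R|\le|\D|^k$, I pick a fresh attribute set $\T$ disjoint from $\S$ with $|\T|=k$, so that $|\D^\T|=|\D|^k\ge|R|$ and there exists an injection $\phi\colon R\to\D^\T$. I would then tag every tuple $a\in R$ with its distinct label, setting $\hat a:=\phi(a)\mid a\in\D^{\T\cup\S}$ (the concatenation is defined since $\T\cap\S=\emptyset$) and $\hR:=\{\hat a:a\in R\}$. By construction $\pi_\S\hR=R$, and $\T$ is a $k$-key of $\hR$: two tuples of $\hR$ agreeing on $\T$ have equal labels, hence by injectivity of $\phi$ equal $\S$-parts, hence are equal. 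This exhibits $R$ as admitting a $k$-key.

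I do not expect a real obstacle here; the proof is short once one sees that both directions hinge on the same bookkeeping between key values and tuples. The only point to state carefully is that the cardinality bound $|R|\le|\D|^k$ is precisely what permits an injective labeling of $R$ by $k$-tuples over $\D$, which is the combinatorial content of attaching a size-$k$ key; this also confirms the remark after Definition \ref{KeyAdm} that admissibility is independent of the particular attributes chosen to carry the key.
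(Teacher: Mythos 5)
Your proof is correct and follows essentially the same route as the paper's: for necessity, injectivity of the key projection bounds $|\hR|$ by $|\D|^k$ and the projection onto $\S$ can only shrink cardinality; for sufficiency, an injective labeling of tuples by fresh $k$-tuples builds the augmented relation with $\T$ as its key. No gaps.
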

\begin{proof} 
Suppose $R=\pi_\T\hR$ and $\K\subseteq\T\cup\S$ is a key of $\hR$ with $|\K|=k$. Since there are at most $|\D|^k$ distinct tuples in the $\K$ positions of $\hR$, and they determine the remaining values, $|\hR|\leq|\D|^k$. But projection never increases the cardinality of a relation, so $|R|=|\pi_{\S}\hR|\leq|\hR|\leq|\D|^k$.

Conversely, suppose $|R|\leq|\D|^k$. Then we can pick any $\T$ of cardinality $k$ with unused attributes, and assign to every tuple in $a\in R$ a unique label $t(a)\in \D^\T$. We define $\hR$ as the set of augmented tuples $\hR:=\{(t(a),a)\,|\,a\in R\}$. By construction, the first $k$ positions of $\hR$ are its key and $\pi_{\S}\hR=R$.
\end{proof}
\noindent Recall from the previous section that relations {\it with} a $1$-key are quite rare on finite domains. A striking consequence of this lemma is that on infinite domains, any relation {\it admits} a $1$-key. Indeed, by the cardinal arithmetic, $|D|^n=|\D|$ for any finite $n$, so $|R|\leq|\D|^n=|\D|$ for any $n$-ary relation on $\D$. The next theorem generalizes Peirce's trick \eqref{give} for the relation of giving and connects it to relation keys, see also \cite{Bur91} for the general case.
\begin{theorem}[\textbf{Hypostatic abstraction}]\label{ProKeyRed} 
Any $R\subseteq\D^\S$ with $|R|\leq|D|^k$ decomposes into a projoin of $(k+1)$-aries as 
\begin{equation}\label{ProKeyRedRel}
R=\pi_{\S}\!\left[\join\limits_{i=1}^n\pi_{\T\cup\{i\}}\hR\right]
=\pi_{\S}\!\left[R_1\!\Join\dots\Join R_n\right],
\end{equation}
where $R_i:=\pi_{\T\cup\{i\}}\hR$ for some $\hR\subseteq\D^{\T\cup\S}$ with $|\T|=k$. In the predicate form,
\begin{equation}\label{ProKeyRedPred}
R(x_1,\dots,x_n)=\exists t_1\dots\exists t_k\left[R_1(t_1,\dots,t_k, x_1)\land\dots\land R_n(t_1,\dots,t_k, x_n)\right].
\end{equation}
In particular, $R$ is projoin reducible when $k\leq n-2$, and projoin reducible to binaries when $k=1$.
\end{theorem}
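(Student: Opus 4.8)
The plan is to reduce this statement to the two results already in hand: the key-admission criterion (Lemma \ref{KeyAdmCrit}) and the join reduction by a key (Theorem \ref{KeyRed}). The guiding idea is that hypostatic abstraction is nothing but a key-form join reduction carried out on an \emph{augmented} relation, followed by projecting the parameters back out. So I would split the argument into producing the augmentation, join-reducing it, and then projecting.

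First I would produce the augmented relation. Identifying $\S$ with $\N_n$, the hypothesis $|R|\le|\D|^k$ is exactly the condition of Lemma \ref{KeyAdmCrit}, and the construction in its proof furnishes a set $\T$ of $k$ fresh attributes disjoint from $\S$ together with an injective labeling $t\colon R\hookrightarrow\D^\T$ (available since $|R|\le|\D|^k=|\D^\T|$), for which $\hR:=\{\,(t(a)\mid a)\mid a\in R\,\}\subseteq\D^{\T\cup\S}$ satisfies $\pi_\S\hR=R$ and has $\T$ as a $k$-key. It matters here that I take $\T$ \emph{itself} to be the key, rather than some mixture of parameters and original attributes, so that the join factors produced below all share exactly the parameter columns.

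Next I would apply Theorem \ref{KeyRed} to $\hR$ with $\K=\T$. Since $\S$ is non-empty, $\T$ is a proper subset of $\T\cup\S$, and the theorem yields $\hR=\join_{i\in\S}\pi_{\T\cup\{i\}}\hR=R_1\Join\dots\Join R_n$ with $R_i:=\pi_{\T\cup\{i\}}\hR$, or in predicate form (equation \eqref{KeyRedPred} applied to $\hR$) $\hR(t_1,\dots,t_k,x_1,\dots,x_n)=\bigwedge_{i=1}^n R_i(t_1,\dots,t_k,x_i)$. Projecting both sides to $\S$ and expressing $\pi_\S\hR$ as existential quantification over the parameter attributes $x_\T=(t_1,\dots,t_k)$ gives $R=\pi_\S[R_1\Join\dots\Join R_n]$, which is \eqref{ProKeyRedRel}, and simultaneously the predicate identity \eqref{ProKeyRedPred}. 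Each factor $R_i$ has scheme $\T\cup\{i\}$, hence arity $k+1$, establishing the $(k+1)$-ary claim.

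For the final assertions I would simply count arities against Definition \ref{ProJoinDef}: the factors have arity $k+1$, so they are proper projoin factors (that is, $0<|\L_i|<|\S|=n$) precisely when $k+1<n$, i.e. $k\le n-2$, giving projoin reducibility in that range, and when $k=1$ the factors are binaries. Honestly, there is no deep obstacle here, as the entire content has been pre-loaded into Lemma \ref{KeyAdmCrit} and Theorem \ref{KeyRed}. The only points demanding care are bookkeeping ones: choosing $\T$ to be exactly the key so that the parameters $t_1,\dots,t_k$ are common to every factor, ensuring $\T$ is disjoint from $\S$ so that the concatenations and the cover $\T\cup\S$ are well-formed, and correctly converting the projection $\pi_\S$ into the quantifier block $\exists t_1\dots\exists t_k$ when passing from the join of $\hR$ to the projoin of $R$.
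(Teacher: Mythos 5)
Your proposal is correct and follows exactly the paper's own route: construct $\hR$ via the labeling from the proof of Lemma \ref{KeyAdmCrit} so that $\T$ itself is a $k$-key, apply Theorem \ref{KeyRed} to $\hR$, and project out $\T$. You merely spell out the bookkeeping (the injective labeling, the conversion of $\pi_\S$ into the quantifier block, the arity count) that the paper leaves implicit.
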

\begin{proof} The augmented relation $\hR\subseteq\D^{\T\cup\S}$ is constructed in the proof of Lemma \ref{KeyAdmCrit}. Since the augmented attributes $\T$ are its $k$-key, by construction, the decomposition formula \eqref{ProKeyRedRel} follows from Theorem \ref{KeyRed}. In \eqref{ProKeyRedPred} we specialized to $\T=\{t_1,\dots,t_k\}$ for concreteness, and ordered $\T\cup\S$ so that the $\T$ positions go before the original ones.
\end{proof}
\noindent Hypostatic abstraction radically simplifies the picture of reducibility on infinite domains, and delivers a ``small" set of irreducibles that eluded us with Cartesian products and joins. As we mentioned, this is similar to factorization of polynomials over $\R$, where the only irreducible ones are linear and quadratic. 
\begin{corollary}\label{InfDomProRed} 
Any $n$-ary with $n\geq3$ on an infinite domain reduces to a projoin of $n$ binaries. The only projoin irreducible relations on such domains are all unaries and non-degenerate binaries.
\end{corollary}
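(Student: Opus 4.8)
The plan is to read off both assertions from the hypostatic abstraction theorem together with one cardinality observation. For the first sentence, I would begin from the remark already made before the statement: on an infinite domain cardinal arithmetic gives $|\D|^n=|\D|$ for every finite $n$, so any $n$-ary $R\subseteq\D^\S$ satisfies $|R|\leq|\D^n|=|\D|=|\D|^1$. By Lemma \ref{KeyAdmCrit} with $k=1$ such an $R$ admits a $1$-key, and Theorem \ref{ProKeyRed} with $k=1$ then produces the reduction of $R$ into the $n$ binaries $R_i=\pi_{\T\cup\{i\}}\hR$. Since $n\geq3$ forces $0<1<n$, each factor has strictly smaller arity, so this is a genuine projoin reduction.

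For the characterization of irreducibles I would verify the two directions separately. The reducibility direction is immediate from the preceding material: every $n$-ary with $n\geq3$ is reducible by the first part, while every degenerate binary is by Definition \ref{CartPart} a Cartesian product $R^{\{1\}}\times R^{\{2\}}$ of two unaries, hence a join (a projoin with empty parameter set) over the partition $\{1\}\cup\{2\}$ with factors of arity $1<2$. Irreducibility of unaries is trivial, since any projoin reduction would demand factors of arity strictly between $0$ and $1$.

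The crux, and the step I expect to need the most care, is the irreducibility of non-degenerate binaries, since there the only admissible factors have arity $1$ and I must show that no projoin of unaries projected onto a two-element scheme can be non-degenerate. The plan is to establish the structural fact that a join of unary relations is always itself a Cartesian product of unaries: grouping the factors by their single attribute and intersecting those sharing an attribute yields $R^{\L_1}\Join\dots\Join R^{\L_m}=\bigtimes_{j}U_j$, where $U_j$ is the intersection of the unary factors on attribute $j$ (the cover condition guarantees every attribute of $\S$ carries at least one factor). Projecting to the binary scheme $\S$ then gives $U_1\times U_2$ when the parameter factors are all nonempty, and the relation $\emptyset_2$ otherwise; in every case the outcome is a Cartesian product over the partition $\{1\}\cup\{2\}$, i.e. a degenerate binary. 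Consequently a non-degenerate binary cannot arise as a projoin of unaries and is projoin irreducible, which closes the characterization.

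The only delicate points in carrying this out are the bookkeeping of overlaps in the join (here harmless because all factor schemes are singletons, so consistency on overlaps reduces to membership in the appropriate intersection) and the edge cases in which a projected Cartesian factor is empty or universal; both are absorbed into the observation that $\emptyset_2$, $\D\times U_2$, and $U_1\times\D$ are all degenerate, so none of them is the non-degenerate binary under consideration.
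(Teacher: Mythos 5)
Your proposal is correct and follows essentially the same route as the paper: cardinal arithmetic plus Lemma \ref{KeyAdmCrit} and Theorem \ref{ProKeyRed} for the reducibility claim, and for irreducibility the observation that a projoin of unaries collapses to a free conjunction (Cartesian product) of unaries, so it can only yield degenerate binaries. Your set-theoretic phrasing (grouping factors by attribute, intersecting, then projecting out parameters) is just a more explicit rendering of the paper's predicate-level argument, with the empty/universal edge cases handled harmlessly.
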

\begin{proof} The first claim is a direct consequence of cardinal arithmetic for infinite $|\D|$ and Theorem \ref{ProKeyRed}. Unaries have nothing to be reduced to. Binaries can only be reduced to unaries. But quantifying over a conjunction of unary predicates still produces a conjunction of unary predicates. Since unaries have only one variable each it is a free conjunction, i.e. an unaric Cartesian product. Thus, projoin reducible binaries must be degenerate.
\end{proof}
\noindent L\"owenheim might have been the first to state and prove this result in a more formal manner in 1915 \cite{Low}. His proof used somewhat more cumbersome iterated pairing construction instead of hypostatic abstraction. In fact, the result is even stronger than stated. Recall that projoins are ranked by the number of parameters, with joins having none, $1$-key hypostatic reductions like \eqref{give} having one, and so on. It follows from the proof that all higher arity relations are not only projoin reducible on infinite domains, but even projoin reducible with a single parameter. This is not the case on finite domains, as we will now demonstrate.
\begin{example}\label{NotI3ProIrred} Consider $\neg I_n$ on a finite domain $\D$. If it is projoin reducible then factors without augmented attributes can be dropped. Indeed, in the predicate representation \eqref{ProJoinConj} those factors can be taken out of the scope of quantifiers, and each carries a proper subset of $\neg I_n$'s variables. But $\neg I_n$ is universal on any proper subset of its positions, and hence so must be those factors since they enter conjunctively. Lower arity factors can be absorbed into those of arity $n-1$, so for $n=3$ any projoin reduction with one parameter condenses to just this
\beq\label{NotI3ProJoin}
\neg I_3(x_1,x_2,x_3)=\exists t\left[R_1(t,x_1)\land R_2(t,x_2)\land R_3(t,x_3)\right].
\eeq

We will now restrict to a two-element domain $\D:=\{\a,\b\}$. Then $t$ can only take two values, and, recalling the interpretation of the existential quantifier, \eqref{NotI3ProJoin} represents $\neg I_3$ as a disjunction of two unary conjunctions, i.e. a union of two unary Cartesian products. Possible cardinalities of such products on a two-element domain are $1,2,4$ and $8$. But $8$ is impossible because $|\neg I_3|=6<8$, and $4$ comes from a product of two doublets and a singleton. But a product of two doublets will include both $(\a,\a)$ and $(\b,\b)$, so multiplying it by any singleton will produce a constant tuple not in $\neg I_3$. So $4$ is also impossible. As for $1$ and $2$, no pair of them can cover all $6$ tuples of $\neg I_3$. 

Thus, on two-element domains $\neg I_3$ is not projoin reducible with only one parameter. A similar counting argument works also for $|\D|=3$. For larger $|\D|$ combinatorial considerations of this sort quickly become unmanageable. For $n>3$ even single parameter projoin reductions are no longer of the simple form \eqref{NotI3ProJoin}, as free variables can be shared and $\neg I_n$ need not be a union of Cartesian products, see Example \ref{TriPro2Irred}.
\end{example}
A counting argument similar to that of Theorem \ref{JoinRedLim} further shows that the reducibility behavior on finite domains is quite different.
\begin{theorem}\label{ProJoinRedLim} The share of $n$-ary, $n\geq3$, relations projoin reducible with $k$ parameters among all such relations on a domain $\D$ is $<1$ for $|\D|>\binom{n+k}{n-1}$, and asymptotically vanishes when $|\D|\to\infty$. 
\end{theorem}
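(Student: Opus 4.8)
The plan is to mirror the counting argument of Theorem \ref{JoinRedLim}, the only new ingredient being that the augmented relation now lives on the enlarged scheme $\T\cup\S$ of size $n+k$ rather than on $\S$ itself. I would fix once and for all a parameter set $\T$ with $|\T|=k$ disjoint from $\S$, since relations on $\S$ obtained from different parameter sets coincide up to renaming, so fixing $\T$ costs nothing in the count. By Definition \ref{ProJoinDef}, a relation $R\subseteq\D^\S$ that is projoin reducible with $k$ parameters can be written as $R=\pi_\S\hR$, where $\hR=R^{\L_1}\Join\dots\Join R^{\L_m}$ is a join over a cover $\T\cup\S=\L_1\cup\dots\cup\L_m$ with every factor of arity $0<|\L_i|<n$. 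Thus $R$ is completely determined by $\hR$ through the map $\hR\mapsto\pi_\S\hR$, and it suffices to bound the number of admissible $\hR$.

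First I would normalize $\hR$ exactly as in the proof of Theorem \ref{JoinRedLim}. Since $|\L_i|<n$, each $\L_i$ is contained in some $(n-1)$-element subset $\L\subseteq\T\cup\S$; replacing $R^{\L_i}$ by its join with the universal relation on $\L\-\L_i$ enlarges its scheme to $\L$ without altering the join, because conjoining with a universal factor is the identity operation. After this padding all factors have arity exactly $n-1$, and factors sharing a common scheme may be merged into their intersection (the join of two relations on the same scheme). The outcome is a representation of $\hR$ as a conjunction $\bigwedge_{|\L|=n-1}R^{\L}(x_\L)$ indexed by the $(n-1)$-element subsets of $\T\cup\S$, where any subset not originally present is filled with the universal relation. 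There are $\binom{n+k}{n-1}$ such subsets $\L$, and each $R^\L$ ranges over $\P(\D^\L)$, offering $2^{|\D|^{n-1}}$ choices.

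This yields the upper bound $N\leq2^{\binom{n+k}{n-1}|\D|^{n-1}}$ on the number of $n$-aries projoin reducible with $k$ parameters (some choices coincide or are incompatible, so the bound need not be tight). Dividing by the total $|\P(\D^n)|=2^{|\D|^n}$ gives
\begin{equation*}
\frac{N}{2^{|\D|^n}}\leq 2^{\binom{n+k}{n-1}|\D|^{n-1}-|\D|^n}=2^{-|\D|^{n-1}\left(|\D|-\binom{n+k}{n-1}\right)}.
\end{equation*}
The exponent is negative precisely when $|\D|>\binom{n+k}{n-1}$, forcing the share below $1$, and it tends to $-\infty$ as $|\D|\to\infty$ with $n,k$ fixed, which gives the asymptotic vanishing.

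The step I expect to require the most care is the normalization: one must verify that padding by universal factors and merging same-scheme factors genuinely preserve $\hR$, and hence its projection $\pi_\S\hR=R$, and that after filling missing schemes with universal relations the resulting conjunction is indexed by all $\binom{n+k}{n-1}$ subsets so the count is exactly the stated binomial. The remaining estimate is the routine calculus already carried out in Theorem \ref{JoinRedLim}.
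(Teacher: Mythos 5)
Your proposal is correct and follows essentially the same route as the paper, which likewise pads all factors to arity $n-1$ on the enlarged scheme $\T\cup\S$, counts the $\binom{n+k}{n-1}$ possible factor schemes with $2^{|\D|^{n-1}}$ choices each, and compares against the total $2^{|\D|^n}$ to get the bound $2^{-\left(|\D|-\binom{n+k}{n-1}\right)|\D|^{n-1}}$. You in fact supply more detail (fixing $\T$, justifying the padding and merging) than the paper's brief sketch, which simply defers to the argument of Theorem \ref{JoinRedLim}.
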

\begin{proof} As in the proof of Theorem \ref{JoinRedLim}, we transform the projoin into a form with factors of maximal possible arity in a reduction, $n-1$. Only this time, due to the augmented positions, their total number is $\binom{n+k}{n-1}$ rather than $n$ (to which it reduces for $k=0$). The rest of the proof is analogous, and leads to $2^{-\left(|\D|-\binom{n+k}{n-1}\right)\,|\D|^{n-1}}$\!\! as the upper bound for the share.
\end{proof}
What are we to make of such a striking discrepancy between reducibilities on finite and infinite domains? The universal applicability of hypostatic abstraction on infinite domains derives directly from non-constructive and `unnatural' bijections between $\D$ and its Cartesian powers. By a theorem of Tarski \cite[11.3]{Jech}, $|\D|^2=|\D|$ for all infinite $\D$ is equivalent to the axiom of choice. Perhaps, it is of interest to consider reduction in models of set theory where availability of such bijections is restricted and the $|\D|,|\D|^2,\dots$ hierarchy does not collapse, such as ZF models with Dedekind finite sets, or to restrict maps allowed in definitions of reducing relations directly. These may model relations on large finite domains better than do all relations on infinite domains of ZFC.

\section{Projections as unions}\label{finprojoin}

As the results of the previous section show, reductions more general than \eqref{ProKeyRedRel}-\eqref{ProKeyRedPred} are still of interest on finite domains. Indeed, they may be of interest even on infinite domains as analyses of relations alternative to hypostatic abstraction. In this section we build on the relationship between existential quantification (projection) and unions exploited in Example \ref{NotI3ProIrred} to construct such reductions. It is more technical than the rest of the paper and can be skipped without loss of continuity. We start by giving a projective version of Fagin's theorem.
\begin{theorem}\label{ProMultKeyRed} 
An $R\subseteq\D^\S$ has a projoin decomposition of the form: \begin{equation}\label{ProMultKeyRedRel}
R=\pi_{\S}\!\left[\join\limits_{i=1}^m R_i\right]
=\pi_{\S}\!\left[R_1\!\Join\dots\Join R_m\right],
\end{equation}
where $R_i\subseteq\D^{\T\cup\L_i}$ for some partition $\S=\L_1\cup\dots\cup\L_m$, or, in the predicate form,
\begin{equation}\label{ProMultKeyRedPred}
R(x_1,\dots,x_n)=\exists t_1\dots\exists t_k\left[R_1(t_1,\dots,t_k, x_{\L_1})\land\dots\land R_n(t_1,\dots,t_k, x_{\L_m})\right],
\end{equation}
if and only if it is a union of Cartesian products over the common partition $\L_i$ with no more than $|\D|^{|\T|}$ terms.
\end{theorem}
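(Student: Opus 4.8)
The plan is to exploit the correspondence between existential quantification and disjunction over a finite domain, exactly as in Example \ref{NotI3ProIrred}. Writing $k:=|\T|$, the quantifier $\exists x_\T$ ranges over the finite set $\D^\T$ of cardinality $|\D|^k$, so the projoin formula \eqref{ProMultKeyRedPred} is literally a disjunction
\beq
R(x)=\bigvee_{\tau\in\D^\T}\big[R_1(\tau\mid x_{\L_1})\land\dots\land R_m(\tau\mid x_{\L_m})\big]
\eeq
of $|\D|^k$ terms indexed by the parameter values $\tau\in\D^\T$. I would prove both implications by reading this single identity in the two directions.

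For the forward direction, I fix $\tau\in\D^\T$ and observe that each conjunct $R_i(\tau\mid x_{\L_i})$, viewed as a predicate in $x_{\L_i}$ alone, is just the selection $\s_{x_\T=\tau}R_i$, a relation on the scheme $\L_i$. Since the $\L_i$ form a partition of $\S$, these schemes are pairwise disjoint, so for each fixed $\tau$ the bracketed conjunction is a \emph{free} conjunction of the shape \eqref{FreeConj}, and hence a Cartesian product over the common partition $\L_1\cup\dots\cup\L_m$ by Definition \ref{CartPart}. Thus $R$ is the union of these Cartesian products as $\tau$ ranges over $\D^\T$, producing at most $|\D|^{|\T|}$ terms, as required.

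For the converse, suppose $R=\bigcup_{j=1}^N P_j$ with each $P_j=P_j^{\L_1}\times\dots\times P_j^{\L_m}$ a Cartesian product over the common partition and $N\leq|\D|^{k}$. Since $N$ does not exceed the number of available parameter tuples, I can choose distinct labels $\tau_1,\dots,\tau_N\in\D^\T$ for the terms and define the factors by
\beq
R_i(\tau\mid x_{\L_i}):=
\begin{cases}
P_j^{\L_i}(x_{\L_i}) & \text{if }\tau=\tau_j\text{ for some }j\le N,\\
\text{false} & \text{otherwise.}
\end{cases}
\eeq
Feeding these $R_i$ back into the displayed disjunction, the term for $\tau=\tau_j$ collapses to $P_j^{\L_1}(x_{\L_1})\land\dots\land P_j^{\L_m}(x_{\L_m})=P_j(x)$, because all $m$ factors read off the same label $\tau_j$ and hence the same index $j$; every term for a $\tau$ outside $\{\tau_1,\dots,\tau_N\}$ is identically false. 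The disjunction therefore reproduces $\bigcup_{j=1}^N P_j=R$, which is precisely \eqref{ProMultKeyRedPred}, equivalently \eqref{ProMultKeyRedRel}.

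The argument is short, and the only point demanding care is the bookkeeping in the converse: the hypothesis $N\leq|\D|^{|\T|}$ is exactly what allows the distinct labels $\tau_j$ to fit inside $\D^\T$, and the disjointness of the $\L_i$ is what guarantees that fixing $x_\T=\tau_j$ isolates the single product $P_j$ without factors interfering across schemes. I expect no genuine obstacle beyond checking that the ``false'' branch really annihilates the surplus parameter values and that one and the same partition $\L_1\cup\dots\cup\L_m$ is carried through both directions.
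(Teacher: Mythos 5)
Your proof is correct, and both directions go through: the quantifier-to-disjunction identity over $\D^\T$ is exactly the right engine, the disjointness of the $\L_i$ makes each disjunct a free conjunction, and the hypothesis $N\leq|\D|^{|\T|}$ is precisely what lets you inject the term labels into $\D^\T$ in the converse. The route differs from the paper's in one structural respect: the paper deliberately factors the argument through Fagin's theorem (Theorem \ref{MultKeyRed}). In the forward direction it invokes Fagin to conclude that the augmented join $\hR$ has the multivalued dependency $\T\twoheadrightarrow\L_1\cup\dots\cup\L_m$, so that the selections $\s_{x_\T=a}\hR$ are Cartesian products; in the converse it builds $\hR$ from the labeled union and applies Fagin again to recover the join decomposition. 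You instead inline both halves: your observation that fixing $\tau$ yields a free conjunction is the "trivial direction" of Fagin's theorem, and your case-defined factors $R_i(\tau\mid x_{\L_i})$ reproduce verbatim the construction $R_i(a):=R_i^{a_\M}(a_{\L_i})$, else false, from the paper's proof of Theorem \ref{MultKeyRed}. What you gain is a self-contained, arguably more transparent argument that never mentions multivalued dependencies; what the paper's version buys is the explicit framing of this result as the "projective version of Fagin's theorem," exhibiting the multivalued dependency of the augmented relation as the structural reason behind the decomposition. One cosmetic caveat: you write the union as $\bigcup_{j=1}^N$ with an integer bound, whereas the statement (and the paper's proof) permits $|\D|$, hence $|\D|^{|\T|}$, to be an infinite cardinal; your labeling argument survives unchanged if you index the terms by an arbitrary set injecting into $\D^\T$ rather than by $\{1,\dots,N\}$.
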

\begin{proof} Suppose $R=\pi_{\S}\hR$ with $\hR=\join_{i=1}^m R_i$. Then, by Fagin's theorem, there is a multivalued dependency $\T\twoheadrightarrow\L_1\cup\dots\cup\L_m$ in $\hR$. But then, by definition, for all $a\in\pi_T\hR$ the selections $\s_{x_\T=a}R$ are Cartesian products over $\L_i$, and 
$$
R=\bigcup_{a\in\pi_T\hR}\s_{x_\T=a}R\,.
$$
Since $\pi_T\hR\subseteq\D^{\T}$ the number of terms in the union is no more than $|\D^{\T}|=|\D|^{|\T|}$.

Conversely, suppose $R$ is a union of Cartesian products over $\L_i$ with no more than $|\D|^{|\T|}$ terms. Select a distinct $a\in\D^{\T}$ for each term and label its term $R^a$. Then define
$$
\hR:=\bigcup_{a}\,\,\{(a,x)\,|\,x\in R^a\}.
$$
By construction, $R=\bigcup_{a}R^a$, and since $R^a$ are Cartesian products over $\L_i$ we have $\T\twoheadrightarrow\L_1\cup\dots\cup\L_m$. Now Fagin's theorem gives the desired decomposition of $\hR$.
\end{proof}
\noindent Theorem \ref{ProMultKeyRed} underscores a close relationship between projection and union, in predicate terms, between existential quantification and disjunction. One can represent existentially quantified formula as a disjunction by moving quantified variables into an index, as in $\exists tR(t,x)=\bigvee_tR^t(x)$. For finite relations, existential quantification can be converted into disjunctions completely. However, there are two limitations on such disjunctions. First, the number of disjuncts is limited by $|\D|^k$, where $k$ is the number of bound variables used, and second, the arity of disjuncts is then increased by $k$. As a result, if $k\geq n$ is needed to get the number of disjuncts under $|\D|^k$ then the disjunction does not convert into a projoin reduction.

On the other hand, allowing disjunctions/unions without restrictions completely trivializes the reduction problem. Any relation is a union of its tuples, and each tuple is the unary Cartesian product of singletons containing its members, i.e. $R=\bigcup_{a\in R}\,\{a_1\}\times\dots\times\{a_n\}$. Even if only finite unions are allowed every relation on a finite domain will `reduce' to unary Cartesian products. However, in principle, it may be of interest to explore unions with restrictions other than those imposed by the existential quantifier, e.g. with bounds on the number of terms independent of the size of the domain.

Example \ref{NotI3ProIrred} may suggest that non-identities $\neg I_n$ are projoin irreducible on finite domains. Indeed, $|\neg I_n|=|\D|^{n}-|\D|>|\D|^{n-2}$ for $n\geq3$ and $|\D|\geq2$, so hypostatic abstraction cannot  reduce them. We will now exploit the relationship between projections and unions to show that this is not the case when $\D$ is sufficiently large. But that requires somewhat more general projoins than Fagin-type decompositions \eqref{ProMultKeyRedRel}-\eqref{ProMultKeyRedPred}, namely, unions of joins that are not Cartesian products.
\begin{example}\label{NotInProRed} Recall the join reduction of $I_n$ from Example \ref{ExKeyRed}. Negating and applying de Morgan's law, we get 
\begin{equation*}
\neg I_n(x_1,\dots,x_n)=\neg I_2(x_1,x_n)\lor\dots\lor \neg I_2(x_{n-1},x_n)
=\bigvee_{j=1}^{n-1}\neg I_2(x_j,x_n).
\end{equation*}
We cannot convert this disjunction into an existentially quantified formula because the attribute sets in each disjunct are different, but we can combine all of them into a single cover $\L_i:=\{i,n\}$ for $i=1,\dots,n-1$. Then we define 
$$
R_i^j(x,y):=\begin{cases}\neg I_2(x,y),\,i=j\\
1,\,i\neq j,\end{cases}
$$
so that 
$$
\bigwedge_{i=1}^{n-1}R_i^j(x_i,x_n)=R_j^j(x_j,x_n)=\neg I_2(x_j,x_n),
$$
and
$$
\neg I_n(x_1,\dots,x_n)=\bigvee_{j=1}^{n-1}\bigwedge_{i=1}^{n-1}R_i^j(x_i,x_n).
$$
This disjunction is already amenable to ``existentialization". Assign a distinct $\a(j)\in\D$ to each disjunct, which requires $|\D|\geq n-1$, and set
$$
R_i(t,x,y):=\begin{cases}R_i^j(x,y),\,t=\a(j)\\
0,\,t\neq \a(j),\end{cases}
=\begin{cases}\neg I_2(x,y),\,t=\a(j),\,i=j\\
1,\,t=\a(j),\,i\neq j\\
0,\,t\neq \a(j).
\end{cases}
$$
Then the disjunction converts into a one parameter projoin of ternaries:
\beq\label{NotInProRedEq}
\neg I_n(x_1,\dots,x_n)=\exists t\left[\bigwedge_{i=1}^{n-1}R_i(t,x_i,x_n)\right].
\eeq
Thus, for $n\geq4$ and $|\D|\geq n-1$ the non-$n$-identity is projoin reducible with a single parameter. A similar construction works for non-$n$-diversity relation based on the join reduction \eqref{DiverseJoin}, but the condition is instead $|\D|\geq\frac{n^2-n}2$.

Going back to $\neg I_n$, the required size of the domain can be traded for arity. If we use pairs of elements to index the disjuncts then only $|\D|^2\geq n-1$ is required, and similarly $|\D|^k\geq n-1$ if we use $k$-tuples. But we must have $k+2\leq n-1$ so that it is still a reduction. Taking $k=n-3$ and noticing that $(n-1)^{\frac{1}{n-3}}\leq2$ for $n\geq5$ we conclude that $\neg I_n$ is reducible for $n\geq5$ on any $\D$ with $|\D|\geq2$ (albeit not necessarily to ternaries).
\end{example}
\noindent The construction in the above example can be generalized to prove the following theorem.
\begin{theorem}\label{ProNegJoin} 
Suppose an $n$-ary $R$ is join reducible to $N$ factors of arity at most $l$, and $N\leq|\D|^{n-l-1}$. Then $\neg R$ is projoin reducible. In particular, if $R$ has a $k$-key, with $k\leq n-4$ for $|\D|=2$ and $k\leq n-3$ for $|\D|\geq3$, then $\neg R$ is projoin reducible.
\end{theorem}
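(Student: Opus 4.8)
The plan is to generalize the construction of Example \ref{NotInProRed} verbatim, replacing the specific factors $\neg I_2(x_i,x_n)$ by arbitrary negated join factors. First I would write the hypothesized join reduction as $R(x)=\bigwedge_{j=1}^N R^{\L_j}(x_{\L_j})$ with each $0<|\L_j|\le l<n$, negate it, and apply de Morgan's law to obtain
$$
\neg R(x_1,\dots,x_n)=\bigvee_{j=1}^N\neg R^{\L_j}(x_{\L_j}).
$$
The disjuncts live on different schemes, so, exactly as in the example, I would homogenize them over the common cover $\{\L_1,\dots,\L_N\}$ by setting $R_i^j:=\neg R^{\L_i}$ when $i=j$ and $R_i^j:=1$ (universal) otherwise, so that $\bigwedge_{i=1}^N R_i^j(x_{\L_i})=\neg R^{\L_j}(x_{\L_j})$ and hence $\neg R(x)=\bigvee_{j=1}^N\bigwedge_{i=1}^N R_i^j(x_{\L_i})$.

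The second step is the ``existentialization''. Introducing $k:=n-l-1$ parameters $\T$, there are $|\D|^{k}=|\D|^{n-l-1}$ distinct key-tuples available, and since by hypothesis $N\le|\D|^{n-l-1}$ I can assign a distinct $\a(j)\in\D^\T$ to each disjunct. Defining $R_i(t,x_{\L_i}):=R_i^j(x_{\L_i})$ when $t=\a(j)$ and $0$ otherwise makes $\bigwedge_i R_i(t,x_{\L_i})$ collapse to the $j$-th disjunct exactly when $t=\a(j)$ and vanish for every other value of $t$, so that
$$
\neg R(x)=\exists t\left[\bigwedge_{i=1}^N R_i(t,x_{\L_i})\right].
$$
Each factor $R_i$ has scheme $\T\cup\L_i$ of arity $k+|\L_i|\le(n-l-1)+l=n-1<n$, and arity $\ge|\L_i|>0$, so this is a genuine projoin reduction of $\neg R$.

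For the ``in particular'' clause I would invoke Theorem \ref{KeyRed}: a key of size $\kappa$ (renamed to avoid clashing with the parameter count $k$) yields a join of $m:=n-\kappa$ factors of arity $\kappa+1$, so here $N=m$ and $l=\kappa+1$, and the reducibility hypothesis $N\le|\D|^{n-l-1}$ becomes the purely arithmetic inequality $m\le|\D|^{m-2}$. It then remains to check this: a short monotonicity argument (since $|\D|^{m-2}$ outgrows $m$ once the base cases hold, the forward difference being eventually positive) shows $m\le 2^{m-2}$ for all $m\ge4$ and $m\le|\D|^{m-2}$ for all $m\ge3$ when $|\D|\ge3$, which translate back into $\kappa\le n-4$ for $|\D|=2$ and $\kappa\le n-3$ for $|\D|\ge3$.

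The construction itself is bookkeeping once the worked example is in hand; the only place demanding genuine care is satisfying three constraints simultaneously --- enough distinct keys ($N\le|\D|^{k}$), factor arity strictly below $n$ ($k+l\le n-1$), and a nonempty parameter set --- which is precisely what forces the choice $k=n-l-1$ and makes the hypothesis $N\le|\D|^{n-l-1}$ tight. I expect the only minor obstacle to be the boundary case $k=0$ (equivalently $l=n-1$, forcing $N=1$), where there are no parameters and $\neg R$ is merely degenerate rather than reducible through a proper projoin; this can be dispatched separately and does not arise in the key-based corollary, where $k=m-2\ge1$.
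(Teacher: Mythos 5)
Your proposal is correct and follows essentially the same route as the paper: de Morgan's law turns the join into a disjunction, which is then "existentialized" exactly as in Example \ref{NotInProRed} using $k=n-l-1$ parameters, and the key-based clause reduces to the same arithmetic inequality $m\leq|\D|^{m-2}$ checked at the same thresholds. Your worry about the boundary case $k=0$ is moot, since a join reduction with a single factor of arity below $n$ cannot cover the scheme $\S$ in the first place.
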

\begin{proof} As in Example \eqref{NotInProRed}, we present $\neg R$ as a disjunction and then convert it into a projoin. To get enough tuples for $N$ disjuncts we need $N\leq|\D|^k$, where $k$ is the number of projoin parameters, and $k+l\leq n-1$ so that the converted factors still have lower arity than $R$.

For the second claim, apply Theorem \ref{KeyRed} to obtain a join reduction of $R$ with $N=n-k$ and $l=k+1$, and note that $N\leq|\D|^{n-l-1}$ becomes $n-k\leq|\D|^{n-k-2}$. By calculus, $x\leq2^{x-2}$ for $x\geq4$ and $x\leq d^{x-2}$ for $x\geq3$ when $d\geq3$.
\end{proof}
The next example gives a taste of intricacies involved in ruling out general projoin reductions to prove unconditional projoin irreducibility.
\begin{example}\label{TriPro2Irred} Consider reducing $\neg I_3$ on a domain of size $|\D|=d$ with {\it two} parameters. As in Example \ref{NotI3ProIrred}, the ansatz reduces to
$$\label{NotI3Pro2Join}
\neg I_3(x_1,x_2,x_3)=\exists t_1\exists t_2\left[A(t_1,t_2)\land \bigwedge_{i=1}^{3}\Big(P^i(t_1,x_1)\land Q^i(t_2,x_2)\Big)\right].
$$
Replacing $t_1,t_2\in\D$ by indices $j,k\in\N_d$ we transform it into a disjunction
$$\label{NotI3Pro2Dis}
\neg I_3(x_1,x_2,x_3)=\bigvee_{j,k=1}^{d}a_{jk}\land \left[\bigwedge_{i=1}^{3}\Big(P^i_j(x_i)\land Q^i_k(x_i)\Big)\right].
$$
Since unaries represent subsets of $\D$, conjunctions with different $x_i$ their Cartesian products, and with the same $x_i$ their intersections we obtain a union of unary Cartesian products
$$\label{NotI3Pro2Set}
\neg I_3=\bigcup_{j,k=1}^{d}a_{jk} \left[\bigtimes_{i=1}^{3}\Big(P^i_j\cap Q^i_k\Big)\right],
$$
where $a_{jk}=0,1$ determines whether the term is included into the union. In contrast to Example \ref{NotI3ProIrred}, we can use up to $d^2$ terms in the union, but, as a tradeoff, the unary factors are not independent but must form a subset-valued rank $1$ Boolean matrix $R^i_{jk}=P^i_j\cap Q^i_k$ for each $i$. Not only do we have to check whether $\neg I_3$ splits into a union of up to $d^2$ unary Cartesian products, but also whether Boolean matrices $R^i_{jk}$ of their factors simultaneously factorize into outer products of Boolean vectors. 
\end{example}
\noindent The Boolean matrix factorization problem is quite involved even for $0$-$1$ matrices \cite{Miet21}, and with more parameters one would have to deal with simultaneous factorization of even more interdependent Boolean tensors. We can now observe the progression from a simple cardinality condition for key reductions (Theorem \ref{ProKeyRed}), to finding union decompositions with independent Cartesian factors for multikey reductions (Theorem \ref{ProMultKeyRed}), and to finding such decompositions with intricate tensor factorizations for general projoin reductions with many parameters. This should dispel the initial impression that with `enough' parameters every relation `clearly should be' projoin reducible. And it suggests a fruitful connection between reduction of relations and factorization of Boolean tensors, an active area of research in modern data science \cite{Miet11}.

\section{Bonds and teridentity}\label{bond}

In this section we will connect the theory of projoin reductions motivated by the database theory to the older theory of C.S. Peirce that supported his once controversial reduction thesis. While the main ideas are scattered in Peirce's writings, they did not gain currency until the formalizations by Herzberger \cite{Herz} and Burch \cite{Bur91}. For a modern mathematical approach see \cite{CorDau,CorPos04}.

Unlike Cartesian products and joins, projoin is not a single operation on attributed relations. Its definition additionally depends on the set of projected attributes. One may feel that this is too permissive and/or clumsy. A natural way to specify projected attributes intrinsically is to choose all and only those that are not shared by the factors, i.e. to project out the shared attributes. If we think of joining as selecting tuples that match on shared attributes and splicing them together then leaving out the matched parts can give a meaningful response to a query. 

Let us call such special projoins {\it pure}. Purity is a significant restriction on the operation: while the Fagin-type projoins \eqref{ProMultKeyRedRel}-\eqref{ProMultKeyRedPred} are pure, the reduction \eqref{NotInProRedEq} we constructed for $\neg I_n$ is not. In the case of two factors, the pure projoin is what Peirce called {\it relative product} (``relative" was his term for relation) \cite{Brun91}. For binary relations, functions in particular, it is simply their composition: $\exists y\left[P(x,y)\land Q(y,z)\right]$. Peirce considered the relative product more basic than joins and projections through which we defined it, and preferred to reverse the order of definitions.

Like Cartesian and Boolean products, the relative product is a commutative binary operation on attributed relations. This is because the positions with the quantified variable are determined by the shared attribute, not by the order of factors, so commutativity reflects the commutativity of conjunction. However, unlike the other two, the relative product is not associative. Consider binary relations $P(u,x), Q(u,y), R(u,z)$. Associating the first two first gives $\exists t\left[P(t,x)\land Q(t,y)\land R(u,z)\right]$, but the last two first gives $\exists t\left[P(u,x)\land Q(t,y)\land R(t,z)\right]$. And $\exists t\left[P(t,x)\land Q(t,y)\land R(t,z)\right]$ cannot be generated by relative products at all, so not even all pure projoins are generated. This is because identified variables (shared attributes) in joins remain free, and it does not matter whether we identify two of them at a time or more, we can repeat the exercise when iterating. But in the relative product identified variables are quantified over (projected out), and no new variable can be identified with them afterwards. 

In other words, in iterated relative products no attribute can be shared by more than two factors, and relative product is associative when restricted to triples of relations satisfying this condition. This is a further restriction on admissible projoins that restricts even relation schemes of factors that can appear in them. We will adopt Herzberger's term bond for this restricted class of projoins, although our bond is slightly more permissive than his along the lines adopted in \cite{CorPos04}.
\begin{definition}\label{BondDef}
A collection of attributed relations $R_i\subseteq\D^{\L_i}$ is called \textbf{bondable} when no three of $\L_i$ share an attribute. Their
\textbf{bond} is then the projective join with all the shared attributes projected out, i.e. the projected set is $\Gamma:=\cup_i\L_i\-\left(\cup_{i\neq j}\L_i\cap\L_j\right)$. A relation $R\in\D^n$ is a \textbf{bond over a cover} $\T\cup\S=\L_1\cup\dots\cup\L_m$ with $\T\cap\S=\emptyset$ when it is a bond of some $R^{\L_i}\subseteq\D^{\L_i}$, called its \textbf{bond factors}, with elements of $\T$ called its (bond) \textbf{parameters}. $R$ is called \textbf{bond reducible} when it is a bond with $0<|\L_i|<|\S|$.
\end{definition}
\noindent Note that we allow $\T=\emptyset$, and so, in contrast to Peirce and Herzberger, Cartesian products are bonds, with $0$ parameters. This makes bonds a generalization of Cartesian products alternative to joins, and simplifies some formulations. In terms of predicates, our definition means that the sets of free variables in different bond factors are disjoint, and every bound variable is present in exactly two factors. So the projoins $\exists t\left[P(x,t)\land Q(x,t)\right]$, $\exists t\left[P(x,t)\land Q(y,z)\right]$ are not bonds, and neither are hypostatic abstractions like \eqref{give}.

We will now show, following Peirce and Burch \cite{Bur91,Bur97}, that, somewhat surprisingly, this severely restricted operation leads to essentially the same notion of reducibility as general projoins. Peirce's first observation was that multiple variable identifications bond reduce to pairwise ones by using identity predicates, e.g.
\begin{multline}\label{multiidn}
R_1(t,x_{\L_1})\land\dots\land R_n(t,x_{\L_n})=\\
\exists t_1\dots\exists t_n\left[I_{n+1}(t,t_1,\dots,t_n)\land R_1(t_1,x_{\L_1})\land\dots\land R_n(t_n,x_{\L_n})\right].
\end{multline}
And his second observation was that $n$-identities $I_n$ for $n\geq4$ bond reduce to teridentities $I_3$: 
\begin{multline}\label{InviaI3}
I_{n}(x_1,\dots,x_n)=\\
\exists t_1\dots\exists t_{n-3}\left[I_3(x_1,x_2,t_1)\land I_3(t_1,x_3,t_2)\land\dots\land I_3(t_{n-3},x_{n-1},x_n)\right].
\end{multline}
Projoins with quantification into a single position can also be reduced to bonds by replacing the factors with the quantified variables by factors of lower arity, as in $\widetilde{P}(x_{\L}):=\exists t P(x_{\L},t)$. Applying the above identities converts any projoin into a bond of the original factors (up to renaming of attributes), their projections, and teridentities. Let us call this conversion {\it bond explication}. For example, the bond explication of $\exists\,t\left[P(t,x_1,x_2,t)\land\exists\,s\,Q(x_2,x_3,s,t)\right]$ is 
\vspace{-0.5em}
\begin{multline}\label{ExpliExm}
\exists\,t_1\exists\,t_2\exists\,t_3\,\exists\,y_1\exists\,y_2\big[I_{3}(t_1,t_2,t_3)\land I_{3}(y_1,x_2,y_2)\land P(t_1,x_1,y_1,t_2)\big.\\
\big.\land\exists\,s\left[I_1(s)\land Q(y_2,x_3,s,t_3)\right]\!\big].
\end{multline}
Since projection does not increase arity, and explication can add only ternaries, we have the following theorem.
\begin{theorem}\label{ProJointoBond} 
An $n$-ary with $n\geq4$ is projoin reducible if and only if it is bond reducible. A ternary is projoin reducible if and only if it decomposes into a bond of unaries, binaries and teridentities.
\end{theorem}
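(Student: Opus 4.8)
The plan is to establish both directions via the bond explication procedure already sketched in the text, which converts any projoin into a bond of the original factors, their projections, and teridentities. The key observation driving the argument is that both projection (Definition~\ref{ProJoinDef}) and the explication rewriting can only produce factors of arity at most the maximum arity appearing in the original reduction, together with auxiliary teridentities $I_3$; neither operation manufactures factors of arity strictly greater than what one started with. So the entire content is a careful bookkeeping of arities.

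For the first statement (arity $n\geq4$), the direction that bond reducibility implies projoin reducibility is immediate, since by Definition~\ref{BondDef} every bond is a special projoin (the projected set $\Gamma$ is fixed to the shared attributes, and $0<|\L_i|<|\S|$ is exactly the reducibility condition in both cases). For the converse, I would start from a projoin reduction $R=\pi_\S[R^{\L_1}\Join\dots\Join R^{\L_m}]$ with all $|\L_i|<|\S|=n$, and apply bond explication. First, using \eqref{multiidn}, I replace any attribute shared by three or more factors with a cascade of fresh parameters linked by a single high-arity identity relation; then, using \eqref{InviaI3}, I break each such identity $I_k$ into a bond of teridentities. Projoins that quantify into a single position of a factor are handled by absorbing the quantifier into that factor, lowering its arity (the $\widetilde P$ construction). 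The result is a bond whose factors are: (i) the original $R^{\L_i}$ or their projections, all of arity $<n$; and (ii) teridentities $I_3$, of arity $3<n$ since $n\geq4$. Thus every factor has arity $<n$, the bondability condition holds by construction, and the output is a genuine bond reduction of $R$.

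For the second statement (ternaries), the same explication applies, but now the teridentities introduced have arity $3=n$, so they are not lower-arity factors and do not by themselves witness bond reducibility. This is precisely why the statement is phrased as a decomposition into a bond of unaries, binaries, and teridentities rather than as bond reducibility. I would argue that a ternary $R$ is projoin reducible if and only if it admits a projoin reduction into factors of arity $1$ and $2$ (any arity-$2$ or arity-$1$ factor already has arity $<3$; the reducibility condition $0<|\L_i|<3$ forbids arity-$3$ factors), and then bond-explicate that reduction. The explication produces exactly unaries, binaries (from the original factors and their single-position projections) and teridentities, which is the claimed form; conversely any such decomposition is a projoin, giving projoin reducibility.

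The main obstacle I anticipate is verifying that bond explication, as a global rewriting of a multi-factor projoin, genuinely preserves both the relation $R$ computed and the arity bound simultaneously—in particular that absorbing quantifiers into single-position factors, splitting multi-shared variables, and cascading $I_k$ into $I_3$'s can be performed \emph{without} ever forcing some intermediate factor up to arity $n$ in the $n\geq4$ case. One must confirm that each $I_3$ introduced in \eqref{InviaI3} links at most a bounded, controlled set of attributes so the bondability (no attribute in three factors) is maintained after every rewriting step, and that the projected-attribute set $\Gamma$ coming out of explication matches the pure-bond prescription. This is essentially a careful induction on the explication steps, checking the arity and sharing invariants are stable; the identities \eqref{multiidn} and \eqref{InviaI3} do the substantive work, and the remaining effort is confirming the invariants survive their iterated application.
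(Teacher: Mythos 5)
Your treatment of the $n\geq4$ case and of the forward direction for ternaries matches the paper: bond $\Rightarrow$ projoin is immediate from Definition~\ref{BondDef}, and projoin $\Rightarrow$ bond follows by bond explication since the only new factors introduced are teridentities of arity $3<n$. The gap is in your converse for ternaries. You write that ``any such decomposition is a projoin, giving projoin reducibility,'' but this inference does not go through: a bond of unaries, binaries \emph{and teridentities} is indeed a projoin, yet the teridentity factors have arity $3=|\S|$, so this projoin violates the condition $0<|\L_i|<|\S|$ and therefore does not witness projoin reducibility. You yourself note two sentences earlier that the teridentities ``are not lower-arity factors,'' which is exactly why the conclusion cannot be read off directly.

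This missing step is in fact the one claim the paper singles out as not covered by bond explication, and it requires an explicit construction: given a bond of unaries, binaries and teridentities, run \eqref{multiidn} in reverse --- assign a single fresh variable to each teridentity, substitute it for all occurrences of that teridentity's variables in the other factors, and delete the teridentity together with the quantifiers over its variables. The resulting expression is equivalent and contains only unary and binary factors, hence is a genuine projoin reduction of the ternary. Adding this de-explication step closes the gap; everything else in your proposal is sound and follows the paper's route.
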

\begin{proof} The only claim not covered by bond explication is that ternaries that are bonds of unaries, binaries and teridentities are projoin reducible. Given such a bond, assign a new variable to each teridentity and replace by it all occurrences of the original variables from the teridentity. Then remove the teridentity and the quantifiers over its variables. By \eqref{multiidn}, this produces an equivalent expression, and, since all teridentities are removed, it is a projoin of unaries and binaries only.
\end{proof}
\noindent In particular, the projoin reduction  \eqref{NotInProRedEq} of $\neg I_n$ can be transformed not just into a pure one, but even into a bond reduction. Note that hypostatic abstraction \eqref{ProKeyRedPred} with $k=1$, when it applies, decomposes any ternary into a bond of binaries and teridentities. This gives us a bond analog of Corollary \ref{InfDomProRed}. 
\begin{theorem}[\textbf{Peirce's reduction thesis}]\label{InfDomBondRed} 
Any $n$-ary with $n\geq3$ on an infinite domain reduces to a bond of unaries, binaries and teridentities. The only bond irreducible relations on such domains are all unaries, non-degenerate binaries, and non-degenerate ternaries.
\end{theorem}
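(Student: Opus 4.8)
The plan is to split the statement into the \emph{reduction} claim (every $n$-ary with $n\ge 3$ is a bond of unaries, binaries, and teridentities) and the \emph{classification} of the bond irreducibles, and to feed off the projoin results already established.

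First I would settle the reduction claim. For $n\ge 4$, Corollary \ref{InfDomProRed} supplies a projoin reduction into $n$ binaries, and bond explication (the construction behind Theorem \ref{ProJointoBond}) rewrites any projoin as a bond whose factors are the original ones, their projections, and teridentities; here the originals are binaries and their projections are unaries or binaries, so the outcome is a bond of unaries, binaries, and teridentities, all of arity $\le 3<n$. For $n=3$, cardinal arithmetic gives $|R|\le|\D|^3=|\D|$, so every ternary admits a $1$-key (Lemma \ref{KeyAdmCrit}); hypostatic abstraction (Theorem \ref{ProKeyRed} with $k=1$) yields $R(x_1,x_2,x_3)=\exists t\,[R_1(t,x_1)\land R_2(t,x_2)\land R_3(t,x_3)]$, and inserting a teridentity as in \eqref{multiidn} converts the triple-shared $t$ into the genuine bond $\exists t_1\exists t_2\exists t_3\,[\,I_3(t_1,t_2,t_3)\land R_1(t_1,x_1)\land R_2(t_2,x_2)\land R_3(t_3,x_3)\,]$ of three binaries and one teridentity.

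Next I would handle the classification. That unaries and non-degenerate binaries are bond irreducible is immediate: they are projoin irreducible by Corollary \ref{InfDomProRed}, and since bonds are a subclass of projoins, a bond reduction would be a projoin reduction, which is impossible. Conversely, everything else is bond reducible: degenerate binaries and degenerate ternaries are Cartesian products of strictly lower-arity factors, hence bonds with $\T=\emptyset$, and all $n$-aries with $n\ge 4$ are bond reducible by the reduction claim (their factors have arity $\le 3<n$). The one remaining case, bond irreducibility of non-degenerate ternaries, is the crux, and it is \emph{not} covered by projoin irreducibility, since such ternaries are in fact projoin reducible on an infinite domain.

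For that case I would argue by a valence/parity count. Suppose a non-degenerate ternary $R(x_1,x_2,x_3)$ were a bond of unaries and binaries. In any bond each free attribute lies in exactly one factor and each parameter in exactly two, so summing attribute-occurrences (``hooks'') gives $2b+u=2c+3$, where $b,u$ are the numbers of binary and unary factors and $c$ the number of bonded parameters; hence $u$ is odd, so $u\ge 1$. I would then simplify: a unary on a parameter shared with a binary is absorbed into that binary and the parameter quantified out (lowering the binary to a unary, as with $\widetilde{P}(x_\L):=\exists t\,P(x_\L,t)$), and two unaries sharing a parameter collapse to a truth value; each move preserves the parity of $u$. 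When no further simplification is possible, every surviving unary sits on a free attribute; but a free attribute $x_i$ carried only by a unary $U(x_i)$ splits the bond as $U(x_i)\times Q(x_j,x_k)$, a degenerate Cartesian product, contradicting non-degeneracy. Since $u$ stays odd there is always at least one such unary, giving the contradiction. The main obstacle is exactly this final step: the bare parity count does not by itself forbid unaries, so the argument must weld the parity invariant to the non-degeneracy hypothesis to eliminate the stubborn odd unary, and one must verify that the simplification terminates (the hook count strictly drops) and keeps the configuration a genuine bond throughout.
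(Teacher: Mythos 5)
Your proposal is correct, and for the one non-trivial part — bond irreducibility of non-degenerate ternaries — it takes a genuinely different route from the paper. The reduction clause and the unary/binary cases are handled exactly as in the paper (Corollary \ref{InfDomProRed} plus bond explication for $n\ge 4$, hypostatic abstraction with one key attribute plus \eqref{multiidn} for $n=3$, and the ``bonds are projoins'' observation for unaries and non-degenerate binaries). For non-degenerate ternaries, the paper defers to the graph-theoretic Theorem \ref{TerNonDeg}: non-degeneracy forces the bond multigraph to be connected (Corollary \ref{CartDiag}), and Lemma \ref{III-I}, proved via the handshaking theorem and Listing's formula $V-E+C-K=0$, then gives $I\!I\!I\ge I-2\ge n-2\ge 1$, so a ternary factor must be present. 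Your hook count $2b+u=2c+3$ is exactly the parity content of Lemma \ref{III-I} specialized to $I\!I\!I=0$ (there $I=3+u$ and $I\!I\!I$ must share its parity), but you then proceed without any appeal to connectivity or the cyclomatic number: the parity invariant guarantees an odd number of unaries survives your merging procedure, the procedure terminates because the total number of factors strictly drops, and a surviving unary must sit on a free attribute, splitting off a unary Cartesian factor and contradicting non-degeneracy. Your argument is more elementary and self-contained for the $n=3$ case needed here; what the paper's lemma buys in exchange is the quantitative bound $\ter(R)\ge n-2$ for all $n$, which is reused in Section \ref{ternPRT} to refine the reduction thesis. One small point to make explicit: when two unaries sharing a parameter collapse to ``false'' you get $R=\emptyset_3$, which is itself a degenerate Cartesian product, so the contradiction with non-degeneracy still goes through in that branch.
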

\noindent The first, reducibility, clause of the thesis is a direct consequence of Corollary \ref{InfDomProRed} and Theorem \ref{ProJointoBond}. Indeed, it is essentially equivalent to projoin reducibility to binaries alone. Ironically, L\"owenheim's result \cite{Low} to that effect once made it controversial due to the confusion between these closely related notions of reducibility \cite{Bur97,Kosh22}.

The second, irreducibility, clause, as applied to unaries and binaries, also follows trivially. Since bond is a special case of projoin, and they are projoin irreducible, they are all the more bond irreducible. However, the part concerning ternaries is non-trivial. It will follow from a graph-theoretic argument in Section \ref{ternPRT} (Theorem \ref{TerNonDeg}). 

While bond and projoin reducibilities are (almost) the same, bond reductions are much more special than general projoin reductions. Peirce felt that general projoin reductions conceal the complexity involved in attribute matching (variable identifications), and, as a result, do not provide ``true" or ``complete" analysis of a relation delivered by bonds \cite{Brun91,Kosh22}.

\section{Bonding diagrams}\label{diag}

In this section we introduce graphical representation of joins, projoins and bonds which pictures the structure of relations by analogy to diagrams of chemical decompositions of compounds into elements. It also allows to bring in graph-theoretic methods into analysis of reductions. The diagrams we describe are simplified versions of Peirce's existential graphs \cite{CorPos04} covering only a fragment of predicate logic (conjunction and existential quantifier) without the associated graphical calculus. We use some standard notation and terminology from graph theory \cite{Berge} throughout this and the following sections.
\begin{definition}\label{ProGraph}
The \textbf{projoin graph} is a labeled bipartite graph with vertices for each factor and each attribute of the projoin. An edge joins them when the attribute is in the relation scheme of the factor. The vertices are labeled by relation and attribute names, and sorted into \textbf{predicate vertices} (for factors), \textbf{free attribute vertices} (for projected attributes) and \textbf{bound attribute vertices} (for projected out attributes). The \textbf{valency} of a vertex is its graph-theoretic degree (the number of incident edges) for predicate and bound attribute vertices, and the graph-theoretic degree increased by $1$ for free attribute vertices.
\end{definition}
\begin{figure}[!ht]
\begin{centering}
a)\ \ \ \includegraphics[width=0.14\textwidth]{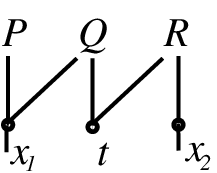} \hspace{0.4in} b)\ \ \ \includegraphics[width=0.18\textwidth]{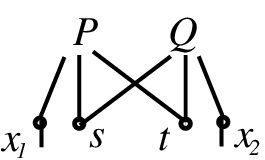} \par
\end{centering}
\caption{\label{ProGraphs} Projoin graphs for a) $\exists\,t[P(x_1)\land Q(x_1,t)\land R(t,x_2)]$; b) $\exists\,s\exists\,t[P(x_1,s,t)\land Q(s,t,x_2)]$.}
\end{figure}
\noindent In practice, we depict predicate vertices as predicate letters and attribute vertices as dots labeled by variables. This makes it easier to associate graphs to predicate formulas, and, since only matching of attributes matters in decompositions rather than their proper names, variable labels work well enough for our purposes. Free attribute vertices are additionally labeled by an extra stem (hanging edge) coming out of them, which explains the valency convention. Examples of projoin graphs and the corresponding predicate formulas are shown on Figure \ref{ProGraphs}.

Projoin graphs can get quite cluttered and are made somewhat more readable by converting them into bonding diagrams defined below. These diagrams are also better equipped to depict bonds.
\begin{definition}\label{BonDiag}
The \textbf{bonding diagram} is obtained from the projoin graph by replacing each bivalent bound vertex by an edge connecting the corresponding predicate vertices, and replacing each bivalent free vertex by a hanging edge from the corresponding predicate vertex. The new edges carry the attribute labels of the removed vertices, and the labels of free attribute vertices are moved to their stems. We call the remaining attribute vertices  of valency greater than $1$ \textbf{branch points}, and of valency $1$ \textbf{dead ends}. Hanging edges, incident to predicate vertices and branch points, are called \textbf{loose ends}. Bonding diagrams of bonds are called \textbf{bond diagrams}.
\end{definition}
\vspace{-1.5em}
\begin{figure}[!ht]
\begin{centering}
a)\ \ \ \includegraphics[width=0.26\textwidth]{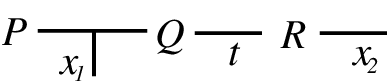} \hspace{0.1in} b)\ \ \ \includegraphics[width=0.24\textwidth]{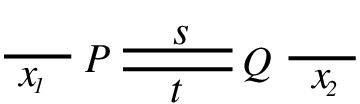} \par
\end{centering}
\caption{\label{BondGraphs} Bonding diagrams of projoins from Figure \ref{ProGraphs}.}
\end{figure}
\noindent Loose ends correspond to free variables, and dead ends to bound variables occurring in a single position. Branch points are the device for identifying variables in different predicates -- all edges attached to a branch point carry the same variable. The variable is free when one of the attached edges is a loose end, as in the T-shaped link on Figure \ref{BondGraphs}\,a), otherwise it is existentially quantified, as $t$ on Figure \ref{BondExpli}\,a). When a variable appears in only two predicates, no branch point is necessary, a simple edge connecting them suffices. Thus, bonding diagrams of bonds are graphically distinguished by having no branch points. When two or more variables appear in the same two predicates, the diagram displays a multiedge connecting their vertices, as in Figure \ref{BondGraphs}\,b). 

Note that if a bonding diagram has disconnected subdiagrams then the relations they represent are Cartesian factors of the original, except for the case when they have no free attributes, i.e. are closed formulas. If they are true all predicates in them can be dropped without any loss, and if false the factored relation is itself empty. From now on we will only consider projoins without such redundant predicates and call them {\it non-redundant}. 
\begin{corollary}\label{CartDiag} If a non-redundant bonding diagram of a relation is disconnected then the relation is degenerate. The connected components are bonding diagrams of its Cartesian factors. 
\end{corollary}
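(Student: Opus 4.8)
The plan is to read a Cartesian partition of $\S$ directly off the connected components of the diagram and then invoke the predicate characterization \eqref{FreeConj} of Cartesian products over a partition. Since the bonding diagram is obtained from the projoin graph by local replacements (bivalent vertices become edges or hanging edges) that do not alter how the predicate vertices are linked, I would work with whichever picture is convenient; what matters is only that two factors lie in the same connected component precisely when they are joined by a chain of shared attributes.

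First I would fix the combinatorics. Label the components $C_1,\dots,C_p$ with $p\geq2$ (disconnectedness), let $I_\alpha$ be the factor indices falling in $C_\alpha$, and set $\S_\alpha:=\S\cap\bigcup_{i\in I_\alpha}\L_i$ and $\T_\alpha:=\T\cap\bigcup_{i\in I_\alpha}\L_i$. The key observation is that any attribute shared by two factors produces an edge (or common branch point) between them, so a given attribute, together with every factor containing it, is confined to a single component. Hence the $\S_\alpha$ are pairwise disjoint and cover $\S$, and likewise the $\T_\alpha$ partition $\T$. This is exactly where the non-redundancy hypothesis is used: a component carrying no free attribute would be a closed subformula, the precise redundancy that was excluded, so each $\S_\alpha\neq\emptyset$. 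We thus obtain a genuine partition $\S=\S_1\cup\dots\cup\S_p$ with $p\geq2$ and all parts nonempty, over which I will display degeneracy.

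Next I would factor the defining formula along the components. Writing the projoin as $R(x)=\exists x_\T\bigwedge_i R^{\L_i}(x_{\L_i})$ and grouping the conjuncts by component, each bound variable occurs only in factors of a single component, so $\exists x_\T$ distributes over the grouping:
\beq
R(x)=\bigwedge_{\alpha=1}^{p}\underbrace{\exists x_{\T_\alpha}\bigwedge_{i\in I_\alpha}R^{\L_i}(x_{\L_i})}_{=:\,R_\alpha(x_{\S_\alpha})}.
\eeq
Each $R_\alpha$ is a relation on the scheme $\S_\alpha$ (its free attributes are exactly those of $C_\alpha$), and the displayed identity is a \emph{free} conjunction because the $\S_\alpha$ are disjoint. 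By \eqref{FreeConj} this exhibits $R$ as a Cartesian product over the partition $\S=\S_1\cup\dots\cup\S_p$; since $p\geq2$ and every $\S_\alpha\neq\emptyset$, $R$ is degenerate. Finally, by the remark following Theorem \ref{CartInd} the Cartesian factors of such a product coincide with the projections, so $R_\alpha=\pi_{\S_\alpha}R$; as $R_\alpha$ is precisely the sub-projoin collected in $C_\alpha$, each component $C_\alpha$ is the bonding diagram of the Cartesian factor $\pi_{\S_\alpha}R$, as claimed.

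I expect the only real subtlety to be bookkeeping rather than mathematics. One must justify distributing $\exists x_\T$ across components, which rests on the fact that no bound attribute bridges two components, and one must correctly invoke non-redundancy to rule out free-attribute-free components so that every part of the partition is nonempty. Both are immediate from the definitions, so I anticipate no genuinely hard step; the work is in stating the component-to-partition correspondence cleanly.
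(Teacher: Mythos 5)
Your proof is correct and follows essentially the same route the paper takes (the paper leaves the corollary as an immediate consequence of the paragraph preceding it): group the conjuncts by connected component, note that no shared attribute can bridge components so the existential quantifier distributes, and use non-redundancy to guarantee each component contributes a nonempty block of free attributes, yielding a free conjunction as in \eqref{FreeConj}. Your write-up merely makes the bookkeeping explicit, including the identification of the factors with the projections $\pi_{\S_\alpha}R$.
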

\noindent The converse is false for the trivial reason that one can use degenerate predicates in a reduction. But if a relation is degenerate it {\it admits} reductions with disconnected and non-redundant bonding diagrams.

We intentionally used a two-step definition instead of defining bonding diagrams directly to emphasize the singling out of bivalent vertices (pairwise attribute identifications), which highlights binary bonding, i.e. relative products. A bond diagram will have no attribute vertices, and all its attribute labels will attach to edges, including hanging edges. 

Our next observation is that bond explication also has a simple graphical interpretation in terms of bonding diagrams.
\begin{figure}[!ht]
\begin{centering}
a) \includegraphics[width=0.56\textwidth]{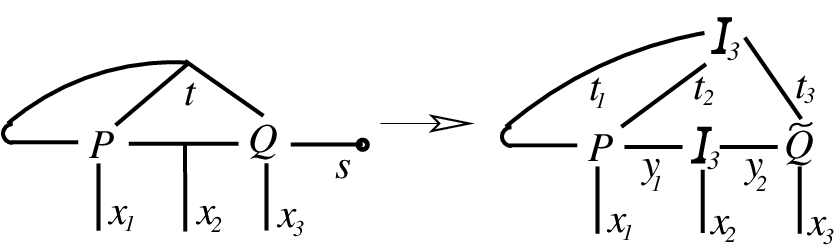} \hspace{0.05in} b)  \includegraphics[width=0.32\textwidth]{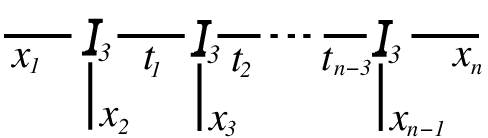} \par
\end{centering}
\caption{\label{BondExpli} a) Bond explication of $\exists\,t\left[P(t,x_1,x_2,t)\land\exists\,s\,Q(x_2,x_3,s,t)\right]$ as \eqref{ExpliExm}, here $\widetilde{Q}(t_3,y_2,x_3):=\exists s\,Q(t_3,y_2,x_3,s)$; b) bonding diagram of the reduction \eqref{InviaI3} of $n$-identity to teridentities.}
\end{figure}
\begin{corollary}\label{BondExpDiag} 
The bonding diagram of a bond explicated projoin is obtained from its original bonding diagram by absorbing dead ends into the adjacent predicate vertices, and replacing $n$-valent branch points by the diagrams of bond reductions of $n$-identities to teridentities (Figure \ref{BondExpli}).
\end{corollary}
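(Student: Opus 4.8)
The plan is to verify the corollary by tracking, one at a time, the effect on the bonding diagram of each of the three algebraic moves that constitute bond explication, and then checking that their composite leaves a diagram free of both branch points and dead ends, i.e. an honest bond diagram. Recall that explication is assembled from (a) the single-position quantification $\widetilde{P}(x_\L):=\exists t\,P(x_\L,t)$, (b) the splitting of a multiply identified variable into pairwise identifications via \eqref{multiidn}, and (c) the reduction \eqref{InviaI3} of the resulting identity predicate to a chain of teridentities. Each move is an equivalence of predicate formulas, so the diagrams represent the same relation at every stage and it suffices to describe the diagrammatic bookkeeping, invoking Theorem \ref{ProJointoBond} only to know that the final object is a bond.

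First I would treat dead ends. By Definition \ref{BonDiag} a dead end is an attribute vertex of valency $1$; by the valency convention (bivalent free and bound vertices have already been turned into edges, and free vertices always have valency at least $2$) it must be a \emph{bound} vertex of graph-degree $1$, hence a bound variable $t$ occurring in exactly one predicate $P$. Applying move (a) replaces $P$ by $\widetilde P:=\exists t\,P$, deleting the vertex $t$ together with its unique incident edge and lowering the valency of the predicate vertex $P$ by one, introducing no new attribute vertex. This is precisely ``absorbing the dead end into the adjacent predicate vertex.''

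Next I would treat branch points. A branch point is an attribute vertex of valency $n\ge 3$, corresponding to a variable shared among the incident factor-edges, free exactly when one of those edges is a loose end. Move (b) replaces this single shared variable by one fresh variable inserted into each incident factor, together with a new identity predicate, so that each fresh variable now occurs in exactly two predicates --- once in its factor and once in the identity --- and hence becomes a bivalent vertex, i.e. an edge. A short check of the two cases shows the new identity is in both the $n$-ary $I_n$: in the bound case \eqref{multiidn} produces $I_{n+1}$ whose extra argument is absorbed by the surrounding quantifier via $\exists t\,I_{n+1}(t,t_1,\dots,t_n)=I_n(t_1,\dots,t_n)$, while in the free case the free occurrence itself supplies the $n$-th argument as a loose end. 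Move (c) then expands this $I_n$ vertex through \eqref{InviaI3} into its chain of teridentity vertices, whose bonding diagram is the tree displayed in Figure \ref{BondExpli}. The net effect is to excise the $n$-valent branch point and glue in the bond diagram of the reduction of the $n$-identity, exactly as claimed.

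Finally I would confirm that the output really is a bond diagram. After applying (a) to every dead end and (b)--(c) to every branch point, every surviving attribute vertex is bivalent: the fresh variables of move (b) each sit in one factor and one identity, and the internal chain variables of \eqref{InviaI3} each occur in exactly two teridentities. By Definition \ref{BonDiag} each is therefore converted to an ordinary or hanging edge, so the explicated diagram contains no attribute vertices and is a bond diagram, consistent with Theorem \ref{ProJointoBond}. The only point demanding care is this valency bookkeeping --- verifying that move (b) indeed renders all its fresh variables bivalent (so that they do not spawn new branch points) and that the teridentity chain of \eqref{InviaI3} carries the valencies needed to reattach the $n$ former branches under both the free and bound readings. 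I expect no genuine obstacle here beyond keeping the free-versus-bound valency convention straight through the case analysis.
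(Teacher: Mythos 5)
Your proposal is correct and follows essentially the same route as the paper, which states the corollary without a formal proof but whose definition of bond explication (moves (a), (b), (c) exactly as you list them) and the subsequent valency bookkeeping in the proof of Lemma \ref{ter+prop} contain precisely the case analysis you carry out, including the point that a bound $n$-valent branch point yields $I_{n+1}$ collapsed to $I_n$ by the surrounding quantifier while a free one supplies its $n$-th argument as a loose end. Your write-up merely makes explicit the details the paper leaves to the reader.
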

\noindent It is particularly pronounced in the diagrams that attribute identifications (branch points) function like hidden factors in projoin reductions. Indeed, nothing substantive distinguishes them from predicate vertices in assembling the relation. Thus, one can see bond explication as analogous to adjoining ``ideal elements" ($I_n$) to uniformize factorizations in ring algebra.

\section{Complete reductions and ternarity}\label{tern}

So far we considered only general reductions, not complete reductions down to irreducibles.  In this section we will start looking at their structure of their complete reductions, but,
in the light of Peirce's reduction thesis, we consider only relations reducible to unaries, binaries and ternaries. They form a bond subalgebra of all relations, which is of interest even if irreducible higher arity relations exist on finite domains.

One can see from bonding diagrams that much work at putting a relation together is done by branch points. Hypostatic abstraction, for example, has a single branch point that holds together an otherwise loose collection of binaries. Bond explication removes branch points, but at a price of adding ternaries to the reduction. This suggests that ternaries play the role of relays in information exchange among the attributes, and their number quantifies the `complexity of relating attributes'. Mutual information between attributes has been studied in the context of database theory \cite{Mal}, and, more recently, as a measure of information integration in biological systems \cite{Teg16}. Another potential application is to designing conceptual schemas of databases friendly to natural language and human representation of knowledge and reasoning, as in Sowa's conceptual graphs that are based on bonding diagrams \cite{Cao10}. For more motivation and further discussion we refer to \cite{Kosh22}.

Thus, we will be interested in counting the number of ternaries in complete reductions of a relation, assuming that it is reducible to unaries, binaries and ternaries. Of course, this number may vary from one reduction to another, and what we really want is the {\it minimal} number over all possible reductions. 
\begin{definition}\label{ter+}
A bond is called \textbf{subternaric} when all of its factors have arity at most $3$. \textbf{Ternarity} of a relation, denoted $\ter$, is the minimal number of ternaries in its subternaric bond reductions, and $\infty$ if no such reductions exist. Non-redundant bond reductions with $\ter $ ternaries will be called \textbf{minimal bond reductions}.
\end{definition}
\noindent Ternarity of unaries and binaries is obviously $0$, and of ternaries is at most $1$. By the reducibility clause of Peirce's reduction thesis (Theorem \ref{InfDomBondRed}), all relations on infinite domains have subternaric bond reductions. Whether this holds on finite domains, i.e. whether $\ter (R)<\infty$ for all $R$, is an open problem, equivalent to projoin reducibility of all relations to binaries by Theorem \ref{ProJointoBond}. 

The following lemma is a direct consequence of the definitions.
\begin{lemma}\label{ter+prop} Ternarity is subadditive on relative products. For a projoin $R$ with factors $R_i$, free attributes indexed by $j$ with the $j$-th shared by $m_j$ factors, and bound attributes indexed by $k$ with the $k$-th shared by $n_k$ factors,
\beq\label{Proter+}
\ter(R)\leq\sum_i\ter(R_i)+\sum_j(m_j-1)+\sum_k(n_k-2).
\eeq
\end{lemma}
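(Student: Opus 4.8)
The plan is to build an explicit subternaric bond reduction of $R$ whose number of ternaries equals the right-hand side of \eqref{Proter+}; the bound on $\ter(R)$ then follows at once from Definition \ref{ter+}. If some factor has $\ter(R_i)=\infty$ the inequality is vacuous, so I assume throughout that every factor admits a subternaric bond reduction.

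First I would fix, for each factor $R_i$, a minimal subternaric bond reduction: by definition it is a bond of unaries, binaries and exactly $\ter(R_i)$ ternaries, so its bonding diagram has no branch points and carries one loose end for each attribute in the scheme of $R_i$ (renaming attributes as needed, which is permitted since bond explication is defined only up to renaming). Substituting these reductions into the projoin in place of the $R_i$ produces a single bonding diagram whose only remaining branch points are precisely the shared attributes of the original projoin: a free attribute $j$ glues the $m_j$ loose ends carrying it, one from each factor in whose scheme it occurs, together with the external stem, while a bound attribute $k$ glues the $n_k$ loose ends carrying it, with no stem. Because each factor's reduction is itself a bond, every such loose end is a simple hanging edge, so each branch point really does have the claimed structure.

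Next I would clear these branch points by bond explication (Corollary \ref{BondExpDiag}). By the valency convention of Definition \ref{ProGraph}, the free branch point $j$ has valency $m_j+1$ and the bound branch point $k$ has valency $n_k$, and a $v$-valent branch point is replaced, via \eqref{multiidn} and \eqref{InviaI3}, by a bond reduction of $I_v$, contributing $v-2$ teridentities and no other factors. Hence clearing $j$ costs $(m_j+1)-2=m_j-1$ teridentities and clearing $k$ costs $n_k-2$ teridentities. After all branch points are removed the diagram has none, so it is a genuine bond, all of whose factors are unaries, binaries, ternaries or teridentities; thus it is a subternaric bond reduction of $R$. Its ternaries are the $\sum_i\ter(R_i)$ coming from the factor reductions together with $\sum_j(m_j-1)+\sum_k(n_k-2)$ coming from the cleared branch points, which is exactly the right-hand side of \eqref{Proter+}.

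The routine part is checking that the substituted-and-explicated diagram still represents $R$ (so that associativity of the bonding causes no trouble) and that clearing a branch point preserves the free or bound status of its attribute, the free stem of $j$ surviving as a loose end while the copies at $k$ remain quantified. The one point that must be handled with care, and which I expect to be the only real subtlety, is the asymmetric valency bookkeeping: a free shared attribute contributes $m_j-1$ rather than $m_j-2$ precisely because the $+1$ in the valency convention accounts for its external stem, whereas a bound shared attribute has no stem and contributes $n_k-2$. Matching these two counts to the two sums in \eqref{Proter+} is the crux; everything else is a direct application of bond explication.
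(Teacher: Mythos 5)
Your proposal is correct and follows essentially the same route as the paper: substitute minimal subternaric reductions into the factors, then count the teridentities produced by bond explication of the branch points, using the valency convention ($m_j+1$ for free, $n_k$ for bound) to get $m_j-1$ and $n_k-2$ respectively. The paper's proof is just a terser version of this same argument, and your formula also subsumes the subadditivity claim as the special case $m_j=1$, $n_k=2$.
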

\begin{proof} 
Subadditivity is obvious because bonding two bond diagrams does not add predicate vertices or branch points. In the projoin graph of $R$, aside from $R_i$ vertices, we have free branch points of valencies $m_j+1$, and bound branch points of valencies $n_k$. According to \eqref{multiidn}-\eqref{InviaI3}, bond explication replaces the former with $m_j+1-2$ teridentities and the latter with $n_k-2$ teridentities, hence the sum in \eqref{Proter+}.
\end{proof}
Bounds on ternarity from above can be obtained from constructions of bond reductions. Recall that any projoin reduction can be explicated into bond reduction, and one can obtain projoin reductions by using keys (Theorem \ref{ProKeyRed}). However, only $1$- or $2$-keys produce subternaric reductions because for a $k$-key the factors have arity $k+1$.
\begin{theorem}\label{TerKeyRed} 
If an $n$-ary $R$ admits a $1$-key then $\ter(R)\leq n-2$, if it admits a $2$-key then $\ter(R)\leq 3n-4$, and if it already has a $2$-key then $\ter(R)\leq 3n-8$.
\end{theorem}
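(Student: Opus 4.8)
The plan is to produce an explicit reduction in each of the three cases and read off its ternarity from the subadditivity estimate \eqref{Proter+} of Lemma \ref{ter+prop}. The only facts I need beyond that lemma are the hypostatic abstraction reduction (Theorem \ref{ProKeyRed}) for the two ``admits'' claims, the ordinary key join (Theorem \ref{KeyRed}) for the ``already has'' claim, and the trivial bounds $\ter=0$ on binaries and $\ter\le1$ on ternaries.

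For the first claim I would apply Theorem \ref{ProKeyRed} with $k=1$ to write $R$ as the projoin $\exists t\,[R_1(t,x_1)\land\dots\land R_n(t,x_n)]$ of $n$ binaries. In the corresponding projoin graph every free attribute $x_i$ lies in a single factor ($m_j=1$), and the lone bound attribute $t$ is shared by all $n$ factors ($n_k=n$). Since the binary factors have $\ter=0$, \eqref{Proter+} gives $\ter(R)\le n-2$. For the second claim I would instead take $k=2$, giving a projoin of $n$ ternaries with two bound attributes $t_1,t_2$ each shared by all $n$ factors; now $\sum_i\ter(R_i)\le n$ and $\sum_k(n_k-2)=2(n-2)$, so \eqref{Proter+} yields $\ter(R)\le 3n-4$.

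The third claim is where the distinction between admitting and possessing a key pays off. Here no augmentation is needed: Theorem \ref{KeyRed} already decomposes $R$ as the parameter-free join $\join_{i\notin\K}\pi_{\K\cup\{i\}}R$ of only $n-2$ ternaries. The two key attributes are now \emph{free}, each shared by all $n-2$ factors ($m_j=n-2$), the non-key attributes sit in one factor apiece, and there are no bound attributes at all. Feeding this into \eqref{Proter+} with $\ter(R_i)\le1$ gives $\ter(R)\le(n-2)+2\bigl((n-2)-1\bigr)=3n-8$.

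I expect none of the three estimates to present a genuine obstacle; the whole argument is bookkeeping on top of Lemma \ref{ter+prop}. The one place to be careful is precisely the contrast in the last paragraph: when a $2$-key is merely admitted, the two parameters are new bound attributes of valency $n$ shared by $n$ factors, whereas when it is already present the two key attributes stay free, have valency $n-1$, and are shared by only $n-2$ factors. That difference---two fewer factors and one less teridentity per branch point---is exactly what separates $3n-4$ from $3n-8$, and the only real risk is mis-counting the free-versus-bound status or the sharing multiplicities when setting up \eqref{Proter+}.
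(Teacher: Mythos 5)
Your proposal is correct and follows essentially the same route as the paper: hypostatic abstraction (Theorem \ref{ProKeyRed}) for the two ``admits'' cases, the ordinary key join (Theorem \ref{KeyRed}) for the ``already has'' case, and in each case a direct count via the subadditivity estimate \eqref{Proter+} of Lemma \ref{ter+prop}. The bookkeeping of free versus bound attributes and of sharing multiplicities matches the paper's computation exactly.
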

\begin{proof} Hypostatic abstraction \eqref{ProKeyRedPred} with $k=1$ reduces $R$ to a projoin of $n$ binaries with a single shared attribute, which is bound and shared by all $n$ factors. Therefore, the first two terms in \eqref{Proter+} vanish and the last one produces $n-2$. In the case of $k=2$ we obtain a projoin of $n$ ternaries with two bound shared attributes, each by all $n$ factors. Therefore, the right hand side of \eqref{Proter+} reduces to $n+2(n-2)=3n-4$. When $R$ has a $2$-key, there are only $n-2$ factors, and the shared attributes are now free, so the count changes to $n-2+2(n-2-1)=3n-8$.
\end{proof}
\noindent It is interesting that there is no drop in ternarity bound when the relation already has a $1$-key as opposed to just admitting one. There is a change from projoin to join, but all the ternaries come from the branch point that has valency $n$ in both cases. Other special constructions also provide upper bounds. For example, it follows from \eqref{NotInProRedEq} that $\ter(\neg I_n)\leq 3n-6$ for $n\geq4$ and $|\D|\geq n-1$. Bounds from below are conceptually harder because we have to rule out all bonds with fewer ternaries as reductions. We will obtain such a bound for non-degenerate relations in the next section by exploiting graph-theoretic properties of bond diagrams.

Upon reflection, absence of reducible factors in a reduction is too weak a property. Not only can complete reductions have redundant predicates, as long as those are irreducible, but they may not be minimal. For example, four teridentities bonded in a square reduce $I_4$ to irreducibles, but hypostatic abstraction gives a minimal reduction with only two teridentities. 

On the other hand, minimal reductions can be incomplete for only trivial reasons. And they can be converted into complete reductions by a trimming procedure that is reflected in diagrams by {\it merging} of predicate vertices. In algebraic terms, when two factors share bound attribute(s) we replace them in the bond by their relative product. Merging cannot be used on a pair of ternaries with a single shared attribute, because it creates a quaternary, but in all other cases the bond remains subternaric. For example, the bond on Figure \ref{BondGraphs}\,b) can be merged into a single binary. The next lemma uses merging to produce complete minimal reductions, and gives additional support to discounting unaries and binaries in ternarity counts. 
\begin{lemma}\label{MonFac} Let $R$ be a subternarily reducible relation.

\noindent \textup{(i)} In any minimal reduction of $R$ any two factors share at most one attribute, i.e. the bonding diagram has no multiedges. 

\noindent \textup{(ii)} If $R$ does not have an unary Cartesian factor then its minimal reductions have no unaries at all.

\noindent \textup{(iii)} If $R$ does not have a binary Cartesian factor then there exist its minimal reductions with no binaries at all.
\end{lemma}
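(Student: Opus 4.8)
The common device for all three parts is \emph{merging}: when two factors share bound attributes, replacing them by their relative product gives an equivalent bond expression for $R$, since the shared attributes are internal to those two factors by the bond condition. I only need the scheme arithmetic: if $R_a$ on $\L_a$ and $R_b$ on $\L_b$ share a set $W$ of attributes, the merged factor has scheme $(\L_a\cup\L_b)\setminus W$ and arity $|\L_a|+|\L_b|-2|W|$. I also use repeatedly that a minimal reduction has exactly $\ter(R)$ ternaries, so any merging that strictly lowers the ternary count is impossible; that a disconnected component of a bonding diagram is a Cartesian factor (Corollary \ref{CartDiag}); and that a component carrying no free attribute (a closed subformula) is forbidden by non-redundancy. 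For (i), suppose two factors share $|W|\ge 2$ attributes, all of them bound. Since each factor has arity at most $3$, the cases are: two ternaries with $|W|=2$ (merge to a binary, losing two ternaries); a ternary and a binary with $|W|=2$ (merge to a unary, losing one ternary); and two binaries with $|W|=2$ or two ternaries with $|W|=3$ (identical schemes, merging to a closed nullary component). The first two contradict minimality of the ternary count, the last two contradict non-redundancy, so the bonding diagram has no multiedges.

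For (ii), suppose a minimal reduction contains a unary $U$ on attribute $u$. If $u$ is free, $U$ is a one-vertex component, hence a unary Cartesian factor of $R$, contradicting the hypothesis. If $u$ is bound, follow the maximal chain $U - B_1 - B_2 - \cdots$ of binaries linked by bound attributes emanating from $U$; by (i) each $B_i$ spends both of its attributes on the chain, so the chain is a simple path that must terminate. It can end only at (a) a ternary, in which case merging the whole path into it collapses that ternary to a binary and drops a ternary below $\ter(R)$, contradicting minimality; (b) a free loose end, in which case merging the path yields a disconnected unary, a unary Cartesian factor, again contradicting the hypothesis; or (c) another unary $V$, in which case the path $U - B_1 - \cdots - V$ carries no free attribute and is a closed component, contradicting non-redundancy. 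Every case is impossible, so no minimal reduction contains a unary.

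For (iii), I would first record the structural fact that a bond built from unaries and binaries alone has only path- and cycle-shaped components (branch points are disallowed and binaries have two attributes), so each connected component has at most two free attributes, i.e. arity at most $2$. Consequently any component of a minimal reduction containing no ternary is a single unary (an allowed unary Cartesian factor), a binary path (a binary Cartesian factor, excluded by hypothesis), or a closed cycle (excluded by non-redundancy). The plan is then to eliminate binaries by absorption: a binary shares a bound attribute with some neighbor (were both its attributes free it would itself be a binary Cartesian factor), and merging it into that neighbor leaves the neighbor's arity unchanged, since $|\L_c|+2-2=|\L_c|$, hence keeps the ternary count at $\ter(R)$ while removing one binary. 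Absorbing inward along each component from its ends toward a ternary terminates in a minimal reduction whose factors are only unaries and ternaries.

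The main obstacle lies in (ii) and in the termination analysis of (iii). For (ii) the delicate point is that the chain of merges, read back in the \emph{original} diagram, must always expose one of the three forbidden configurations; this is exactly where (i) (forcing every chained binary to be used up, so the path cannot branch or revisit a vertex) and non-redundancy carry the argument. For (iii) the subtlety is that an absorption must never create, even transiently, a double edge between the growing ternary and another factor, for resolving such a double edge would (by the merge arithmetic of (i)) drop a ternary below $\ter(R)$; but that very impossibility, guaranteed by minimality, is what certifies the absorptions can be ordered to absorb each binary into a ternary rather than chaining indefinitely through binaries. Thus the real content of (iii) is the bookkeeping that keeps the reduction subternaric, bondable, and of fixed ternary count at every step.
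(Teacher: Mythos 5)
Your proof is correct and takes essentially the same route as the paper's: the same merging device throughout, the same four multiedge configurations ruled out by non-redundancy or minimality in (i), the same chain-of-binaries induction terminating at a free attribute in (ii), and the same absorption of binaries into neighbors in (iii). Your extra bookkeeping about transient double edges in (iii) is sound but is handled implicitly in the paper by (i) together with minimality of the ternary count.
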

\vspace{-1em}
\begin{figure}[!ht]
\begin{centering}
a)\ \includegraphics[width=0.25\textwidth]{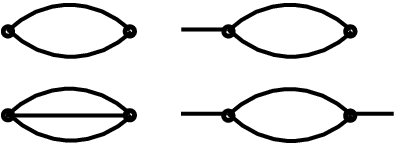} \hspace{0.1in} b)\ \includegraphics[width=0.21\textwidth]{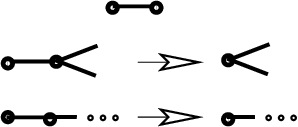} \hspace{0.1in} c)\ \includegraphics[width=0.30\textwidth]{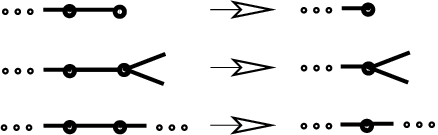} \par
\end{centering}
\caption{\label{Merging} Schematic bond diagrams, the dots stand for predicate vertices: a) subcubic multiedges; b) merging unaries; c) merging binaries.}
\end{figure}
\vspace{-1em}
\begin{proof} \textup{(i)} Possible multiedge configurations are shown on Figure \ref{Merging}\,a). Two of them have no loose ends and cannot occur because minimal reductions are non-redundant. In the other two merging would eliminate a ternary, so they cannot occur by minimality.

\textup{(ii)} If the unary shares its attribute with another factor there are three cases. It is another unary and they form a redundant component, which is ruled out. It is a ternary and merging will turn it into a binary, which is also ruled out. Finally, if it is a binary then merging will reduce it to an unary, and we can repeat the process, Figure \ref{Merging}\,b). By induction on the number of binaries, it must stop, and it can only stop when the unary's attribute is free (we hit a loose end). But then $R$ has an unary Cartesian factor, contrary to the assumption. 

\textup{(iii)} By (i), a binary can share at most one attribute with another factor, and when it does, merging reduces the number of binaries, Figure \ref{Merging}\,c). By induction, all binaries can be eliminated except for those with both attributes free, i.e. binary Cartesian factors.
\end{proof}
\noindent After applying the Lemma's procedure, the only reducible factors left, if any, are degenerate binary Cartesian factors. Factoring them into pairs of unary factors produces a complete minimal reduction.

\section{Ternarity and Peirce's reduction thesis}\label{ternPRT}

In this section we will use ternarity and graph theory to refine Peirce's reduction thesis on infinite domains, and show that its strengthened form fails dramatically on finite domains.
\begin{definition}\label{bondgraph}
The \textbf{bond graph} is obtained from the bond diagram by placing additional vertices at the loose ends and removing the labels.
\end{definition}
\noindent The bond graph is just a multigraph of graph theory \cite{Berge}. If the bond was subternaric then the multigraph will be {\it subcubic} (subtrivalent), i.e. have vertices of degree at most $3$. Such graphs are widely studied, particularly due to applications in structural chemistry and knot theory. By Lemma \ref{MonFac}, when the bond is a minimal reduction of a relation without unary Cartesian factors, the bond graph is a simple graph and its vertices of degree $1$, called {\it pendants} in graph theory, are in $1$-$1$ correspondence with the relation's attributes. 

To prove the next lemma, we will need a graph-theoretic formula originally due to Listing \cite{List}. Let $V$ be the number of vertices, $E$ the number of edges, $C$ the number of fundamental cycles, and $K$ the number of connected components, then $V-E+C-K=0$. This formula is often used in modern graph theory as the definition of $C$, called the cyclomatic number, which is then proved to be equal to the number of fundamental cycles \cite{Berge}. 
\begin{lemma}\label{III-I} If $I,I\!I,I\!I\!I$ denote the numbers of vertices of degrees $1, 2, 3$, respectively, in a subcubic multigraph then $I\!I\!I-I = 2(C-K)$. In particular, $I$ and $I\!I\!I$ have the same parity. If the graph is connected with at least $n$ pendants then $I\!I\!I\geq n-2$. 
\end{lemma}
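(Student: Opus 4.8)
The plan is to combine two elementary identities: the handshaking lemma and Listing's cyclomatic formula $V-E+C-K=0$ recorded just before the lemma. First I would note that in a subcubic multigraph with no isolated vertices every vertex has degree $1$, $2$, or $3$, so the vertex count is $V = I + I\!I + I\!I\!I$, while summing degrees over all vertices and applying the handshaking lemma gives $2E = I + 2\,I\!I + 3\,I\!I\!I$.

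Next I would rewrite Listing's formula as $C - K = E - V$ and substitute. Multiplying by $2$,
\[
2(C-K) = 2E - 2V = (I + 2\,I\!I + 3\,I\!I\!I) - 2(I + I\!I + I\!I\!I) = I\!I\!I - I,
\]
which is exactly the claimed identity. The parity statement is then immediate: $I\!I\!I - I = 2(C-K)$ is even, so $I\!I\!I$ and $I$ have the same parity.

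For the final inequality I would specialize to a connected graph, so $K = 1$, with at least $n$ pendants, i.e. $I \geq n$ (the pendants being precisely the degree-$1$ vertices). Rearranging the identity gives $I\!I\!I = I + 2(C-1)$, and since the cyclomatic number satisfies $C \geq 0$ we obtain $I\!I\!I \geq I - 2 \geq n - 2$, as required.

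The computation is routine once the two formulas are in hand; the only point needing care is the standing assumption that the multigraph has no degree-$0$ vertices, since an isolated vertex would contribute to $V$ but not to $2E$ and would break the identity. In the intended application this is automatic, because every vertex of a bond graph is either a predicate vertex of arity $\geq 1$ or a loose-end vertex of degree $1$, so minimum degree $1$ holds throughout. I would also remark that Listing's formula is valid for multigraphs (with loops and parallel edges) when $C$ is taken as $E - V + K$, so the multiedges that can appear in bond diagrams cause no difficulty.
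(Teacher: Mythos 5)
Your argument is correct and is essentially identical to the paper's proof: both combine $V=I+I\!I+I\!I\!I$ with the handshaking identity $2E=I+2I\!I+3I\!I\!I$ and Listing's formula to get $I\!I\!I-I=2(C-K)$, then set $K=1$ and use $C\geq0$, $I\geq n$ for the final inequality. Your added remark about excluding isolated (degree-$0$) vertices is a sensible precaution that the paper leaves implicit.
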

\begin{proof} In a subcubic multigraph we have $I+I\!I+I\!I\!I=V$. And, by the handshaking theorem, the sum of all vertex valencies is twice the number of its edges, so $I+2I\!I+3I\!I\!I=2E$. Therefore, $V-E=\frac12(I-I\!I\!I)$. It remains to multiply both sides by $2$ and note that $V-E=-(C-K)$ by the Listing's formula. In a connected multigraph $K=1$, so $I\!I\!I=I-2+2C\geq n-2$.
\end{proof}
\noindent It is instructive to give a direct intuitive argument for the case $n=3$. Since the multigraph is connected there exist paths going from two of the pendants to the third. They must meet at some vertex, and then proceed jointly to the destination (of course, they may meet and diverge several times). That meeting vertex must have degree $3$.
\begin{corollary}\label{AdTerPar} 
If $R$ does not have an unary Cartesian factor then its ternarity and arity have the same parity.
\end{corollary}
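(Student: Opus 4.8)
The plan is to translate the claimed numerical parity into the graph-theoretic parity already recorded in Lemma \ref{III-I}, applied to the bond graph of a carefully chosen minimal reduction. Throughout I work under the (implicit) assumption that $R$ is subternarily reducible, since otherwise $\ter(R)=\infty$ carries no parity. First I would use Lemma \ref{MonFac}\,(ii): because $R$ has no unary Cartesian factor, it admits a minimal reduction all of whose factors are binaries and ternaries, with no unaries at all. Fixing such a reduction, I pass to its bond graph (Definition \ref{bondgraph}). Since every factor has arity at most $3$, this is a subcubic multigraph, and by Lemma \ref{MonFac}\,(i) it is in fact simple (no multiedges).

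The heart of the argument is then a bookkeeping step: reading off the degree sequence in terms of the invariants of $R$. The vertices added at the loose ends are exactly the degree-$1$ vertices, and by the remark following Definition \ref{bondgraph} they are in bijection with the attributes of $R$; hence $I=n$, the arity. On the other hand, because the reduction contains no unaries and a bond diagram has no branch points, each predicate vertex is either a binary (degree $2$) or a ternary (degree $3$), and in the simple graph each ternary genuinely has three distinct neighbours. Thus the degree-$3$ vertices are precisely the ternary factors, and since the reduction is minimal their number is $\ter(R)$, giving $I\!I\!I=\ter(R)$.

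Finally I would apply Lemma \ref{III-I}, which gives $I\!I\!I-I=2(C-K)$ and in particular that $I$ and $I\!I\!I$ have the same parity. Substituting $I=n$ and $I\!I\!I=\ter(R)$ yields $\ter(R)\equiv n\pmod 2$, as claimed. The only delicate point is establishing the two identifications $I=n$ and $I\!I\!I=\ter(R)$ at the same time, and this is exactly where the hypothesis on unary Cartesian factors is indispensable: it is what Lemma \ref{MonFac}\,(ii) needs to produce a minimal reduction free of unaries. A stray unary would contribute a degree-$1$ vertex not corresponding to any attribute, breaking the clean equality $I=n$; once that equality is secured, the conclusion is an immediate substitution into the handshaking-plus-cyclomatic identity of Lemma \ref{III-I}.
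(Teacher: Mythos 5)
Your proposal is correct and follows the paper's own argument essentially verbatim: invoke Lemma \ref{MonFac}\,(ii) to obtain a minimal reduction without unaries, identify $I$ with the arity (pendants come only from loose ends) and $I\!I\!I$ with $\ter(R)$ in the bond graph, and conclude by the parity statement of Lemma \ref{III-I}. The extra bookkeeping you supply (simplicity of the graph via Lemma \ref{MonFac}\,(i), binaries contributing only degree-$2$ vertices) is a harmless elaboration of what the paper leaves implicit.
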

\begin{proof} We have $\ter (R)=I\!I\!I$ in the bond graph of a minimal bond reduction of $R$. By Lemma \ref{MonFac}\,(ii), it contains no unaries, so the only pendants come from loose ends, and $I$ is the arity of $R$. The result now follows directly from Lemma \ref{III-I}. 
\end{proof}
\noindent The no-unary-factor condition cannot be dropped, the quaternary $P(u)\land Q(x,y,z)$ with non-degenerate $Q$ has arity $4$ but ternarity $3$.

Peirce's reduction thesis is essentially equivalent to $1\leq\ter(R)<\infty$ for non-degenerate $R$ with $n\geq3$. The next theorem gives us the exact number, on infinite domains, and the promised irreducibility of non-degenerate ternaries, on any domains.
\begin{theorem}\label{TerNonDeg} 
Ternarity of any non-degenerate $n$-ary $R$ with $n\geq2$ satisfies $\ter(R)\geq n-2$. In particular, any non-degenerate ternary is bond irreducible. If, moreover, $|R|\leq|\D|$ then $\ter(R)=n-2$. In particular, $\ter(R)=n-2$ for all relations on infinite domains.
\end{theorem}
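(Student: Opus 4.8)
The plan is to extract both inequalities from the bond graph of a minimal reduction: the lower bound from the graph-theoretic Lemma \ref{III-I}, and the matching upper bound from the key construction of Theorem \ref{TerKeyRed}. For the lower bound $\ter(R)\geq n-2$ I would first dispose of the case $\ter(R)=\infty$, where there is nothing to prove, and otherwise fix a minimal bond reduction of $R$ and pass to its bond graph $G$. Since $R$ is non-degenerate and a minimal reduction is non-redundant, Corollary \ref{CartDiag} forces $G$ to be connected. As $n\geq2$ and $R$ is non-degenerate, $R$ has no unary Cartesian factor, so Lemma \ref{MonFac}(ii) lets me assume the reduction has no unary factors at all; then every predicate vertex has degree equal to its arity, namely $2$ or $3$. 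Hence the only degree-$1$ vertices of $G$ are the pendants placed at the loose ends, and since in a bond each of the $n$ attributes of $R$ is free in exactly one factor, there are exactly $n$ of them. Subternaricity makes $G$ subcubic, so I would invoke Lemma \ref{III-I} with $I=n$ pendants to conclude $\ter(R)=I\!I\!I\geq n-2$.

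The ternary case $n=3$ then reads $\ter(R)\geq1$, and I would note that a non-degenerate ternary therefore cannot be written as a bond of unaries and binaries: such a decomposition is subternaric with no ternaries and would force $\ter(R)=0$, a contradiction. This yields the bond irreducibility of non-degenerate ternaries, which is the nontrivial part of the irreducibility clause of Peirce's reduction thesis (Theorem \ref{InfDomBondRed}).

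For the matching upper bound, suppose $|R|\leq|\D|$. By Lemma \ref{KeyAdmCrit} this cardinality bound is exactly the statement that $R$ admits a $1$-key, and Theorem \ref{TerKeyRed} then gives $\ter(R)\leq n-2$; combined with the lower bound this yields $\ter(R)=n-2$. Finally, on an infinite domain cardinal arithmetic gives $|R|\leq|\D|^n=|\D|$ for every $n$-ary relation, so the hypothesis $|R|\leq|\D|$ is automatic and $\ter(R)=n-2$ holds throughout, for the non-degenerate relations the argument addresses (degenerate ones such as $U_n$ can of course have strictly smaller ternarity).

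Every ingredient is drawn from earlier results, so the one delicate point — and what I expect to be the main obstacle — is the bookkeeping inside the bond graph: verifying that the number of ternaries is exactly the trivalent-vertex count $I\!I\!I$ and that the arity is exactly the pendant count $I$. This is precisely where non-degeneracy does the work. It secures connectedness through Corollary \ref{CartDiag}, without which Lemma \ref{III-I} gives no useful bound, and, through Lemma \ref{MonFac}(ii), it eliminates stray unary factors that would otherwise contribute spurious pendants and break the identity $I=n$.
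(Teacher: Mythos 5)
Your proposal is correct and follows essentially the same route as the paper: connectedness of the bond graph from non-degeneracy, the pendant count from the $n$ free attributes, Lemma \ref{III-I} for the lower bound, and Lemma \ref{KeyAdmCrit} plus Theorem \ref{TerKeyRed} for the matching upper bound. The only cosmetic difference is that you normalize to a minimal reduction and invoke Lemma \ref{MonFac}(ii) to get $I=n$ exactly, whereas the paper works with an arbitrary subternaric bond decomposition and only needs $I\geq n$, since degree-$3$ vertices are exactly the ternary factors in any case.
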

\begin{proof} Consider a bond decomposition of $R$. Since $R$ is non-degenerate its bond multigraph is connected, and has at least $n$ pendants coming from the loose ends, the free variables. Therefore, $I\geq n$. The lower bound $\ter(R)\geq n-2$ now follows directly from Lemma \ref{III-I}. For $n=3$ this means that any bond decomposition of a non-degenerate ternary must contain a ternary, i.e. such ternaries are bond irreducible.

If $|R|\leq|\D|$ then, by Lemma \ref{KeyAdmCrit}, it admits a $1$-key, and, by Theorem \ref{TerKeyRed}, $\ter(R)\leq n-2$, hence $\ter(R)=n-2$. On infinite domains $|R|\leq|\D|$ holds for all relations.
\end{proof}
\noindent The next example shows that the inequality in the lower bound on $\ter$ can be strict on finite domains.
\begin{example}[\textbf{Herzberger's quaternary}] Consider the quaternary $H$ on $\D=\{\a,\b,\g\}$ introduced by Herzberger in \cite{Herz} and given by the table below.
\medskip

$H$ := \begin{tabular}{|C|C|C|C|}
\hline
 \a & \b & \b & \a \\  \hline
 \b & \a & \a & \b \\ \hline
 \g & \b & \g & \b \\ \hline
 \b & \g & \b & \g \\  \hline
\end{tabular}\ \ \ \ \ 
$H^{1,2,3}$ = \begin{tabular}{|C|C|C|}
\hline
 \a & \b & \b\\  \hline
 \b & \a & \a \\ \hline
 \g & \b & \g \\ \hline
 \b & \g & \b \\ \hline
 \end{tabular}\ \ 
$H^{1,2,4}$ = \begin{tabular}{|C|C|C|}
\hline
 \a & \b & \a\\  \hline
 \b & \a & \b \\ \hline
 \g & \b & \b \\ \hline
 \b & \g & \g \\ \hline
 \end{tabular}
 \medskip
 
\noindent One can check by cases that $H$ is non-degenerate. We have $|H|=4>3=|\D|$, and Herzberger shows by combinatorial search that $H$ is, indeed, not a relative product of two ternaries (actually, he only shows that for one partition of  attributes, but the argument works analogously for others). Therefore, $\ter(H)>2$. 

However, $H$ is not a counterexample to reducibility. One can see by inspection that it has a $2$-key (in fact, any two of its columns are a $2$-key). Therefore, by Theorem \ref{KeyRedRel}, it is a join of two ternaries, e.g. $H=H^{1,2,3}\Join H^{1,2,4}$ if we pick the first two columns as the $2$-key. Bond explication \eqref{multiidn} of the two shared attributes converts this join into a bond of {\it four} ternaries, its projections $H^{1,2,3}$, $H^{1,2,4}$, and two teridentities. Since $\ter(H)$ must be even by Lemma \ref{III-I} and $\ter(H)>2$ we conclude that $\ter(H)=4$.
\end{example}
Herzberger's observation can be strengthened by relating ternarity to the number of parameters in projoin reductions and applying Theorem \ref{ProJoinRedLim}. It turns out that linear bounds on ternarity in terms of arity, as in Theorem \ref{TerKeyRed}, are not typical for general relations. However, we cannot infer existence of relations of infinite ternarity, i.e. of irreducible $n$-aries with $n\geq4$, and hence refute Peirce's original thesis.
\begin{theorem}\label{TerRedLim} The share of $n$-ary relations with $n\geq4$ and $\ter(R)\leq m$ among all such relations on a domain $\D$ is $<1$ for $|\D|>\binom{\frac{3m+n}{2}}{n-1}$, and asymptotically vanishes when $|\D|\to\infty$. In particular, there exist $n$-ary relations of arbitrarily high ternarity. 
\end{theorem}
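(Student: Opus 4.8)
The plan is to derive this from the projoin counting bound of Theorem~\ref{ProJoinRedLim} by showing that bounded ternarity forces projoin reducibility with a bounded number of parameters. Precisely, writing $k_{\max}:=\tfrac{3m-n}{2}$ (an integer, since by Corollary~\ref{AdTerPar} only the case $m\equiv n\pmod 2$ is relevant), I would establish the inclusion
$$
\{\,R\subseteq\D^\S : \ter(R)\le m\,\}\ \subseteq\ \{\,R : R \text{ is projoin reducible with } k_{\max}\text{ parameters}\,\}.
$$
Since $n+k_{\max}=\tfrac{3m+n}{2}$, applying Theorem~\ref{ProJoinRedLim} with $k=k_{\max}$ then bounds the share on the left by $2^{-\left(|\D|-\binom{\frac{3m+n}{2}}{n-1}\right)|\D|^{n-1}}$, which is $<1$ once $|\D|>\binom{\frac{3m+n}{2}}{n-1}$ and tends to $0$ as $|\D|\to\infty$.

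The heart of the argument is the parameter count for a single non-degenerate $R$ with $\ter(R)=t\le m$. By Lemma~\ref{MonFac}(ii)--(iii), since $R$ has no Cartesian factors, it admits a minimal bond reduction with neither unaries nor binaries; being subternaric, this reduction consists of exactly $t$ ternaries. Its bond graph is a connected subcubic multigraph (connected because $R$ is non-degenerate, by Corollary~\ref{CartDiag}) with $t$ vertices of degree $3$, the $n$ loose ends appearing as pendants of degree $1$, and no vertex of degree $2$. Handshaking gives $2E=3t+n$; removing the $n$ pendant edges leaves $E-n=\tfrac{3t-n}{2}$ internal edges, which are exactly the bound attributes, i.e.\ the projoin parameters. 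Hence $R$ is projoin reducible with $k=\tfrac{3t-n}{2}$ parameters into ternaries, whose arity $3$ is $<n$ for $n\ge4$; and $0\le k\le k_{\max}$ because $n-2\le t\le m$ by Theorem~\ref{TerNonDeg}.

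To reach exactly $k_{\max}$ parameters, and to dispose of degenerate relations at once, I would use a padding step: any projoin with $k'$ parameters remains a projoin with $k'+1$ parameters after conjoining a dummy factor $U_1(s)$ on a fresh parameter $s$ (as $\exists s\,U_1(s)$ holds), so every $k'\le k_{\max}$ can be raised to $k_{\max}$. A degenerate relation is a Cartesian product of factors of arity $<n$, hence a projoin with $0$ parameters, which pads up in the same way regardless of its ternarity. This completes the inclusion for all $R$ with $\ter(R)\le m$, and the share bound follows as above. Finally, ``arbitrarily high ternarity'' is immediate: for each fixed $m$ the share of relations with $\ter(R)\le m$ is $<1$ on any domain with $|\D|>\binom{\frac{3m+n}{2}}{n-1}$, so such a domain must carry an $n$-ary relation with $\ter(R)>m$.

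I expect the main obstacle to be the clean extraction of an all-ternary minimal reduction of a non-degenerate relation, so that the handshaking count has no stray degree-$2$ vertices to spoil the equality $k=\tfrac{3t-n}{2}$; this is exactly what the simultaneous use of Lemma~\ref{MonFac}(ii) and (iii) secures, together with the connectedness supplied by non-degeneracy. Everything else---the handshaking arithmetic, the padding, and the asymptotic estimate imported from Theorem~\ref{ProJoinRedLim}---is routine.
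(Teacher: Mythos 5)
Your proposal is correct and follows essentially the same route as the paper: extract a purely ternary minimal reduction via Lemma~\ref{MonFac}, use the handshaking count on the connected subcubic bond graph to get $k=\tfrac{3\,\ter(R)-n}{2}\leq\tfrac{3m-n}{2}$ projoin parameters, and then import the share bound from Theorem~\ref{ProJoinRedLim}. Your padding step and explicit treatment of degenerate relations are slightly more careful than the paper's terse "$n+k\leq\tfrac{3m+n}{2}$, so the inequality is satisfied" and "degenerate relations are projoin reducible with $0$ parameters, let alone $k$", but the substance is identical.
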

\begin{proof} Suppose $R$ is non-degenerate with $\ter(R)\leq m$. By Lemma \ref{MonFac}, there is a purely ternary minimal reduction of it. In its bond graph $I=n$, $I\!I\!I=\ter(R)$, and $2E=I+3I\!I\!I$ by the handshaking theorem. Of the edges, $n$ are incident to pendants and correspond to free variables, while $k:=E-n$ correspond to bond parameters. Therefore, our minimal reduction is a projoin reduction with $k=\frac{3\,\ter(R)-n}{2}\leq\frac{3m-n}{2}$ parameters. 

By Theorem \ref{ProJoinRedLim}, the share of relations projoin reducible with $k$ parameters is $<1$ when $|\D|>\binom{n+k}{n-1}$ and it goes to $0$ when $|\D|\to\infty$. Since $n+k\leq\frac{3m+n}{2}$ this inequality is satisfied for our $k$. Degenerate relations are projoin reducible with even $0$ parameters, let alone $k$, so there must be non-degenerate relations with $\ter>m$ on our domain. Moreover, their share approaches $1$ when $|\D|\to\infty$.
\end{proof}
\noindent The estimate we used in the theorem is very rough. Indeed, $k$ is not the number of parameters in just any projoin reduction, but in a complete reduction down to ternaries. One could merge ternaries, as long as the merged factors still have arity $<n$, and reduce that number. To get a more accurate estimate one can count the number of subcubic graphs with $n$ pendants, $m$ cubic and no degree $2$ vertices, and bound the number of relations that have them as their bond graphs.

Finally, Peirce's reduction thesis on infinite domains and bond explication show that of all ternaries only one is needed in reductions -- teridentity. It is to unaries, binaries and teridentities that we should aim to reduce all relations. This suggests our next definition.
\begin{definition}\label{I3ter+}
$\boldsymbol{I_3}$\textbf{-ternarity} of a relation, denoted $\ter_{\!I_3}$, is the minimal number of teridentities in its subternaric bond reductions where the only ternaries are teridentities, and $\infty$ if no such reductions exist. 
\end{definition}
\noindent Clearly, $\ter_{\!I_3}\leq\ter$ and $\ter_{\!I_3}=\ter$ on infinite domains because any ternary decomposes into binaries and teridentities by hypostatic abstraction. The next example shows that on finite domains, again, the inequality can be strict.
\begin{example} Suppose a non-degenerate ternary has a subternaric decomposition with a single teridentity. Since it has no unary factors unaries can be eliminated, and we are left with the teridentity with up to three chains of binaries attached to it in the graph. A chain of binaries can be merged into a single one by relative products, and represent our ternary as a teridentity directly bonded with three (or fewer) binaries. Turning the teridentity into a branch point we obtain a projoin of binaries with a single parameter. 

However, we showed in Example \ref{NotI3ProIrred} that $\neg I_3$ on a domain with $|\D|=2,3$ is not projoin reducible with one parameter. Therefore, while $\ter(\neg I_3)=1$ trivially, $\ter_{\!I_3}(\neg I_3)>1$. Since $\ter_{\!I_3}(\neg I_3)$ must be odd we can conclude that $\ter_{\!I_3}(\neg I_3)\geq3$, i.e. it takes at least $3$ teridentities to bond $\neg I_3$ on small domains, if it is possible at all.
\end{example}

\section{Conclusions and open problems}

We studied reduction of relations to relations of smaller arity under three relational operations: join, projoin and bond. All three can be expressed by conjunctions and existential quantification on predicates, and are motivated by algebraic analogies and practical applications in the database theory. Aside from unifying and extending known reduction results and constructions, we described the sets of irreducible relations and the structure of complete reductions to them. 
We also clarified the relationship between projoin and bond reducibility, and the import of Peirce's reduction thesis. Finally, we introduced the notion of ternarity that, intuitively, measures complexity of `relating' in a relation, and used it to sharpen reducibility results.

Aside from concrete results, a major takeaway from this work is the striking gap between reduction behavior on finite and infinite domains. As far as we know, the only author to notice the phenomenon before was Herzberger \cite{Herz}. We showed that the gap gets wider as the size of the domain grows: the share of irreducible relations with bounded number of parameters (Theorem \ref{ProJoinRedLim}) or with bounded ternarity (Theorem \ref{TerRedLim}), grows with it, even though it is $0$ at $\infty$. The root cause of this discrepancy is the equality $|\D|=|\D|^2$ for infinite cardinalities, which is equivalent to the axiom of choice. 

This raises a big question: to what extent does Peirce's reduction thesis hold on finite domains? While non-degenerate ternaries are still irreducible there (even by stronger means than bonds and projoins \cite{CorPos04}), reduction is obstructed by the lack of enough domain elements for classical constructions. 
\begin{quote} {\bf Problem 1:} Are there irreducible $n$-ary relations with $n\geq4$? 
\end{quote} 
Such relations would have to have a lot of tuples, $|R|>|\D|^{n-2}$. Otherwise, they will have a $k$-key with $k\leq n-2$ and hypostatic abstraction will reduce them (Theorem \ref{ProKeyRed}). Counting arguments we used would not settle the question alone, because general projoins and bonds do not have a finite combinatorial description like
Cartesian products, joins, projoins with bounded number of parameters, or bonds with bounded ternarity. On the other hand, general tests of irreducibility, like the ones for join irreducibility in Theorem \ref{JoinIrredTests}, also seem to be elusive. A promising approach is provided by the clone theory, where one can dualize the problem into one about functional clones via the Pol-Inv Galois connection. Projoin bases of small arity for maximal  sub-co-clones of the co-clone of all relations on $2$-element domains are constructed in \cite{BRSV05}. If one could construct bases containing only unaries, binaries and ternaries for the co-clone of all relations on any finite domain that would resolve the question negatively.

While we suspect a negative answer to the first problem, it is more likely to be affirmative for the next one. Although non-degenerate ternaries are trivially `reducible' to themselves on any domains, there is a non-trivial reducibility question about them. To resolve it negatively, one would need a basis of the co-clone of all relations containing only unaries and binaries. 
\begin{quote} {\bf Problem 2:} Are there ternary relations indecomposable into bonds of unaries, binaries and teridentities (equivalently, projoin irreducible to unaries and binaries)? 
\end{quote} 
We already saw that the strongest form of the reduction thesis, that gives ternarity of non-degenerate $n$-ary relations as $n-2$, fails on finite domains. Even if reductions are always possible their complexity must be higher than that of their infinite counterparts. In particular, there can be no bound on ternarity in terms of arity alone. However, since there are finitely many $n$-aric relations of finite ternarity on a finite domain ternarity must attain a maximum on them. 
\begin{quote} {\bf Problem 3:} Find sharp upper bounds on ternarity in terms of arity and the size of the domain. 
\end{quote} 
We did not address the question of uniqueness, but a relation can have purely ternaric minimal reductions whose bond graphs are not even isomorphic. Indeed, bonding teridentities on any cubic graph with $n$ hanging edges produces $n$-identity, and it is easy to construct non-isomorphic tree graphs with equal numbers of cubic vertices. Perhaps, this diversity is due to overabundance of symmetry in $I_n$.  
\begin{quote} {\bf Problem 4:} Are the bond graphs of purely ternaric minimal reductions unique for `generic' non-degenerate relations?
\end{quote} 
One can think of minimal reductions as revealing the structure of information processing within a relation, which suggests an affirmative answer. Attributes of a relation generalize inputs and outputs of a function, and functions are commonly interpreted as information processors \cite[1.5.4]{Lau}. In \cite{Mal} a measure of information exchange among relation's attributes is introduced and studied, similar measures are studied in computational biology \cite{Teg16}. The intuition of ternarity as `complexity of relating attributes' suggests a connection.
\begin{quote} {\bf Problem 5:} Is there an information-theoretic interpretation of ternarity, e.g. bounds on measures of information exchange in terms of it? 
\end{quote}
To summarize, logical factorization of relations poses many interesting challenges at the intersection of mathematical logic, combinatorics, graph theory and data science.

{\footnotesize

}


\begin{thebibliography}{100}  

\bibitem{AHV95} S. Abiteboul, R. Hull, V. Vianu, {\it Foundations of databases}, Addison-Wesley, New York, 1995.

\bibitem{ABU79} A. Aho, C. Beeri, J. Ullman, Theory of joins in relational databases, ACM Transactions on Database Systems, 4 (1979) no. 3, 297-314. 

\bibitem{BeeFag81} C. Beeri, R. Fagin, D. Maier, A. Mendelzon, J. Ullman, Properties of acyclic database schemes, in {\it  Proceedings of the 13th annual ACM Symposium on Theory of Computing, Milwaukee, May 11-13}, 1981, 355-362.

\bibitem{Berge} C. Berge, {\it The theory of graphs}, Dover, Mineola, NY, 2001.

\bibitem{BRSV05} E. B\"ohler, S. Reith, H. Schnoor, H. Vollmer, Bases for Boolean co-clones, Information Processing Letters, 96 (2005) no. 2, 59-66. 

\bibitem{Born08} F. B\"orner, Basics of Galois connections, in {\it Complexity of Constraints. Lecture Notes in Computer Science, v. 5250, Springer-Verlag, Berlin}, 2008, 38-67.

\bibitem{Brun91} J. Brunning, C. S. Peirce's relative product, Modern Logic, 2 (1991) no. 1, 33-49. 

\bibitem{Bur91} R. Burch, {\it A Peircean reduction thesis: the foundations of topological logic}, Texas Tech University Press, Lubbock, TX, 1991.  

\bibitem{Bur97} R. Burch, Peirce's reduction thesis, in {\it Studies in the Logic of {C}harles {S}anders {P}eirce, ch. 16, Indiana University Press}, 1997, 234-252.

\bibitem{Buss} J. van den Bussche, Applications of {A}lfred {T}arski's ideas in database theory, in {\it Computer Science Logic. Lecture Notes in Computer Science, v. 2142, Springer, Berlin}, 2001, 20-37.

\bibitem{Cao10} T. Cao, Conceptual graphs and fuzzy Logic, Springer, Berlin, 2010.

\bibitem{Codd70} E. Codd, A relational model for large shared data banks, Communications of the ACM, 13 (1979) no. 6, 377-387. 

\bibitem{CorDau} J. Hereth Correia, F. Dau, Two instances of {P}eirce’s reduction thesis, in {\it Formal Concept Analysis. Lecture Notes in Computer Science, v. 3874, Springer, Berlin}, 2006, 106-118.

\bibitem{CorPos04} J. Hereth Correia, R. P\"oschel, The power of {P}eircean {A}lgebraic {L}ogic ({PAL}), in {\it Concept Lattices, Second International Conference on Formal Concept Analysis, Springer, Berlin}, 2004, 337-351.

\bibitem{DunMik} I. D\"untsch, Sz. Mikulas, Cylindric structures and dependencies in relational databases, Theoretical Computer Science, 269 (2001) no. 1-2, 451-468.

\bibitem{Fag77} R. Fagin, Multivalued dependencies and a new normal form for relational databases, ACM Transactions on Database Systems, 2 (1977) no. 3, 262-278. 

\bibitem{Herz} H. Herzberger, Peirce’s remarkable theorem, in {\it Pragmatism and Purpose: Essays Presented to Thomas A. Goudge, University of Toronto Press}, 1981, 41-58.

\bibitem{ImLip} T. Imielinski, W. Lipski, The relational model of data and cylindric algebras, Journal of Computer and System Sciences, 28 (1984) no. 1, 80-102. 

\bibitem{Jech} T. Jech, {\it The axiom of choice}, Elsevier, New York, 1973.

\bibitem{JLNZ13} P. Jonsson, V. Lagerkvist, G. Nordh, B. Zanuttini, Complexity of SAT problems, clone theory and the exponential time hypothesis, in {\it Proceedings of the 24th Annual ACM-SIAM Symposium on Discrete Algorithms}, 2013,  1264-1277.

\bibitem{Kosh22} S. Koshkin, Is Peirce's reduction thesis gerrymandered? Transactions of the Charles S. Peirce Society, 58 (2022) no. 4, 271-300.

\bibitem{Lag17} V. Lagerkvist, M. Wahlstr\"om, The power of primitive positive definitions with polynomially many variables, Journal of Logic and Computation, 27 (2017) no. 5, 1465-1488. 

\bibitem{Lau} D. Lau, Function algebras on finite sets, Springer-Verlag, Berlin, 2006.

\bibitem{Lee83} T. Lee, An algebraic theory of relational databases, Bell System Technical Journal, 62 (1983) no. 10, 3159-3204. 

\bibitem{Lip} S. Lipschutz, {\it Set theory and related topics}, McGraw Hill, New York, 1998.

\bibitem{List} J. Listing, Der Census r\"aumlicher Complexe, der Verallgemeinerung des Euler'schen Satzes von den Polyedern, Abhandlungen der K\"oniglichen Gesellschaft der Wissenschaften in G\"ottingen, 10 (1862) 97-182.

\bibitem{Low} L. L\"owenheim, \"Uber M\"oglichkeiten im Relativkalk\"ul, Mathematische Annalen 76 (1915) 447-470. English translation: On possibilities in the calculus of relatives, in {\it From Frege to G\"odel: a source book in mathematical logic 1879-1931, Harvard University Press, Cambridge, MS}, 1967, 228-251.

\bibitem{Maier} D. Maier, {\it The theory of relational databases}, Computer Science Press, Rockville, MD, 1983.

\bibitem{Mal} F. Malvestuto, Statistical treatment of the information content of a database, Information Systems, 11 (1986) no. 3,  211-223.

\bibitem{MenMai} A. Mendelzon, D. Maier, Generalized mutual dependencies and the decomposition of database relations, in {\it Fifth International Conference on Very Large Data Bases, IEEE}, 1979, 75-82.

\bibitem{Miet11} P. Miettinen, Boolean tensor factorizations, in {\it 11th International Conference on Data Mining, IEEE, New York}, 2011, 447-456.

\bibitem{Miet21} P. Miettinen, Recent developments in Boolean matrix factorization, in {\it Proceedings of the Twenty-Ninth International Joint Conference on Artificial Intelligence, Yokohama}, 2021, 4922-4928.

\bibitem{PosKal} D. P\"oschel, L. Kalu\u{z}nin, {\it Funktionen- und Relationenalgebren}, Mathematische Monographien 15, Deutscher Verlag der Wissenschaften, Berlin, 1979.

\bibitem{Riss77} J. Rissanen, Independent components of relations, ACM Transactions on Database Systems, 2 (1977) no. 4, 317-325. 

\bibitem{Sch78} T. Schaefer, The complexity of satisfiability problems, in {\it Conference Record of the 10th Annual ACM Symposium on Theory of Computing, San Diego}, 1978, 216-226.

\bibitem{Teg16} M. Tegmark, Improved measures of integrated information, PLoS Computational Biology, 12(11) (2016) e1005123.

\bibitem{Tring22} S. Tringali, An abstract factorization theorem and some applications, Journal of Algebra, 602 (2022) 352-380. 

\bibitem{YanPap} M. Yannakakis, C. Papadimitriou, Algebraic dependencies, Journal of Computer and System Sciences, 25 (1982) no.1, 2-41.


\end{thebibliography}
\end{document}